\DeclareRobustCommand{\VAN}[3]{#2}
\newcommand{\TJM}[4]{
\bigl(\begin{smallmatrix}
  #1 & #2 \\
  #3 & #4
\end{smallmatrix} \bigr)}
\newtheorem{Thm}[subsubsection]{Theorem}
\newtheorem{Lem}[subsubsection]{Lemma}
\newtheorem{Prop}[subsubsection]{Proposition}
\newtheorem{Cor}[subsubsection]{Corollary}
\newtheorem{mainThm}{Theorem}
\newtheorem*{IntroThm}{Theorem}
\theoremstyle{definition}
\newtheorem{Def}[subsubsection]{Definition}
\newtheorem{Hyp}[subsubsection]{Hypothesis}
\newtheorem{Rem}[subsubsection]{Remark}
\numberwithin{equation}{subsection}
\DeclareMathOperator{\Hom}{\mathscr{H}\mathrm{\kern -3pt {\calligra\large om}}\,}
\newcommand{\zp}{\mathbb{Z}_p}
\newcommand{\qbar}{\overline{\mathbb{Q}}}
\newcommand{\qlbar}{\overline{\mathbb{Q}}_{\ell}}
\newcommand{\qpbar}{\overline{\mathbb{Q}}_{p}}
\newcommand{\qp}{\mathbb{Q}_p}
\newcommand{\art}{\operatorname{Art}}
\newcommand{\nilp}{\operatorname{Nilp}}
\newcommand{\shginf}{\operatorname{Sh}_{G, K_p}}
\newcommand{\shg}{\operatorname{Sh}_{G, K^pK_p}}
\newcommand{\shgsp}{\operatorname{Sh}_{\mathcal{G}_{V}, \mathcal{K}^p \mathcal{K}_p}}
\newcommand{\shgspinf}{\operatorname{Sh}_{\mathcal{G}_{V},\mathcal{K}_p}}
\newcommand{\shginfov}{\operatorname{Sh}_{G, K_p,\ovfp}}
\newcommand{\shgov}{\operatorname{Sh}_{G, K^pK_p,\ovfp}}
\newcommand{\shgord}{\operatorname{Sh}_{G, \mathrm{ord}}}
\newcommand{\shgb}{\operatorname{Sh}_{G,[b],K^pK_p}}
\newcommand{\tildezcl}{Z^{\circ}_{\ell}}
\newcommand{\tildezl}{{Z}_{\ell}}
\newcommand{\afp}{\mathbb{A}_{f}^{p}}
\newcommand{\af}{\mathbb{A}_{f}}
\newcommand{\afs}{\mathbb{A}_{f}^{\Sigma}}
\newcommand{\ovfp}{\overline{\mathbb{F}}_{p}}
\newcommand{\bgmu}{B(G,\{\mu^{-1}\})}
\newcommand{\fp}{\mathbb{F}_{p}}
\newcommand{\gsc}{G^{\mathrm{sc}}}
\newcommand{\gder}{G^{\mathrm{der}}}
\newcommand{\zcirc}{Z^{\circ}}
\newcommand{\ql}{\mathbb{Q}_{\ell}}
\newcommand{\zl}{\mathbb{Z}_{\ell}}
\newcommand{\qpbr}{\breve{\mathbb{Q}}_{p}}
\newcommand{\zpbr}{\breve{\mathbb{Z}}_{p}}
\newcommand{\spec}{\operatorname{Spec}}
\newcommand{\spf}{\operatorname{Spf}}
\newcommand{\gal}{\operatorname{Gal}}
\newcommand\restr[2]{{% we make the whole thing an ordinary symbol
  \left.\kern-\nulldelimiterspace % automatically resize the bar with \right
  #1 % the function
  \vphantom{\big|} % pretend it's a little taller at normal size
  \right|_{#2} % this is the delimiter
  }}
\author{Pol van Hoften} \email{pol.van.hoften@stanford.edu}
\address{Stanford mathematics department, 450 Jane Stanford, Way Building 380x, Stanford, CA 94305, USA}
\subjclass[2010]{Primary 11G18; Secondary 14G35}
\title{On the ordinary Hecke orbit conjecture}
\begin{document}
\maketitle 
\begin{abstract}
  We prove the ordinary Hecke orbit conjecture for Shimura varieties of Hodge type at primes of good reduction. We make use of the global Serre--Tate coordinates of Chai as well as recent results of D'Addezio about the $p$-adic monodromy of isocrystals. The new ingredients in this paper are a general monodromy theorem for Hecke-stable subvarieties for Shimura varieties of Hodge type, and a rigidity result for the formal completions of ordinary Hecke orbits. Along the way we show that classical Serre--Tate coordinates can be described using unipotent formal groups, generalising a result of Howe.
\end{abstract}
\tableofcontents
\section{Introduction}
Let $\mathcal{A}_{g,n}$ be the moduli space of $g$-dimensional principally polarised abelian varieties $(A, \lambda)$ with level $n \ge 3$ structure over $\ovfp$, for a prime number $p$ coprime to $n$. Recall that there are finite \'etale prime-to-$p$ Hecke correspondences from $\mathcal{A}_{g,n}$ to itself, and that two points $x,y \in \mathcal{A}_{g,n}(\ovfp)$ are said to be \emph{in the same prime-to-$p$ Hecke orbit} if they share a preimage under one of these correspondences. Recall the following result of Chai:
\begin{IntroThm}[Chai \cite{ChaiOrdinarySiegel}] \label{Thm:Chai}
Let $x \in \mathcal{A}_{g,n}(\ovfp)$ be a point corresponding to an ordinary principally polarised abelian variety. Then the prime-to-$p$ Hecke orbit of $x$ is Zariski dense in $\mathcal{A}_{g,n,\ovfp}$.
\end{IntroThm}
Our main result is a generalisation of this theorem to Shimura varieties of Hodge type. To state it, we will first to introduce some notation.
\subsection{Main results}
Let $(G,X)$ be a Shimura datum of Hodge type with reflex field $E$ and let $p$ be a prime number. Let $K_p \subset G(\qp)$ be a hyperspecial subgroup and let $K^p \subset G(\afp)$ be a sufficiently small compact open subgroup. Let $\operatorname{Sh}_G$ be the special fiber of the canonical integral model of the Shimura variety of level $K^pK_p$ at a prime $v$ above $p$ of $E$, constructed in \cites{KisinModels,MadapusiPeraKim}.

Let $E_v$ be the $v$-adic completion of $E$, which is a finite extension of $\mathbb{Q}_p$. There is a closed immersion $\operatorname{Sh}_{G,\ovfp} \to \mathcal{A}_{g,n}$ for some $n$ (see \cite{xu2020normalization}), and the intersection $\shgord$ of the ordinary locus of $\mathcal{A}_{g,n}$ with $\operatorname{Sh}_G$ is nonempty if and only if $E_v=\mathbb{Q}_p$ (see
\cite[Cor. 1.0.2]{LeeNewton}). Recall that there are also prime-to-$p$ Hecke correspondences from $\operatorname{Sh}_G$ to itself, and thus we can define prime-to-$p$ Hecke orbits.
\begin{mainThm} \label{Thm:Ordinary}
Suppose that $E_v=\mathbb{Q}_p$. Then the prime-to-$p$ Hecke orbit of a point $x \in \shgord(\ovfp)$ is Zariski dense in $\operatorname{Sh}_G$.
\end{mainThm}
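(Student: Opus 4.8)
The plan is to carry out the Chai--Oort ``linearity $+$ monodromy'' argument, now for Shimura varieties of abelian type, with two new inputs: the general monodromy theorem for Hecke-stable subvarieties (whose proof rests on D'Addezio's results on the $p$-adic monodromy of $F$-isocrystals) and the rigidity theorem for formal completions of ordinary Hecke orbits.

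\emph{Setup and reduction to Hodge type.} Let $Z \subseteq \operatorname{Sh}_G$ be the Zariski closure of the prime-to-$p$ Hecke orbit of $x$. Because the prime-to-$p$ Hecke correspondences act by finite étale maps, $Z$ is stable under all of them, and since $\operatorname{Sh}_{G,\mu\text{-ord}}$ is open, dense and Hecke-stable, $Z$ meets it in a dense open. We must show $Z = \operatorname{Sh}_G$. I would first reduce to $(G,X)$ of Hodge type: using the construction of the integral canonical models of abelian-type Shimura varieties out of Hodge-type ones, together with the transitivity of the prime-to-$p$ Hecke action on connected components, it suffices to prove density of the relevant $G^{\mathrm{der}}(\mathbb{A}_f^p)$-orbit on a single connected component of a Hodge-type $\operatorname{Sh}_G$ carrying a prime-to-$p$-Hecke-equivariant closed immersion $\operatorname{Sh}_G \hookrightarrow \mathcal{A}_{g,n,\ovfp}$; the hypothesis $E_v = \mathbb{Q}_p$ then ensures this immersion maps $\operatorname{Sh}_{G,\mu\text{-ord}}$ into the ordinary locus of $\mathcal{A}_{g,n}$, so ``ordinary'' is henceforth literal. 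Finally replace $Z$ by a smooth irreducible component $Z^{\circ}$ meeting a Hecke translate of $x$.

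\emph{Monodromy and linearity.} Next I would apply the general monodromy theorem to $Z^{\circ}$: the $p$-adic monodromy group of the étale part of the universal $p$-divisible group along $Z^{\circ}$ (equivalently, of the unit-root $F$-isocrystal) is maximal, coinciding with that of the ambient component of $\operatorname{Sh}_G$; concretely it surjects onto a hyperspecial maximal compact of $G^{\mathrm{der}}(\mathbb{Q}_p)$ acting on the $p$-adic Tate module in the standard way. I expect this to be the main obstacle: its proof uses D'Addezio's reductivity and parabolicity results for the monodromy of overconvergent $F$-isocrystals, which show that the monodromy group over $Z^{\circ}$ is a reductive subgroup of the ambient one normalised, via the Hecke action, by the image of $G(\mathbb{A}_f^p)$, from which one deduces that it cannot be proper. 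Separately, working at the ordinary point $x$, Serre--Tate theory identifies $(\mathcal{A}_{g,n})^{\wedge}_{x}$ with a formal torus whose character group is built from the $p$-adic Tate module, and by Noot--Moonen $(\operatorname{Sh}_G)^{\wedge}_{x}$ is a formal subtorus $\widehat{T}_x$; its character lattice realises the cotangent space of $\operatorname{Sh}_G$ at $x$ as a representation of $G^{\mathrm{der}}$, compatibly with the above monodromy action. By the rigidity theorem, $Z^{\wedge}_{x}$ is a translated formal subtorus of $\widehat{T}_x$, and since $Z^{\circ}$ is reduced, irreducible and contains the unique (hence origin) $\overline{\mathbb{F}}_p$-point $x$, it is a subtorus $\widehat{S}_x \subseteq \widehat{T}_x$.

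\emph{Conclusion.} Put $N := X^{\ast}(\widehat{T}_x / \widehat{S}_x)$, a saturated sublattice of $X^{\ast}(\widehat{T}_x)$. Hecke-stability of $Z^{\circ}$ together with the compatibility of the monodromy action with the Serre--Tate coordinates shows that $N$ is stable under the $p$-adic monodromy group. By maximality of that group, the cotangent representation of $G^{\mathrm{der}}$ admits no nonzero proper invariant sublattice, so $N = 0$ or $N = X^{\ast}(\widehat{T}_x)$; the latter forces $\widehat{S}_x = 0$, i.e. $Z = \{x\}$, which is absurd since ordinary Hecke orbits are infinite. Hence $N = 0$, so $Z^{\circ}$ contains the formal neighbourhood $(\operatorname{Sh}_G)^{\wedge}_{x}$ and therefore $\dim Z^{\circ} = \dim \operatorname{Sh}_G$. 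As $\operatorname{Sh}_G$ is equidimensional and, undoing the reduction, the full prime-to-$p$ Hecke orbit meets every connected component, we conclude $Z = \operatorname{Sh}_G$.
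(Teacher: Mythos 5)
Your overall architecture (reduce to Hodge type, show the formal germ of the orbit closure at a smooth point is a formal subtorus via rigidity, then use monodromy to force that subtorus to be everything) matches the paper, but the monodromy input you feed into the last step is the wrong one, and the argument breaks there. You work with the monodromy of the \'etale part $T_p(X^{\text{\'et}})$ of the universal $p$-divisible group (the unit-root isocrystal) and then claim that the character lattice of the Serre--Tate torus is an irreducible representation of this group, so that the saturated sublattice $N$ cutting out $Z^{\wedge}_x$ is $0$ or everything. Two problems. First, the unit-root monodromy cannot surject onto a hyperspecial maximal compact of $G^{\mathrm{der}}(\mathbb{Q}_p)$: it only sees the Levi $M_{\mu}$ of the parabolic attached to $\mu$ (in the Siegel case $\operatorname{GL}_g$, not $\operatorname{Sp}_{2g}$). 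Second, and fatally, the representation of $M_{\mu}$ on $X^{\ast}(\widehat{T}_x)\otimes\mathbb{Q}_p \simeq \operatorname{Lie}(U_{P_{\mu}})^{\vee}$ is irreducible only when $G^{\mathrm{der}}_{\mathbb{Q}_p}$ is simple. Here $G^{\mathrm{ad}}=\operatorname{Res}_{F/\mathbb{Q}}H$ is merely $\mathbb{Q}$-simple, so $G_{\mathbb{Q}_p}$ breaks into factors indexed by the places of $F$ above $p$ and $\operatorname{Lie}(U_{P_{\mu}})$ decomposes accordingly: already for a Hilbert modular variety at a split prime the Serre--Tate torus is $\hat{\mathbb{G}}_m^{[F:\mathbb{Q}]}$ with the Levi acting diagonally by characters, and there are many invariant sublattices. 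This is exactly why Chai's 1995 Siegel argument, which your proposal reproduces, does not extend beyond groups that remain simple at $p$; the $\ell$-adic monodromy theorem escapes the analogous difficulty because one may choose $\ell$ inert in $F$ and then invoke independence of $\ell$, a freedom unavailable at $p$.

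The paper's route around this is to control the monodromy of the \emph{whole} $F$-isocrystal $\mathcal{E}$, not just its unit-root graded piece. From maximal $\ell$-adic monodromy, D'Addezio's two theorems give that the overconvergent monodromy is all of $G^{\mathrm{der}}$ and hence that the convergent monodromy is the full parabolic $P(\nu_b)\cap G^{\mathrm{der}}$; in particular its unipotent radical has dimension $\langle 2\rho,\mu\rangle=\dim\operatorname{Sh}_G$. Chai's global canonical coordinates (the sheaf $N(G)$ and Theorem 4.4 of \cite{ChaiOrdinary}) identify the rank of $N(G)$ with the dimension of this unipotent radical, i.e.\ with the dimension of the smallest formal subtorus through which $Z^{/z}$ factors; combined with your (correct) rigidity step this gives $Z^{/z}=\operatorname{Sh}_G^{/z}$ with no irreducibility needed. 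To repair your argument you must replace the irreducibility step by this computation of the unipotent radical; note also that the ``compatibility of the monodromy action with the Serre--Tate coordinates'' you invoke is precisely the content of Chai's $N(G)$ machinery and has to be made precise in any case.
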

Our result generalises results of Maulik-Shankar-Tang \cite{MaulikShankarTang}, who deal with $\operatorname{GSpin}$ Shimura varieties associated to a quadratic space over $\mathbb{Q}$ and $GU(1,n-1)$ Shimura varieties associated to imaginary quadratic fields $E$ with $p$ split in $E$; their methods are completely disjoint from ours. There is also work of Shankar \cite{ShankarEtranges} for Shimura varieties of type C, using a group-theoretic version of Chai's strategy of using hypersymmetric points and reducing to the case of Hilbert modular varieties. Shankar crucially proves that the Hodge map $\operatorname{Sh}_G \to \mathcal{A}_{g,n}$ is a closed immersion over the ordinary locus via canonical liftings, whereas we use work of Xu \cite{xu2020normalization}. 

Last we mention work of Zhou \cite{ZhouMotivic}, who proves the Hecke orbit conjecture for the $\mu$-ordinary locus of certain quaternionic Shimura varieties. Our results do not imply his, but there is some overlap between the cases that we cover.

A fairly direct consequence of Theorem \ref{Thm:Ordinary} is a density result for prime-to-$p$ Hecke orbits of an $\ovfp$-point in the $\mu$-ordinary locus of a Shimura variety of abelian type, at primes $v$ above $p$ of the reflex field $E$ where $E_v=\mathbb{Q}_p$, see Corollary \ref{Cor:AbelianType}.

\subsection{Monodromy theorems} An important ingredient in our proof is an $\ell$-adic monodromy theorem for prime-to-$p$ Hecke-stable subvarieties of special fibers of Shimura varieties, in the style of \cite[Cor. 3.5]{ChaiElladicmonodromy}. To state it, let $(G,X)$ be as above and assume for simplicity that $G^{\mathrm{ad}}$ is simple over $\mathbb{Q}$. Let $V_{\ell}$ be the rational $\ell$-adic Tate module of the abelian variety $A$ over $\operatorname{Sh}_G$ coming from the map $\operatorname{Sh}_{G,\ovfp} \to \mathcal{A}_{g,n}$; it is an $\ell$-adic local system of rank $2g$.
\begin{mainThm} \label{Thm:MonodromyIntroduction}
Let $Z \subset \operatorname{Sh}_G$ be a smooth locally closed subvariety that is stable under the prime-to-$p$ Hecke operators. Suppose that $Z$ is not contained in the smallest Newton stratum of $\operatorname{Sh}_G$. Let $z \in Z(\ovfp)$ and let $\zcirc \subset Z_{\ovfp}$ be the connected component of $Z$ containing $z$. Then the neutral component $\mathbb{M}_{\mathrm{geom}}$ of the Zariski closure of the image of the monodromy representation 
\begin{align}
    \rho_{\ell,\mathrm{geom}}:\pi_1^{\text{\'et}}(Z^{\circ},z) \to \operatorname{GL}_{2g}(\mathbb{Q}_{\ell})
\end{align}
corresponding to $V_{\ell }$, is isomorphic to $G^{\mathrm{der}}_{\ql}$.
\end{mainThm}
This generalises work of Chai \cite{ChaiElladicmonodromy} in the Siegel case and others \cites{KasprowitzMonodromy,LXiao} in the PEL case. \smallskip 

In the body of the paper, we work with the integral models of Shimura varieties of Hodge type of level $K_p \subset G(\qp)$ constructed in \cite{KMPS}. Here $K_p$ is not required to be hyperspecial, for example it is allowed to be any (connected) parahoric subgroup. Our results, namely Theorem \ref{Thm:ArithmeticMonodromy} and Corollary \ref{Cor:GeometricMonodromy}, are proved under the assumption that Hypothesis \ref{Hyp:Levi} holds. This hypothesis holds for example when $G_{\qp}$ is quasi-split and has no factors of type $D$, or when $K_p$ is hyperspecial. 

We also prove results about irreducible components of smooth locally closed subvarieties that are stable under the prime-to-$p$ Hecke operators, in the style of \cite[Proposition 4.4]{ChaiElladicmonodromy}, see Theorem \ref{Thm:Irreducibility}. These results will be used to prove irreducibility of Ekedahl--Oort strata in an upcoming version of \cite{HoftenZhou}.

\subsubsection{An overview of the proof of Theorem \ref{Thm:MonodromyIntroduction}} Since $Z^{\circ} \subset \operatorname{Sh}_G$ is defined over a finite field $k$ we can write it as $Z^{\circ}_k \otimes_k \ovfp$. We can then consider the Zariski closure $\mathbb{M}$ of the image of
\begin{align}
    \rho_{\ell}:\pi_1^{\text{\'et}}(Z_k^{\circ},z) \to \operatorname{GL}_{2g}(\mathbb{Q}_{\ell}).
\end{align}
An argument from \cite{ChaiElladicmonodromy} proves that $\mathbb{M}$ is (isomorphic to) a normal subgroup of $G_{\ql}$. If $G_{\ql}^{\mathrm{ad}}$ was a simple-group, then we would be done if we could show that $\mathbb{M}$ was not central in $G_{\ql}$. However, in general there are \emph{no} primes $\ell$ such that $G_{\ql}^{\mathrm{ad}}$ is simple and so at this point we have to deviate from the strategy of \cite{ChaiElladicmonodromy}.

Instead, we control $\mathbb{M}$ by studying the centraliser $I_{x,\ell} \subset G_{\ql}$ of the image of Frobenius elements $\operatorname{Frob}_x \in \pi_1^{\text{\'et}}(Z_k^{\circ},z)$ corresponding to points $x \in Z_k(\mathbb{F}_{q})$. Since the paper \cite{KMPS} makes an in-depth study of these Frobenius elements, we can make use of their results about these centralisers. For example, if $x$ is not contained in the basic locus, then $\operatorname{Frob}_x$ is not central. To get more precise results, we need to know that the element $\operatorname{Frob}_x \in G(\ql)$ is \emph{defined over $\mathbb{Q}$}, which is what Hypothesis \ref{Hyp:Levi} makes precise. 

In this way we can show that $\mathbb{M} \subset G_{\ql}$ is a normal subgroup that surjects onto $G_{\ql}^{\mathrm{ad}}$. The result about $\mathbb{M}_{\mathrm{geom}} \subset \mathbb{M}$ will be deduced from this.
\subsection{A sketch of the proof of Theorem \ref{Thm:Ordinary}}
Let $x \in \operatorname{Sh}_G(\ovfp)$ be an ordinary point, and let $Z$ be the Zariski closure inside $\shgord$ of the prime-to-$p$ Hecke orbit of $x$. Let $y \in Z(\ovfp)$ be a smooth point of $Z$. Recall that it follows from the theory of Serre--Tate coordinates that the formal completion $\mathcal{A}_{g,n}^{/y}$ of $\mathcal{A}_{g,n}$ at $y$ is a formal torus. A special case of the main result of \cite{ShankarZhou} tells us that
\begin{align}
    S^{/y}:=\operatorname{Sh}_{G,\ovfp}^{/y} \subset \mathcal{A}_{g,n}^{/y}
\end{align}
is a formal subtorus. Work of Chai on the deformation theory of ordinary $p$-divisible groups \cite{ChaiOrdinary} tells us that the dimension of the smallest formal subtorus of $S^{/y}$ containing $Z^{/y}$, is encoded in the unipotent radical of the $p$-adic monodromy group of the isocrystal $\mathcal{M}$ associated to the universal abelian variety $A$ over $Z$. 

Using Theorem \ref{Thm:MonodromyIntroduction} and results of D'Addezio, \cites{d2020monodromy, DAddezioII}, we compute the monodromy group of $\mathcal{M}$ over $Z$. It follows from this computation that the smallest formal subtorus of $S^{/y}$ containing $Z^{/y}$ is equal to $S^{/y}$.

We conclude by proving that the formal completion $Z^{/y}$ is a formal subtorus of $S^{/y}$. By the rigidity theorem for $p$-divisible formal groups of Chai \cite{ChaiRigidity}, it suffices to give a representation-theoretic description of the Dieudonn\'e module of $S^{/y}$. Unfortunately, the description of the subtorus $S^{/y}$ coming out of the work of \cite{ShankarZhou} does not readily lend itself to understanding its Dieudonn\'e module. 

Instead, we give a different proof that $S^{/y}$ is a subtorus of $\mathcal{A}_{g,n}^{/y}$. We do this by giving a new description of Serre--Tate coordinates in terms of actions of formal unipotent groups on Rapoport--Zink spaces, generalising results of Howe \cite{HoweUnipotent} in the case $g=1$. Once we have this perspective, the results of \cite{KimLeaves} give an explicit description of the Dieudonn\'e module of the torus $\mathcal{A}_{g,n}^{/y}$ as well as the Dieudonn\'e module of the subtorus $S^{/y}$. 
\subsection{Outline} Sections \ref{Sec:IntModels} and \ref{Sec:Monodromy} form the first part of the paper and work in a more general setting than the rest of the paper. In Section \ref{Sec:IntModels} we introduce the integral models of Shimura varieties of Hodge type constructed in \cite{KMPS}. We recall results and notation from loc. cit., in particular, about the Frobenius elements and their centralisers associated to $\ovfp$-points of these models. In Section \ref{Sec:Monodromy} we prove monodromy theorems for Hecke-stable subvarieties of the special fibers of these integral models, combining results of \cite{KMPS} with ideas of \cite{ChaiElladicmonodromy}.

Section \ref{Sec:SerreTate} is a standalone section on Serre--Tate coordinates. In it, we show that the classical Serre--Tate coordinates, as described in \cite{KatzSerreTate}, can be reinterpreted using actions of unipotent formal groups as in \cite{HoweUnipotent}. This section should be of independent interest. 

In Section \ref{Sec:SerreTateHodge}, we specialise to the smooth canonical integral models of Shimura varieties of Hodge type at hyperspecial level, and we moreover assume that the ordinary locus is nonempty. We reprove a result of \cite{ShankarZhou}, which states that the formal completion of the ordinary locus gives a subtorus of the Serre--Tate torus, and give a group-theoretic description of its Dieudonn\'e module. At the end of this section we also give a short interlude on strongly nontrivial actions of algebraic groups on isocrystals, which we will need to confirm the hypotheses of the rigidity theorem of \cite{ChaiRigidity}.

In Section \ref{Sec:ProofOfTheorems}, we put everything together and prove Theorem \ref{Thm:Ordinary}. We end by deducing a result for Shimura varieties of abelian type.
\subsection{Acknowledgements}
It is clear that our approach owes a substantial intellectual debt to the work of Chai and Oort, and in fact our main results were conceived after reading a remark in \cite{ChaiConjecture}. We are very grateful to the referee for pointing out a serious error in a previous version, for a very detailed reading of a second version and for many helpful comments. We thank Sean Cotner for helpful discussions. 
\section{Integral models of Shimura varieties of Hodge type} \label{Sec:IntModels}
Let $(G,X)$ be a Shimura datum of Hodge type. In this section we follow \cite[Sec. 1.3]{KMPS} and construct integral models for the Shimura varieties associated to $(G,X)$ in a very general situation. The main goal is to introduce various Frobenius elements $\gamma_{x,m,\ell} \in G(\ql)$ associated to $\mathbb{F}_{q^m}$-points of these integral models, and to discuss result of \cite{KMPS} about their centralisers $I_{x,m,\ell}$. We end by introducing Hypothesis \ref{Hyp:Levi}, which will be assumed throughout Section \ref{Sec:Monodromy}, and prove that it holds under minor assumptions.

\subsubsection{Hodge cocharacters} \label{Sec:HodgeCocharacters} If $(G,X)$ is a Shimura datum, then for each $x \in X$ there is a cocharacter $\mu_x:\mathbb{G}_{m,\mathbb{C}} \to G_{\mathbb{C}}$, see \cite[Sec. 1.2.3]{KMPS} for the precise definition. The $G(\mathbb{C})$-conjugacy class of $\mu_x$ does not depend on the choice of $x$ and we will write $\{\mu_X\}$ for this conjugacy class, and denote it by $\{\mu\}$ if $X$ is clear from context. This conjugacy class of cocharacters is defined over a number field $E \subset \mathbb{C}$, called the reflex field.

\subsection{The construction of integral models}
For a symplectic space $(V, \psi)$ over $\mathbb{Q}$ we write $\mathcal{G}_V:=\operatorname{GSp}(V, \psi)$ for the group of symplectic similitudes of $V$ over $\mathbb{Q}$. It admits a Shimura datum $\mathcal{H}_V$ consisting of the union of the Siegel upper and lower half spaces. Let $(G,X)$ be a Shimura datum of Hodge type with reflex field $E$ and let $(G,X) \to (\mathcal{G}_V, \mathcal{H}_V)$ be a Hodge embedding.

Fix a prime $p$ and choose a $\mathbb{Z}_{(p)}$-lattice $V_{(p)} \subset V$ on which $\psi$ is $\mathbb{Z}_{p}$-valued, and write $V_p=V_{(p)} \otimes \mathbb{Z}_p$. Write $\mathcal{K}_p \subset \mathcal{G}_V(\qp)$ for the stabiliser of $V_p$ in $\mathcal{G}_V(\qp)$, and similarly write $K_p$ for the stabiliser of $V_p$ in $G(\qp)$.\footnote{It is explained in \cite[Sec. 1.3.2]{KMPS} that the collection of subgroups $K_p \subset G(\qp)$ that can arise from this construction by varying the symplectic space and the Hodge embedding contains all stabilisers of vertices in the extended Bruhat--Tits building of $G_{\qp}$. It is moreover explained in loc. cit. that this collection is stable under finite intersections.} For every sufficiently small compact open subgroup $K^p \subset G(\afp)$ we can find $\mathcal{K}^p \subset \mathcal{G}_V(\afp)$ such that the Hodge embedding induces a closed immersion (see \cite[Lemma 2.1.2]{KisinModels})
\begin{align}
    \mathbf{Sh}_{K}(G,X) \to \mathbf{Sh}_{\mathcal{K}}(\mathcal{G}_V, \mathcal{H}_V)
\end{align}
of Shimura varieties of level $K=K^pK_p$ and $\mathcal{K}=\mathcal{K}^p \mathcal{K}_p$, respectively.
We let $\mathcal{S}_{\mathcal{K}}$ over $\mathbb{Z}_{(p)}$ be the moduli-theoretic integral model of $\mathbf{Sh}_{\mathcal{K}}(\mathcal{G}_V, \mathcal{H}_V)$; it is a moduli space of polarised abelian schemes $(A, \lambda)$ up to prime-to-$p$ isogeny with level $\mathcal{K}^p$-structure.

Fix a prime $v \mid p$ of $E$ and let
\begin{align}
    \mathscr{S}_{K}:=\mathscr{S}_K(G,X) \to \mathcal{S}_{\mathcal{K}} \otimes_{\mathbb{Z}_{(p)}} \mathcal{O}_{E,(v)}
\end{align}
be the normalisation of the Zariski closure of $\mathbf{Sh}_{K}(G,X)$ in $\mathcal{S}_{\mathcal{K}} \otimes_{\mathbb{Z}_{(p)}} \mathcal{O}_{E,(v)}$. This construction is compatible with changing the level away from $p$ and we define
\begin{align}
    \mathcal{S}_{\mathcal{K}_p}:=\varprojlim_{\mathcal{K}^p \subset \mathcal{G}_V(\afp)} \mathcal{S}_{\mathcal{K}^p \mathcal{K}_p}, ; \qquad
    \mathscr{S}_{K_p}:=\varprojlim_{K^p \subset G(\afp)} \mathscr{S}_{K^pK_p}.
\end{align}
Then as discussed in \cite[Sec. 2.1]{KMPS}, the transition maps in both inverse systems are finite \'etale and moreover $G(\afp)$ acts on $\mathscr{S}_{K_p}$. Let $k=\mathbb{F}_q$ be the residue field of $\mathcal{O}_{E,(v)}$, and write $\shginf$ for the special fiber of $\mathscr{S}_{K_p}$ and $\shg$ for the special fiber of $\mathscr{S}_{K^pK_p}$; these are both schemes over $k$ and $G(\afp)$ acts on $\shginf$. We will write $\shgsp$ for the special fiber of $\mathcal{S}_{\mathcal{K}^p \mathcal{K}_p} \otimes_{\mathbb{Z}_{(p)}} \mathcal{O}_{E,(v)}$ and $\shgspinf$ for the special fiber of $\mathcal{S}_{\mathcal{K}_p} \otimes_{\mathbb{Z}_{(p)}} \mathcal{O}_{E,(v)}$.

Let $V^p$ be the prime-to-$p$ adelic Tate module of the universal abelian variety $\mathcal{A}$ over $\mathcal{S}_{\mathcal{K}_p}$; it is a smooth $\afp$-sheaf on $\mathcal{S}_{\mathcal{K}_p}$. As explained in \cite[Sec. 2.1.1]{KMPS} there is a universal isomorphism
\begin{align}
    \epsilon:V \otimes \underline{\mathbb{A}}_f^p \simeq V^p, 
\end{align}
sending the symplectic form $\psi$ to an $\underline{\mathbb{A}}_f^{p, \times}$-multiple of the Weil pairing. Here $\underline{\mathbb{A}}_f^p$ denotes the pro-\'etale sheaf associated to the topological group $\mathbb{A}_f^p$. 

\subsubsection{Tensors} \label{Sec:Tensors} Write $V^{\otimes}$ for the direct sum of $V^{\otimes n} \otimes (V^{\ast})^{\otimes m}$ for all pairs of integers $m \ge 0, n \ge 0$. We will also use this notation later for modules over commutative rings and modules over sheaves of rings. 

As in \cite[Sec. 1.3.4]{KMPS}, we fix tensors $\{s_{\alpha} \in V\} \subset V^{\otimes}$ such that $G$ is their pointwise stabiliser in $\operatorname{GL}(V)$. Then as explained in \cite[Sec. 1.3.4, Sec. 2.1.2]{KMPS}, there are global sections
\begin{align}
    \{s_{\alpha, \afp}\} \in H^0( \mathscr{S}_{K^pK_p}, (V^p)^{\otimes})
\end{align}
such that if we restrict the isomorphism $\epsilon$ via $\mathscr{S}_{K_p} \to \mathcal{S}_{\mathcal{K}_p}$ we get an isomorphism
\begin{align}
    \eta:V \otimes \underline{\mathbb{A}}_f^p \to V^p
\end{align}
taking $s_{\alpha} \otimes 1$ to $s_{\alpha, \afp}$ for all $\alpha$. In particular, for each $x \in \mathscr{S}_{K_p}(\ovfp)$ the stabiliser of the tensors $\{s_{\alpha, \afp}\}$ in $\operatorname{GL}(V^p)$ is canonically identified with $G \otimes \afp$.

\subsubsection{} \label{Sec:NewtonStratification} We will use $\zpbr$ to denote the $p$-typical Witt vectors $W(\ovfp)$ of $\ovfp$ and we set $\qpbr=\zpbr[1/p]$. We let $\sigma:\zpbr \to \zpbr$ be the automorphism induced by Frobenius on $\ovfp$, and also denote by $\sigma$ the induced automorphism of $\qpbr$.

Let $x \in \shg(\ovfp)$ and let $\mathbb{D}_x$ be the rational contravariant Dieudonn\'e module of the $p$-divisible group $A_x[p^{\infty}]$ of the abelian variety $A_x$, equipped with its Frobenius $\phi$. By \cite[Prop. 1.3.7]{KMPS} there are $\phi$-invariant tensors $\{s_{\alpha, \mathrm{cris}} \} \subset \mathbb{D}_x^{\otimes}$ and in \cite[Sec. 1.3.8]{KMPS} it is argued that there is an isomorphism $\qpbr \otimes V \to \mathbb{D}_x$ sending $1 \otimes s_{\alpha}$ to $s_{\alpha, \mathrm{cris}}$. See the statement of \cite[Prop. 1.3.7]{KMPS} for a characterisation of the tensors $s_{\alpha,\mathrm{cris}}$. 

Under such an isomorphism, the Frobenius $\phi$ corresponds to an element $b_x \in G(\qpbr)$, which is well defined up to $\sigma$-conjugacy, where $\sigma:G(\qpbr) \to G(\qpbr)$ is induced by $\sigma:\qpbr \to \qpbr$. In other words, we can associate to $\phi$ a well defined element $[b_x]$ of the Kottwitz set $B(G)=B(G_{\mathbb{Q}_p})$ of \cite{Kottwitz1}. By \cite[Lem. 1.3.9]{KMPS} the element $[b_x]$ is contained in the neutral acceptable set $\bgmu$ consisting of the $\{\mu^{-1}\}$-admissible elements defined in \cite[Sec. 1.1.5]{KMPS}. Here we use $\{\mu\}$ to denote the $G(\qpbar)$ conjugacy class of cocharacters induced by the place $v$ of $E$, where we recall that $\{\mu\}$ was introduced in Section \ref{Sec:HodgeCocharacters}.

It follows from \cite[Thm. 1.3.14]{KMPS} that there are locally closed subschemes $\shgb$ of $\shg$, called \emph{Newton strata}, indexed by $[b] \in \bgmu$, such that
\begin{align}
    \shgb(\ovfp) &= \{x \in \shg(\ovfp) \; | \; [b_x]=[b] \}
\end{align}
and such that
\begin{align}
    \overline{\shgb} \subset  \bigcup_{[b]' \le [b]} \operatorname{Sh}_{G,[b'],K^pK_p}.
\end{align}
Here we are using the partial order on $\bgmu$ defined in \cite[Sec. 2.3]{RapoportRichartz}.
\subsection{Centralisers} \label{Sec:Centralisers} Let $x \in \shginf(\ovfp)$ and choose a sufficiently divisible integer $m$ such that the image of $x$ in $\shg(\ovfp)$ is defined over $\mathbb{F}_{q^m}$. Then the geometric $q^m$-Frobenius $\operatorname{Frob}_{q^m}$ acts on $V^p$ via tensor-preserving automorphisms and therefore determines an element
\begin{align}
    \gamma_{x,m}^p \in G(\afp),
\end{align}
which depends on $x$ and $m$. For $\ell=p$ there is an element $\delta_{x,m} \in G(\mathbb{Q}_{q^m})$, constructed in \cite[Sec. 2.1.7]{KMPS}, whose class in $B(G_{\qp})$ is equal to $[b_x]$. Moreover there is an element $\gamma_{x,m,p} \in G(\mathbb{Q}_{q^m})$ such that 
\begin{align} \label{Eq:Norm}
    \gamma_{x,m,p}=\delta_{x,m} \sigma(\delta_{x,m}) \cdots \sigma^{rm-1}(\delta_{x,m}),
\end{align}
where we write $q=p^r$ and where $\sigma$ denotes the Frobenius on $G(\mathbb{Q}_{q^m})$.

We define $I_{x,\afp} \subset G_{\afp}$ to be the centraliser of $\gamma_{x,m}^p$, which does not depend on $m$ as long as $m$ is sufficiently divisible. We similarly define $I_{x,\ell} \subset G_{\ql}$ for $\ell \not=p$ to be the centraliser of the projection $\gamma_{x,m,\ell}$ of $\gamma_{x,m}^p$ to $G_{\ql}$ for sufficiently divisible $m$.

We define $I_{x,m,p}$ to be the algebraic group over $\qp$ whose functor of points is given by
\begin{align}
    R \mapsto \{g \in G(\mathbb{Q}_{q^m} \otimes_{\qp} R) \;  | \; g \delta_{x,m} = \delta_{x,m} \sigma(g)\},
\end{align}
where $\sigma$ is induced by $\sigma:G(\mathbb{Q}_{q^m}) \to G(\mathbb{Q}_{q^m})$. As explained in \cite[Sec. 2.1.7]{KMPS}, the base change $I_{x,m,p} \otimes \mathbb{Q}_{q^m}$ is naturally identified with the centraliser of the semisimple element $\gamma_{x,m,p}$ in $G(\mathbb{Q}_{q^m})$, and $I_{x,m,p}$ is thus reductive. We similarly define $J_{\delta_{x,m}}$ by its functor of points 
\begin{align}
    R \mapsto \{g \in G(\qpbr \otimes_{\qp} R) \;  | \; g \delta_{x,m} = \delta_{x,m} \sigma(g)\}.
\end{align}

\subsubsection{} Consider the decomposition
\begin{align} \label{eq:ProductDecomposition}
    G^{\mathrm{ad}}=\prod_{i=1}^n G_i
\end{align}
of $G^{\mathrm{ad}}$ into simple groups over $\mathbb{Q}$. Let $\delta_{x,m,i}$ and $\gamma_{x,m,p,i}$ be the images of $\delta_{x,m}$ and $\gamma_{x,m,p}$ in $G_i(\mathbb{Q}_{q^m})$.
\begin{Lem} \label{Lem:ProductIXP}
There is a product decomposition
\begin{align}
    I_{x,m,p}/Z_{G,\qp}= \prod_{i=1}^n I_{x,m,p,i},
\end{align}
where $I_{x,m,p,i}$ represents the functor on $\qp$-algebras sending $R$ to
\begin{align}
    \{g \in G_i(\mathbb{Q}_{q^m} \otimes_{\qp} R) \;  | \; g \delta_{x,m,i} = \delta_{x,m,i} \sigma(g)\}.
\end{align}
Similarly there is a product decomposition
\begin{align}
    J_{\delta_{x,m}}/Z_G \simeq \prod_{i=1}^n J_{\delta_{x,m,i}},
\end{align}
where $J_{\delta_{x,m,i}}$ represents the functor on $\qp$-algebras sending $R$ to
\begin{align}
    \{g \in G_i(\qpbr \otimes_{\qp} R) \;  | \; g \delta_{x,m,i} = \delta_{x,m,i} \sigma(g)\}.
\end{align}
\end{Lem}
\begin{proof}
Consider the commutative diagram
\begin{equation}
    \begin{tikzcd}
    I_{x,m,p} \arrow[r] \arrow[d, hook] & \prod_{i=1}^n I_{x,m,p,i} \arrow[d, hook] \\
    \operatorname{Res}_{\mathbb{Q}_{q^m}/\qp} G \arrow{r} & \prod_{i=1}^n \operatorname{Res}_{\mathbb{Q}_{q^m}/\qp} G_i.
    \end{tikzcd}
\end{equation}
Since the kernel of the bottom map is central and the bottom map is surjective, it follows that the natural map $I_{x,p} \to \prod_{i=1}^n I_{x,m,p,i}$ is surjective. The kernel is given by the intersection of $I_{x,m,p}$ with the kernel of the bottom map and thus has the following functor of points: 
\begin{align}
    R \mapsto \{g \in Z_G(\mathbb{Q}_{q^m} \otimes_{\qp} R)\;  | \; \delta_{x,m,i} = \delta_{x,m,i} \sigma(g)\}.
\end{align}
This forces $g=\sigma(g)$ and so $g \in Z_G(R) \subset Z_G(\mathbb{Q}_{q^m} \otimes_{\qp} R)$. The same proof shows that there is a product decomposition $J_{\delta_{x,m}}/Z_G \simeq \prod_{i=1}^n J_{\delta_{x,m,i}}$.
\end{proof}
Note that $I_{x,m,p,i} \otimes \mathbb{Q}_{q^m}$ can be identified with the centraliser of $\gamma_{x,m,p,i}$ in $G_{i,\mathbb{Q}_{q^m}}$ as in the beginning of Section \ref{Sec:Centralisers}. The centraliser of $\gamma_{x,m,p,i} \in G(\qpbr)$ does not depend on $m$ for $m$ sufficiently divisible, and thus the group $I_{x,m,p}$ does not depend on $m$ for $m$ sufficiently divisible. We will write $I_{x,p}$ for the group $I_{x,m,p}$ for sufficiently divisible $m$ and similarly $I_{x,i,p}$ for the group $I_{x,m,p,i}$. We will identify $I_{x,p} \otimes \qpbr$ with the centraliser of $\gamma_{x,m,p}$ in $G(\qpbr)$ for sufficiently divisible $m$ and similarly identify $I_{x,i,p}$ with the centraliser of $\gamma_{x,m,p,i}$ in $G_i(\qpbr)$. 

\subsubsection{} Let $x \in \shginf(\ovfp)$ and let $\operatorname{Aut}(A_x)$ be the algebraic group over $\mathbb{Q}$ with functor of points
\begin{align}
    \operatorname{Aut}(A_x)(R)=\left(\operatorname{End}(A_x) \otimes_{\mathbb{Z}} R \right)^{\times}.
\end{align}
Following \cite[Sec. 2.1.3]{KMPS}, we define $I_x^p$ to be the largest closed subgroup of $\operatorname{Aut}(A_x)$ that fixes the tensors $s_{\alpha, \afp}$ and $I_x \subset I_x^p$ to be the largest closed subgroup that also fixes the tensors $s_{\alpha, \mathrm{cris}}$. There are natural maps $I_{x,\ql} \to I_{x,\ell}$ for all (including $\ell=p$), see \cite[Sec. 2.1.8]{KMPS} for the $\ell=p$ case. 

The groups $I_{x,\ell}$ are connected reductive subgroups of $G_{\ql}$ and in fact Levi subgroups over $\overline{\mathbb{Q}}_{\ell}$. By \cite[Cor. 2.1.9]{KMPS} for all $\ell$ (including $\ell=p$) the natural map
\begin{align}
    I_{x, \ql} \to I_{x,\ell}
\end{align}
is an isomorphism. This induces closed immersion of groups $I_{x,\qp} \to J_{\delta_{x,m}}$ for some sufficiently divisible $m$. 

\subsection{An assumption}
We will need to assume the following hypothesis to prove our main monodromy theorems in Section \ref{Sec:Monodromy}.
\begin{Hyp} \label{Hyp:Levi}
For all points $x \in \shginf(\ovfp)$ and for sufficiently divisible $m$ depending on $x$, there is an element $\gamma_{x,m} \in G(\qbar)$ that is conjugate to $\gamma_{x,m,\ell}$ in $G(\qlbar)$ for all $\ell$ (including $\ell=p)$. Moreover the $G(\qbar)$-conjugacy class of $\gamma_{x,m}$ is stable under the action of $\operatorname{Gal}(\qbar/\mathbb{Q})$.
\end{Hyp}
If the $G(\qbar)$-conjugacy class of $\gamma_{x,m}$ contains an element of $G(\mathbb{Q})$, then it is clearly Galois stable. However the converse does not necessarily hold. 
\begin{Lem} \label{Lem:HypothesisHyperspecial}
The hypothesis holds when $K_p$ is hyperspecial.
\end{Lem}
\begin{proof}
If $K_p$ is hyperspecial, then \cite[Cor. 2.3.1]{KisinPoints} tells us that there is an element $\gamma_{x,m} \in G(\mathbb{Q})$ that is conjugate to $\gamma_{x,m,\ell}$ in $G(\qlbar)$ for all $\ell$ (including $\ell=p)$. 
\end{proof}

\begin{Rem}
By \cite[Cor. 2.2.14]{KMPS}, an element $\gamma_{x,m} \in G(\mathbb{Q})$ satisfying the requirements of Hypothesis \ref{Hyp:Levi} exists when $G_{\qp}$ is quasi-split and has no factors of type $D$. If $K_p$ is very special, the group $G_{\qp}$ is tamely ramified and satisfies $p \nmid \# \pi_1(G^{\mathrm{der}})$ and $\pi_1(G)_{I}$ is torsion free, where $I \subset \gal(\qpbar/\qp)$ is the inertia group, then the existence of an element $\gamma_{x,m} \in G(\mathbb{Q})$ satisfying the requirements of Hypothesis \ref{Hyp:Levi} follows from Theorem I of \cite{HoftenZhou}

If $K_p$ is a very special parahoric subgroup and the triple $(G,X,K_p)$ is acceptable in the sense of \cite[Def. 5.2.6, Def. 5.2.9]{KisinZhou}, then \cite[Thm. 6.1.4]{KisinZhou} proves the existence of an element $\gamma_{x,m} \in G(\qbar)$ satisfying the requirements of Hypothesis \ref{Hyp:Levi}.
\end{Rem}
\begin{Rem}
When $G_{\qp}$ is not quasi-split, one should probably not expect that the $G(\qbar)$-conjugacy class of $\gamma_{x,m}$ always contains an element of $G(\mathbb{Q})$. This is because CM lifts do not exist in general when $G_{\qp}$ is not quasi-split. However, we expect Hypothesis \ref{Hyp:Levi} to hold in full generality.

For example, let $x \in \shginf(\ovfp)$ be a point corresponding to the good reduction of an abelian variety \emph{defined over a number field} and assume that $p>2$. Then \cite[Thm. 7.2.4]{KisinZhou} tells us that there is an element $\gamma_{x,m} \in G(\qbar)$ satisfying the requirements of Hypothesis \ref{Hyp:Levi}.
\end{Rem}

\subsubsection{} We end by deducing a consequence of Hypothesis \ref{Hyp:Levi} that will be used in Section \ref{Sec:Monodromy}. Let $G^{\ast}$ denote the quasi-split inner form of $G$ over $\mathbb{Q}$ and let $\Psi:G \otimes \qbar \to G^{\ast} \otimes \qbar$ be an inner twisting. This means that every $\tau \in \gal(\qbar/\mathbb{Q})$ satisfies
\begin{align}
    \Psi(\tau(g)) = h_{\tau} \tau(\Psi(g)) h_{\tau}^{-1}
\end{align}
for some element $h_{\tau} \in G^{\ast}(\qbar)$. A direct consequence of this definition is that the image under $\psi$ of a $\gal(\qbar/\mathbb{Q})$-stable $G(\qbar)$-conjugacy class is a $\gal(\qbar/\mathbb{Q})$-stable $G^{\ast}(\qbar)$-conjugacy class.
\begin{Lem} \label{Lem:HypothesisTrue}
Suppose that Hypothesis \ref{Hyp:Levi} holds and let $\gamma_{x,m} \in G(\qbar)$ be the element that is guaranteed to exist by that Hypothesis. Then for sufficiently divisible $m$ the element $\Psi(\gamma_{x,m})$ is $G^{\ast}(\qbar)$-conjugate to an element in $G^{\ast}(\mathbb{Q})$.
\end{Lem}
\begin{proof}
If $m$ is sufficiently divisible, then the centraliser of $\gamma_{x,m}$ is connected because this is true for $\gamma_{x,m,\ell}$ and the formation of centralisers commutes with base change. Since $G^{\ast}$ is quasi-split and the element $\Psi(\gamma_{x,m})$ is semisimple with connected centraliser, we may apply \cite[Thm. 4.7.(2)]{KottwitzConjugacy} which tells us that the $G^{\ast}(\mathbb{Q})$-conjugacy class of $\Psi(\gamma_{x,m})$ contains an element of $G^{\ast}(\mathbb{Q})$.
\end{proof}

\section{Monodromy of Hecke-invariant subvarieties} \label{Sec:Monodromy}
In this section we prove an $\ell$-adic monodromy theorem in the style of Chai \cite{ChaiElladicmonodromy} (c.f. \cites{LXiao,KasprowitzMonodromy}) for prime-to-$p$ Hecke stable subvarieties of Shimura varieties of Hodge type in characteristic $p$. We expect the results in this section to be of independent interest, at least beyond the hyperspecial case that we will use in the rest of this article. \medskip

In Section \ref{Sec:ArithmeticMonodromyI} we establish formal properties of subvarieties $Z$ of Shimura varieties of Hodge type in characteristic $p$ that are stable under prime-to-$p$ Hecke operators. Using techniques from \cite{ChaiElladicmonodromy} we prove that the $\ell$-adic monodromy groups of the universal abelian variety over such $Z$ are normal subgroups of $G_{\ql}$, this is stated as Corollary \ref{Cor:Normal}. 

In Section \ref{Sec:ArithmeticMonodromyII} we use the results from \cite{KMPS} in combination with Hypothesis \ref{Hyp:Levi} to prove Theorem \ref{Thm:ArithmeticMonodromy} and Corollary \ref{Cor:GeometricMonodromy}; the latter is a generalisation of Theorem \ref{Thm:MonodromyIntroduction}. In Section \ref{Sec:PAdicMonodromy} we combine this theorem with results of D'Addezio \cite{d2020monodromy} to deduce results about the $p$-adic monodromy groups of the universal abelian variety over Hecke stable subvarieties.

Finally, in Section \ref{Sec:IrrComp} we prove results about irreducible components of Hecke stable subvarieties in the style of \cite[Prop. 4.5.4]{ChaiElladicmonodromy}. We will not use these results in the rest of this article and so this section can safely be skipped for the reader only interested in the proof of Theorem \ref{Thm:Ordinary}.

\subsection{Arithmetic monodromy groups I} \label{Sec:ArithmeticMonodromyI}
Let the notation be as in Section \ref{Sec:IntModels}. In this section we are going to study arithmetic monodromy groups of Hecke stable subvarieties of $\shg$. For maximal generality, we do not assume that these are defined over $k=\mathbb{F}_q$ and so from now on we will implicitly base change the Shimura variety $\shg$ to an unspecified finite extension of $k$, which we will also denote by $k$.

The morphism $\pi:\shginf \to \shg$ is a pro-\'etale $K^p$-torsor over $\shg$ such that the action of $K^p \subset G(\afp)$ extends to an action of $G(\afp)$. Let $Z \subset \shg$ be a locally closed subscheme and let $\tilde{Z}$ be the inverse image of $Z$ under $\pi$. We say that $Z$ is \emph{stable under the prime-to-$p$-Hecke operators}, or that $Z$ is $G(\afp)$\emph{-stable}, if $\tilde{Z}$ is $G(\afp)$-stable. 

\textbf{For the rest of this section $\ell$ will be used to denote a prime number not equal to $p$.} For such $\ell$ we let $K_{\ell}$ be the image of $K^p$ in $G(\mathbb{Q}_{\ell})$ under the projection $G(\afp) \to G(\mathbb{Q}_{\ell})$. We let 
\begin{align} \label{Def:PiEll}
    \pi_{\ell}:\shginf \times_{K^p} K_{\ell} \to \shg 
\end{align}
be the induced pro-\'etale $K_{\ell}$-torsor. For $Z \subset \shg$ a locally closed subscheme we will write $\tildezl$ for the inverse image of $Z$ under $\pi_{\ell}$. We say that $Z$ is \emph{stable under the $\ell$-adic Hecke operators}, or that $Z$ is $G(\ql)$\emph{-stable}, if $\tildezl$ is $G(\ql)$-stable. When discussing $G(\ql)$-stable $Z$ we will always implicitly work with $\ell \not=p$. If $Z$ is $G(\afp)$-stable, then it is automatically $G(\ql)$-stable for all $\ell \not=p$.

All the results in this section will be stated for smooth $Z$, and the following lemma will be used to reduce to the smooth case in the proof of Theorem \ref{Thm:Ordinary}.
\begin{Lem} \label{Lem:DevSmooth}
Let $Z \subset \shg$ be a locally closed subscheme that is stable under the action of $G(\afp)$ (respectively $G(\ql)$), then the smooth locus $U \subset Z$ is also stable under this action.
\end{Lem}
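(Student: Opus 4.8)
The plan is to reduce the statement to the fact that the $G(\afp)$-action on the tower $\varprojlim_{K^p} \operatorname{Sh}_{G,K^p}$ permutes the (locally closed) strata of $\shg$ by \emph{isomorphisms} of $\ovfp$-schemes, and then invoke the fact that the smooth locus is intrinsic to a scheme (it depends only on the isomorphism class of $Z$, not on any embedding). Concretely: an element $g \in G(\afp)$ induces, for suitable $K^p$ and $(K^p)'$ with $g^{-1} K^p g \subset (K^p)'$, a finite \'etale correspondence
\begin{align}
  \operatorname{Sh}_{G,(K^p)'} \xleftarrow{\ \pi_1\ } \operatorname{Sh}_{G, K^p \cap g (K^p)' g^{-1}} \xrightarrow{\ \pi_2 = g \cdot\ } \operatorname{Sh}_{G,K^p}
\end{align}
in which both projections are finite \'etale; the Hecke operator is the resulting correspondence on $\shg$ after passing to an appropriate level. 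Since $Z$ (and hence $\tilde Z$, hence its image at each finite level) is $G(\afp)$-stable, the correspondence restricts to a correspondence of locally closed subschemes $\pi_1^{-1}(Z) \to Z$ with both legs finite \'etale; in particular finite \'etale maps are smooth, so they preserve smooth loci: $\pi_i^{-1}(U) = $ (smooth locus of $\pi_i^{-1}(Z)$). The smooth locus of a scheme of finite type over a field is an open subscheme characterised scheme-theoretically, so it is automatically stable under any automorphism and compatible with any smooth (in particular \'etale) base change.

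The key steps, in order: first I would recall that $\tilde Z \to Z$ is a pro-(finite \'etale) $U^p$-torsor, so smoothness of $Z$ at a point $z$ is equivalent to smoothness of $\tilde Z$ along the fiber $\pi^{-1}(z)$, because $\pi$ is formally \'etale and faithfully flat; consequently the inverse image $\tilde U := \pi^{-1}(U)$ of the smooth locus $U\subset Z$ is exactly the smooth locus of $\tilde Z$. Second, since the smooth locus of a scheme is preserved by every automorphism of that scheme, and $G(\afp)$ acts on $\tilde Z$ by automorphisms (of $\ovfp$-schemes), $\tilde U$ is $G(\afp)$-stable. Third, descending back down $\pi$, the $G(\afp)$-stable open $\tilde U$ is the inverse image of an open $U' \subset Z$; but $\tilde U = \pi^{-1}(U)$ already, and $\pi$ is surjective, so $U' = U$, which shows $U$ is $G(\afp)$-stable. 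The same argument verbatim handles the variants with $\gsc(\afp)$ or $\gsc(\afs)$ in place of $G(\afp)$, since those also act through automorphisms of the relevant pro-\'etale cover.

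I do not expect a genuine obstacle here; the only point requiring a little care is the passage between the pro-object $\tilde Z$ and finite levels, i.e.\ checking that ``$g$ acts by an isomorphism'' is a statement one can make at each finite level after possibly shrinking $K^p$, and that smoothness of $Z$ can be tested after the finite \'etale base change $\operatorname{Sh}_{G,K^p} \to \shg$ (descent of smoothness along faithfully flat morphisms, \cite{KisinModels} or EGA IV). One should also note that $Z$ being locally closed means $U$ is open in $Z$ and hence locally closed in $\shg$, so the statement is internally consistent. Everything else is formal: smoothness is fppf-local on the source and target, and finite \'etale morphisms are smooth, so the smooth locus is transported correctly by the Hecke correspondences.
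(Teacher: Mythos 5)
Your proposal is correct and follows essentially the same route as the paper: the Hecke operator attached to $g$ is the finite \'etale correspondence you write down, and since all legs are finite \'etale they preserve (and reflect) smooth points, so the smooth locus of a Hecke-stable $Z$ is Hecke-stable. Your additional reformulation via the pro-\'etale torsor $\tilde Z \to Z$ is just a more detailed packaging of the same observation.
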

\begin{proof}
For $g \in G(\afp)$ and $K^p \subset G(\afp)$ there is a finite \'etale correspondence
\begin{equation}
    \begin{tikzcd}
    & \operatorname{Sh}_{G, (K^p \cap g K^p g^{-1}) K_p} \arrow{dl}{p_1} \arrow{dr}{p_2} \\
    \operatorname{Sh}_{G,K^pK_p} & & \operatorname{Sh}_{G, g K^p g^{-1}K_p} \arrow{r}{g} & \operatorname{Sh}_{G,K^pK_p}.
    \end{tikzcd}
\end{equation}
and the assumption that $\tilde{Z}$ is stable under the action of $g$ is equivalent to the statement that the inverse image of $Z$ under $p_1$ is the same as the inverse image of $Z$ under $g \circ p_2$ for all choices of $K^p$. Because all the maps in the diagram are finite \'etale, the same is true for the smooth locus $U$ of $Z$. Therefore the inverse image $\tilde{U}$ of $U$ under $\pi$ is stable under the action of $g \in G(\afp)$.
\end{proof}
\begin{Lem} \label{Lem:ClosureStable}
Let $Z \subset Y \subset \shg$ be locally closed and $G(\afp)$-stable (resp. $G(\ql)$-stable) subvarieties. Then the closure of $Z$ in $Y$ is also stable under $G(\afp)$ (resp. $G(\ql)$).
\end{Lem}
\begin{proof}
This follows in the same way as in the proof of Lemma \ref{Lem:DevSmooth} from the fact that the prime-to-$p$ Hecke correspondences are finite \'etale; indeed finite \'etale maps are open and closed, and thus take closures to closures. 
\end{proof}
\subsubsection{Some general topology} Let $\{X_i\}_{i \in I}$ be a countably indexed cofiltered inverse system of finite type schemes over a field $k$ with surjective affine transition maps. Let $X = \varprojlim_i X_i$ be the inverse limit, it is a nonempty quasicompact scheme by \cite[Lem. 01Z2]{stacks-project}. Recall that for a quasicompact scheme $Y$ there is a profinite topological space $\pi_0(Y)$ of connected components of $Y$.
\begin{Lem} \label{Lem:InverseLimitsConnected}
The natural map
\begin{align} \label{Eq:NaturalLimitMap}
    \pi_0(X) \to \varprojlim_{i} \pi_0(X_i)
\end{align}
is a homeomorphism.
\end{Lem}
\begin{proof}
The left hand side of \eqref{Eq:NaturalLimitMap} is a profinite topological space by \cite[Lem. 0906]{stacks-project} and the right hand side of \eqref{Eq:NaturalLimitMap} is visibly an inverse limit of finite sets. Hence both sides are compact Hausdorff topological spaces and to show that the map is a homeomorphism it suffices to show that it is a bijection.

To show that the natural map is a bijection, we construct an explicit inverse. Any compatible system of connected components $\{V_{i}\}_{i \in I}$ of $\{X_i\}_{i \in I}$ has nonempty and quasicompact inverse limit $V \subset X$ by \cite[Lem. 0A2W]{stacks-project}. To prove that $V$ is connected we suppose that there are nonempty open and closed subsets $W$ and $W'$ of $V$ such that $V=W \coprod W'$. Then $W$ and $W'$ are quasicompact open because $V$ is quasicompact.

Now \cite[Lem. 0A30.(1)]{stacks-project} tells us that we can find $i$ and (nonempty) constructible quasicompact open subsets $Z, Z'$ of $V_{i}$ such that $W$ is the inverse image of $Z$ under $V \to V_i$ and similarly $W'$ is the inverse image of $Z'$ under $V \to V_i$. In particular, the subsets $Z$ and $Z'$ are disjoint nonempty open subsets of $V_{i}$, which gives us a contradiction since $V_{i}$ is connected. 

We have produced a map $\varprojlim_{i} \pi_0(X_i) \to \pi_0(X)$ and it is not hard to check that it is an inverse of the natural map from the lemma; this concludes the proof.
\end{proof}
\begin{Cor} \label{Cor:Continuity}
Let $Z \subset \shg$ be a $G(\afp)$-stable (resp. $G(\ql)$-stable) locally closed subscheme of $\shg$ and let $\tilde{Z}$ be as above. Then $\pi_0(\tilde{Z})$ is equipped with a continuous action of $G(\afp)$ (respectively $\pi_0(\tildezl)$ is equipped with a continuous action of $G(\ql)$). 
\end{Cor}
\begin{proof}
The existence of the action follows from the existence of the action on $\tilde{Z}$ (resp. $\tildezl$). The continuity follows from the continuity of the action of $K^p$ on $\varprojlim_{K^p} Z_{K^p}$ (resp. the continuity of the action of $K_{\ell}$ on $\tildezl$) and Lemma \ref{Lem:InverseLimitsConnected}.
\end{proof}
The following lemma is only a slight generalisation of \cite[Lem. 2.8]{ChaiElladicmonodromy}, but we include a proof for the benefit of the reader.
\begin{Lem} \label{Lem:QuotientTopology}
Let $X$ be a second-countable compact Hausdorff topological space with a transitive and continuous action of a locally profinite topological group $\mathbb{G}$. Let $x \in X$ with stabiliser $\mathbb{G}_x \subset \mathbb{G}$, then the orbit map
\begin{align}
    O:\mathbb{G}/\mathbb{G}_x \to X
\end{align}
is a homeomorphism.
\end{Lem}
\begin{proof}
We can write $\mathbb{G}$ as the increasing union of countably many compact open sets, for example by using finite unions of cosets of a compact open subgroup $\mathbb{K} \subset \mathbb{G}$. Since the quotient map $\mathbb{G} \to \mathbb{G}/\mathbb{G}_x$ is open for any topological group, it follows that $\mathbb{G}/\mathbb{G}_x$ can be written as the increasing union of countably many compact open subsets. 

Since the orbit map is surjective, the topological space $X$ can be written as a countable union of compact subsets $O(U)$ for $U \subset \mathbb{G}/\mathbb{G}_x$ compact open. Because $X$ is second-countable it is metrisable by Urysohn's metrisation theorem and thus the Baire category theorem tells us that there exists a compact open subset $U$ of $\mathbb{G}/\mathbb{G}_x$ such that $O(U)$ contains an open subset $W$ of $X$. 

Choose a compact open subset $V \subset U$ such that $O(V) \subset W$. Then $O:V \to O(V)$ is a continuous bijection between compact Hausdorff topological spaces and hence a homeomorphism. Now note that $\mathbb{G}$ acts transitively on both $\mathbb{G}/\mathbb{G}_x$ and on $X$. Hence by moving around $V$ we see that any point of $y \in \mathbb{G}/\mathbb{G}_x$ has an open neighborhood $V_y$ such that the natural map $O:V_y \to O(V_y)$ is a homeomorphism, and we conclude that $O$ is a homeomorphism.
\end{proof}

\subsubsection{Lie groups over \texorpdfstring{$\ell$}{l}-adic local fields} Recall that a topological group $M$ is called an $\ell$-adic Lie group if it admits the (necessarily unique) structure of an $\ell$-adic Lie group, see \cite[Def. 2.1, Prop. 2.2]{LecturesLieGroups}. If $M$ is an $\ell$-adic Lie group, then by definition there is a finite-dimensional $\ql$-Lie algebra $\operatorname{Lie} M$, an open neighborhood $U \subset \operatorname{Lie} M$ of the identity and an exponential map $\operatorname{Exp}: U \to M$ that is a homeomorphism onto a compact open subgroup of $M$. For example for an algebraic group $H$ over $\ql$ the topological group $H(\ql)$ is an $\ell$-adic Lie group with Lie algebra $\operatorname{Lie} H(\ql) = \operatorname{Lie} H$.
\begin{Lem} \label{Lem:EllAdicLie}
Let $H$ be an algebraic group over $\ql$ and let $M \subset H(\ql)$ be a subgroup closed in the $\ell$-adic topology. Then $M$ is an $\ell$-adic Lie group and the morphism $M \to H(\ql)$ is a morphism of $\ell$-adic Lie groups. Moreover, the induced Lie subalgebra $\operatorname{Lie} M \subset \operatorname{Lie} H(\ql) = \operatorname{Lie} H$ satisfies
\begin{align}
    [\operatorname{Lie} M,\operatorname{Lie} M] = [\operatorname{Lie} \mathbb{M},\operatorname{Lie} \mathbb{M}],
\end{align}
where the bracket notation means the commutator of two Lie subalgebras and where $\mathbb{M}$ is the Zariski closure of $M$.
\end{Lem}
\begin{proof}
The group $M$ is an $\ell$-adic Lie group by \cite[Prop. 2.3]{LecturesLieGroups} and the morphism $M \to H(\ql)$ is a morphism of $\ell$-adic Lie groups by \cite[Prop. 2.2]{LecturesLieGroups}. This implies that there is an induced morphism on Lie algebras $\operatorname{Lie} M \to \operatorname{Lie} H(\ql) = \operatorname{Lie} H$. 

Since $M \subset \mathbb{M}(\ql)$ is Zariski dense, it follows that the smallest algebraic subgroup of $H$ whose Lie algebra contains $\operatorname{Lie} M$ is equal to $\mathbb{M}$; indeed, if there is a smaller algebraic subgroup $\mathbb{M}' \subset \mathbb{M}$ with $\operatorname{Lie} M \subset \operatorname{Lie} \mathbb{M}'$, then we see using the $\ell$-adic exponential map that there is a finite index subgroup of $M$ contained in $\mathbb{M}'(\ql)$. This contradicts the fact that $M$ is Zariski dense in $\mathbb{M}$.

The fact that the smallest algebraic subgroup of $H$ whose Lie algebra contains $\operatorname{Lie} M$ is equal to $\mathbb{M}$ is expressed as $\mathfrak{a}(\operatorname{Lie} M)=\operatorname{Lie} \mathbb{M}$ in the notation of \cite[Sec. 7.1]{Borel}. By \cite[Cor. 7.9]{Borel} we have the following equality of Lie subalgebras of $\operatorname{Lie} H$ 
\begin{align}
    [\operatorname{Lie} M,\operatorname{Lie} M] &= [\mathfrak{a}(\operatorname{Lie} M),\mathfrak{a}(\operatorname{Lie} M)] \\
    &=[\operatorname{Lie} \mathbb{M},\operatorname{Lie} \mathbb{M}]. 
\end{align}
\end{proof}
\begin{Lem} \label{Lem:DenseCompactOpen}
Let $\mathbb{M}$ be a semisimple algebraic group over $\ql$ and let $M \subset \mathbb{M}(\ql)$ be subgroup closed in the $\ell$-adic topology. If $M$ equipped with the subspace topology is compact and $M$ is Zariski dense in $\mathbb{M}$, then $M$ is a compact open subgroup of $\mathbb{M}(\ql)$.
\end{Lem}
\begin{proof}
It follows from Lemma \ref{Lem:EllAdicLie} that $M$ is an $\ell$-adic Lie group, that $M \to \mathbb{M}(\ql)$ is a morphism of $\ell$-adic Lie groups and that the Lie algebra of $M$ is equal to the Lie algebra of $\mathbb{M}$, since $\mathbb{M}$ is semisimple. Now we can use the exponential map for $\ell$-adic Lie groups to show that $M$ contains a compact open subgroup of $\mathbb{M}(\ql)$.
Since $M$ is itself compact, this implies that $M$ is also a compact open subgroup of $\mathbb{M}(\ql)$.
\end{proof}

\subsubsection{} The main theorem of Galois theory for schemes tells us that the category of finite-\'etale covers of a smooth connected scheme $Z$ over $k$ is equivalent to the category of finite sets equipped with a continuous action of $\pi_1^{\text{\'et}}(Z,z)$. Under this equivalence, a finite \'etale cover $f:Y \to Z$ is sent to the finite set $f^{-1}(z)$ equipped with its action of $\pi_1^{\text{\'et}}(Z,z)$. In particular, the set of connected components of $Y$ is in bijection with the set of orbits of $\pi_1^{\text{\'et}}(Z,z)$ on $f^{-1}(z)$.

If $f:Y \to Z$ is a countably indexed inverse limit of finite \'etale covers $f_i:Y_i \to Z$ with surjective transition maps, then we can associate to $f$ the profinite set 
\begin{align}
    f^{-1}(z) = \varprojlim_i f_i^{-1}(z),
\end{align}
equipped with its natural continuous action of $\pi_1^{\text{\'et}}(Z,z)$. By Lemma \ref{Lem:InverseLimitsConnected} it follows that the profinite set of orbits of $\pi_1^{\text{\'et}}(Z,z)$ on $f^{-1}(z)$ is homeomorphic to the topological space of connected components of $Y$.

\subsubsection{} \label{Sec:InfiniteGalois} Now let $Z$ be a smooth $G(\ql)$-stable locally closed subscheme of $\shg$, let $Z^{\circ} \subset Z$ be a connected component of $Z$, and let $z \in \zcirc(\ovfp)$. Let $\pi_{\ell}$ be as in \eqref{Def:PiEll} and write $\tildezl$ for the inverse image of $Z$ under $\pi_{\ell}$ as above; it is stable under the action of $G(\mathbb{Q}_{\ell})$ by assumption. Denote by $\tildezcl$ the inverse image of $\zcirc$ under $\pi_{\ell}$, then $\tildezcl \to \zcirc$ is a profinite \'etale $K_{\ell}$-torsor.

By the Galois theory for schemes discussed above, the cover $\pi_{\ell}:\tildezcl\to \zcirc$ corresponds to the profinite set $\pi_{\ell}^{-1}(z)$ equipped with its natural action of $\pi_1^{\text{\'et}}(Z^{\circ},z)$. In particular, the set of connected components of $\tildezcl$ corresponds to the set of orbits of $\pi_1^{\text{\'et}}(Z^{\circ},z)$ on $\pi_{\ell}^{-1}(z)$.

Choose an element $\tilde{z} \in \pi_{\ell}^{-1}(z)$. Then using the simply transitive action of $K_{\ell}$ on $\pi_{\ell}^{-1}(z)$ we can identify $\pi_{\ell}^{-1}(z)$ with $K_{\ell}$; under this identification the chosen element $\tilde{z}$ is send to $1 \in K_{\ell}$. This defines a continuous group homomorphism
\begin{align}
    \rho_{\ell}:\pi_1^{\text{\'et}}(Z^{\circ},z) \to K_{\ell},
\end{align}
whose conjugacy class does not depend on the choice of $\tilde{z}$. Let $y \in \pi_0(\tildezcl)$ be the connected component containing $\tilde{z}$. Then the stabiliser of $y$ in $K_{\ell}$ is equal to the image of $\rho_{\ell}$. 

Let $P_y \subset G(\mathbb{Q}_{\ell})$ be the stabiliser of $y$ in $G(\mathbb{Q}_{\ell})$. It is a closed topological subgroup by the continuity of the action and the fact that $\pi_0(\tildezl)$ is Hausdorff. Its intersection with $K_{\ell}$ gives us the stabiliser of $y$ in $K_{\ell}$. The action map gives us a continuous map
\begin{align}
    G(\ql)/P_y \to \pi_0(\tildezl) \\
    g \mapsto g \cdot y
\end{align}
with image the orbit $\operatorname{Orb}(y)$ of $y$. 
\begin{Lem} \label{Lem:OrbitClosed}
The orbit $\operatorname{Orb}(y)$ is open and closed inside of $\pi_0(\tildezl)$. Moreover the orbit map induces a homeomorphism $G(\ql)/P_y \simeq \operatorname{Orb}(y)$; in particular, $G(\mathbb{Q}_{\ell})/P_y$ is compact. 
\end{Lem}
\begin{proof}
The identification
\begin{align}
    \pi_0(\tildezl) / K_{\ell} \simeq \pi_0(Z).
\end{align}
tells us that there are finitely many $K_{\ell}$-orbits on $\pi_0(\tildezl)$, and that each of them is open and closed. The $G(\mathbb{Q}_{\ell})$-orbit of a point $y$ is then a union of finitely many $K_{\ell}$-orbits, and thus also open and closed. Lemma \ref{Lem:OrbitClosed} shows that $\operatorname{Orb}(y)$ is open and closed inside a second-countable profinite topological space. Therefore $\operatorname{Orb}(y)$ is profinite and second-countable. The result now follows from Lemma \ref{Lem:QuotientTopology}.
\end{proof}
Let $M$ be the image of $\rho_{\ell}$ and let $\mathbb{M}$ be the neutral component of its Zariski closure inside $G(\mathbb{Q}_{\ell})$. Let $\rho_{\ell,\mathrm{geom}}$ be the restriction of $\rho_{\ell}$ to 
\begin{align}
    \pi_1^{\text{\'et}}(Z^{\circ}_{\ovfp},z) \subset \pi_1^{\text{\'et}}(Z^{\circ},z),
\end{align}
let $M_{\mathrm{geom}}$ be its image and let $\mathbb{M}_{\mathrm{geom}}$ be the neutral component of its Zariski closure inside $G(\ql)$.
\begin{Lem} \label{Lem:Reductive}
The groups $\mathbb{M}$ and $\mathbb{M}_{\mathrm{geom}}$ are connected reductive groups over $\ql$.
\end{Lem}
\begin{proof}
There is a short exact sequence (e.g. by \cite[Prop. 3.2.7]{d2020monodromy})
\begin{align} \label{Eq:ShortExactSequenceGeometricArithmetic}
    1 \to \mathbb{M}_{\mathrm{geom}}' \to \mathbb{M} \to Q \to 1,
\end{align}
where $Q$ is a commutative algebraic group of multiplicative type and where $\mathbb{M}_{\mathrm{geom}}'$ is a closed subgroup of $\mathbb{M}$ with neutral component given by $\mathbb{M}_{\mathrm{geom}}$. In particular, it follows that $\mathbb{M}$ is reductive if $\mathbb{M}_{\mathrm{geom}}$ is reductive. The representation 
\begin{align}
    \pi_1^{\text{\'et}}(Z^{\circ},z) \to K_{\ell} \to G(\ql) \to \operatorname{GL}(V)(\mathbb{Q}_{\ell})
\end{align}
is the monodromy representation of the (rational) $\ell$-adic Tate module of the abelian scheme $\pi:A \to \shg$ coming from the Hodge embedding $\shg \to \shgsp$. This is Tate-module is an $\ell$-adic sheaf $\mathcal{F}_0$ on $Z^{\circ}$ which is pure of weight one. Then \cite[Thm. 3.4.1.(iii)]{WeilII} tells us that the basechange $\mathcal{F}$ of $\mathcal{F}_0$ to $Z^{\circ}_{\ovfp}$ is semi-simple. This base change corresponds to the composition of $\rho_{\ell,\mathrm{geom}}$ with $K_{\ell} \to G(\ql) \to \operatorname{GL}(V)(\ql)$. Now \cite[Cor. 1.3.9]{WeilII} tells us that $\mathbb{M}_{\mathrm{geom}}$ is a semi-simple algebraic group, and thus that it is reductive. \smallskip
\end{proof}
\subsubsection{}
Let $\mathbb{N}$ be the normaliser of $\mathbb{M}$ in $G_{\ql}$ and let $\mathbb{N}^{\circ}$ be its neutral component. The group $\mathbb{N}^{\circ}$ is a connected reductive group because we are working with reductive groups in characteristic zero by e.g. \cite[Prop. A.8.12]{ConradGabberPrasad}.
\begin{Lem}
The group $P_y$ is contained in $\mathbb{N}$.
\end{Lem}
\begin{proof}
Let $\gamma \in P_y$, then we want to show that $\gamma$ normalises $\mathbb{M}$. If $V$ is a compact open subgroup of $G(\ql)$ contained in $K_{\ell}$, then $V \cap P_y \subset M$. Moreover for every $\gamma \in P_y$ we can find an open subgroup $U \subset G(\ql)$ such that $\gamma U \gamma^{-1} \subset  V$. For example, we can just take the intersection of $V$ with $\gamma V \gamma^{-1}$. For such $U$ the open subgroup $M \cap U$ of $M$ satisfies $\gamma (M \cap U) \gamma^{-1} \subset M$. 

Since conjugation by $\gamma$ is a homeomorphism in the Zariski topology, we see that the Zariski closure of $M \cap U$ is moved under conjugation by $\gamma$ into the Zariski closure of $M$. But since $M \cap U$ is an open subgroup of $M$ it is also a closed subgroup and thus compact and thus of finite index in $M$. This means that the Zariski closure of $M \cap U$ and the Zariski closure of $M$ have the same identity component, both of which are equal to $\mathbb{M}$. Since conjugation preserves $1$, this mean it sends $\mathbb{M}$ to $\mathbb{M}$.
\end{proof}
\begin{Cor} \label{Cor:Normal}
The Zariski closure $\mathbb{M}$ of $M$ is a normal subgroup of $G_{\ql}$.
\end{Cor}
\begin{proof}
The group $G/\mathbb{N}(\ql)$ is compact, because it is a quotient of $G(\ql)/P_y$ which is compact. Since $\mathbb{N}^{\circ}(\ql)$ is finite index in $\mathbb{N}(\ql)$, it follows that $G(\ql)/\mathbb{N}^{\circ}(\ql)$ is also compact. Since $\mathbb{N}^{\circ}$ is connected it follows from \cite[Prop. 8.4, Prop. 9.3]{BorelTits} that it contains a parabolic subgroup of $G_{\ql}$ and because it is reductive it follows that $\mathbb{N}^{\circ}=G_{\ql}$. Therefore $\mathbb{N}^{\circ}=\mathbb{N}$ and we find that $\mathbb{M}$ is a normal subgroup of $G_{\ql}$.
\end{proof}
\subsection{Arithmetic monodromy groups II} \label{Sec:ArithmeticMonodromyII} So far we have not excluded the possibility that $\mathbb{M}$ is contained in the center of $G_{\ql}$. In fact, this happens when $Z$ is the supersingular locus inside the modular curve. Thus we will need additional assumptions on $Z$ to prove that $\mathbb{M}$ is not central. 

We will show, using the results of \cite{KMPS}, that if $Z$ contains a point $x \in Z(\mathbb{F}_{q^m})$ not contained in the smallest Newton stratum, then the image of $\operatorname{Frob}_x$ under $\rho_{\ell}$ is noncentral. If $G_{\ql}^{\mathrm{ad}}$ were a simple group over $\ql$, then this would force $\mathbb{M}$ to contain $G^{\mathrm{der}}_{\ql}$. However $G^{\mathrm{ad}}$ is generally not a simple group over $\mathbb{Q}$, and even if it were simple then there would generally be no primes $\ell$ where $G_{\ql}^{\mathrm{ad}}$ is simple. To deal with these issues, we will make use of Hypothesis \ref{Hyp:Levi}.

\subsubsection{} Recall that for a point $x \in Z^{\circ}(\mathbb{F}_q^m)$, there is a Frobenius element
\begin{align}
    \operatorname{Frob}_x \in \pi_1^{\text{\'et}}(Z^{\circ},z),
\end{align}
whose image under $\rho_{\ell}$ is the element $\gamma_{x,m,\ell} \in G(\ql)$ from Section \ref{Sec:Centralisers}. If $m$ is sufficiently divisible, then its centraliser is equal to the group $I_{x,\ell}$. 

\subsubsection{} \label{Sec:ProductDecompositionKottwitzSet} The decomposition $G^{\mathrm{ad}}=\prod_{i=1}^n G_i$ of \eqref{eq:ProductDecomposition} induces maps 
\begin{align} \label{eq:AdmissibleAdjoint}
    B(G_{\qp}) \to B(G^{\mathrm{ad}}_{\qp}) \to \prod_{i=1}^n B(G_{i,\qp}).
\end{align}
For an element $[b] \in B(G_{\qp})$ we will write $[b_i]$ for its image in $B(G_{i,\qp})$ under this map. Recall \cite[Def. 5.3.2]{KretShin} that an element $[b] \in \bgmu$ is called \emph{$\mathbb{Q}$-non-basic} if $[b_i]$ is non-basic for all $i$. A Newton stratum $\shgb$ is called \emph{$\mathbb{Q}$-non-basic} if $[b]$ is $\mathbb{Q}$-non-basic.
\begin{Prop} \label{Prop:NonBasicNonZero}
Let $x \in Z^{\circ}(\mathbb{F}_q^m)$ for some sufficiently divisible $m$ and let $[b]=[b_x] \in \bgmu$. Assume that Hypothesis \ref{Hyp:Levi} holds. If $[b_{i}]$ is non-basic, then the image of $\rho_{\ell}(\operatorname{Frob}_{x,m})$ under
\begin{align}
    G(\mathbb{Q}_{\ell}) \to G^{\mathrm{ad}}(\mathbb{Q}_{\ell}) \to G_i(\mathbb{Q}_{\ell})
\end{align}
is nontrivial. Moreover, the image of $\rho_{\ell}(\operatorname{Frob}_{x,m})$ in each simple factor of $G_{i,\ql}$ over $\ql$ is nontrivial.
\end{Prop}
\begin{proof}
Let $m$ be sufficiently divisible and let $\gamma_{x,m} \in G(\qbar)$ be the element guaranteed to exist by Hypothesis \ref{Hyp:Levi}. Let $G^{\ast}$ denote the quasi-split inner form of $G$ over $\mathbb{Q}$ and let $\Psi:G \otimes \qbar \to G^{\ast} \otimes \qbar$ be an inner twisting. Then by Lemma \ref{Lem:HypothesisTrue} there is an element $\gamma_{x,m}' \in G^{\ast}(\mathbb{Q})$ that is conjugate to $\Psi(\gamma_{x,m})$ in $G^{\ast}(\qbar)$. We will write $I_x' \subset G^{\ast}$ for the centraliser of $\gamma_{x,m}'$. Recall that by Hypothesis \ref{Hyp:Levi} for all $\ell$ (including $\ell=p$) the element $\Psi^{-1}(\gamma_{x,m})$ is conjugate to $\gamma_{x,m,\ell}$ in $G(\qlbar$). 

By the classification of adjoint algebraic groups we can find number fields\footnote{By the classification of Shimura varieties of abelian type in \cite[Appendix B]{Milne}, each $F_i$ is totally real.} $F_1, \cdots, F_n$ and absolutely simple adjoint algebraic groups $H_i$ over $F_i$ for each $i=1, \cdots, n$ such that
\begin{align}
    G^{\mathrm{ad}} = \prod_{i=1}^n \operatorname{Res}_{F_i/\mathbb{Q}} H_i = \prod_{i=1}^n G_i.
\end{align}
We have a similar decomposition for $G^{\ast, \mathrm{ad}}$ with $H_i$ replaced by its quasi-split inner form $H_i^{\ast}$ and we will write $G_i^{\ast}$ for the restriction of scalars from $F_i$ to $\mathbb{Q}$ of $H_i^{\ast}$.

Let $\gamma_{x,m,i} \in G_i^{\ast}(\mathbb{Q})=H_i^{\ast}(F_i)$ be the image of $\gamma_{x,m}$ and let $C_{x,i} \subset H_i^{\ast}$ be its centraliser. Then there is a product decomposition
\begin{align}
    I_x'/Z_G \simeq \prod_{i=1}^n \operatorname{Res}_{F_i/\mathbb{Q}} C_{x,i} = \prod_{i=1}^n I_{x,i}'.
\end{align}
Let $b=b_x \in G(\qpbr)$ for sufficiently divisible $m$ be as in Section \ref{Sec:NewtonStratification}. Then Lemma \ref{eq:ProductDecomposition} shows that there is a product decomposition $J_b/Z_{G,\qp} \simeq \prod_{i=1}^n J_{b_i}$, where each $J_{b,i}$ is the twisted centraliser of the image $b_i$ of $b$ in $G_i(\qpbr)$. Moreover the natural inclusion $I_{x,p} \to J_b$ induces closed immersions $I_{x,i,p} \to J_{b_i}$. As in \cite[Sec. 1.1.4]{KMPS}, there is an inclusion $J_{b,i,\qpbr} \to G_{i,\qpbr}$, identifying its image with the centraliser $M_{\nu_{b_i}}$ of the fractional cocharacter $\nu_{b_i}$ of $G_i \otimes \qpbr$ attached to $b_i$. If $[b_i]$ is not basic, then $\nu_{b_i}$ is not central and so $\operatorname{Dim} J_{b,i} = \operatorname{Dim} M_{\nu_{b_i}} < \operatorname{Dim} G_i$ and so $\operatorname{Dim} I_{x,i,p} < \operatorname{Dim} G_i$. \smallskip

The subgroup $$\Psi^{-1}(I_{x,\qpbar}') \subset G_{\qpbar}$$ can be identified with the centraliser of $\gamma_{x,m,p}$ for $m$ sufficiently divisible, and it follows that $$\Psi^{-1}(I_{x,i,\qpbar}') \subset G_{i,\qpbar}$$ can be identified with the centraliser of the image of $\gamma_{x,m,p,i}$ in $G_{i,\qpbar}$. In particular, the group $I_{x,i,\qpbar}'$ is conjugate to $I_{x,i,p,\qpbar}$ and therefore of the same dimension.

The upshot of the above discussion is that $\operatorname{Dim} I_{x,i}'< \operatorname{Dim} G_i$ if $b_i$ is not basic. It follows that the inclusion $I_{x,i,\ql} \subset G_{i,\ql}$ is not an equality for $\ell \not=p$ and thus that the image of $\rho_{\ell}(\operatorname{Frob}_{x,m})$ in $G_i(\ql)$ is nontrivial.

To deduce the last statement of Proposition \ref{Prop:NonBasicNonZero}, we note that it suffices to show that the image of $\gamma_{x,m,i}$ in every simple factor of $G_{i,\ql}$ is noncentral. For this, we fix $i$ and a prime number $\ell$.

Then we can write $G^{\ast}_{i,\ql}$ as a product indexed by primes $\mathfrak{p}$ of $F_i$ dividing $\ell$
\begin{align}
    G^{\ast}_{i,\ql} &= \prod_{\mathfrak{p} \mid \ell} \operatorname{Res}_{F_{i,\mathfrak{p}}/\ql} H^{\ast}_{i, F_{i,\mathfrak{p}}}.
\end{align}
The element $\gamma_{x,m,i}$ is noncentral in $H_i^{\ast}(F_i)$ and thus also noncentral in $H_i^{\ast}(F_{i,\mathfrak{p}})$ for all primes $\mathfrak{p}$ of $F_i$ dividing $\ell$, and thus we are done. 
\end{proof}
\begin{Rem}
When $b_i$ is basic then the inclusion $I_{x,i,\qp} \subset J_{b,i}$ should be an equality and the image of $\rho_{\ell}(\operatorname{Frob}_{x,m})$ in $G_i(\ql)$ should be trivial for all $\ell \not=p$. This is true when $K_p$ is very special, see the proof of \cite[Prop. 5.2.10]{HeZhouZhu}.
\end{Rem}
We now state and prove our main arithmetic monodromy theorem.
\begin{Thm} \label{Thm:ArithmeticMonodromy}
Let $Z \subset \shg$ be a smooth $G(\ql)$-stable locally closed subvariety. Let $\zcirc \subset Z$ be a connected component and choose a point $z \in \zcirc(\ovfp)$. Let $\mathbb{M}$ be the neutral component of the Zariski closure of the image of
\begin{align}
    \rho_{\ell}:\pi_1^{\text{\'et}}(Z^{\circ},z) \to K_{\ell} \to G(\mathbb{Q}_{\ell}).
\end{align}
Assume that Hypothesis \ref{Hyp:Levi} holds. Then $\mathbb{M}$ is a normal subgroup of $G_{\mathbb{Q}_{\ell}}$ surjecting onto $G_{i,\mathbb{Q}_{\ell}}$ for all $i$ such that there is a point $x \in \zcirc(\ovfp)$ with $b_{x,i}$ non-basic.
\end{Thm}
When $(G,X)=(\mathcal{G}_V,\mathcal{H}_V)$, then this is closely related to \cite[Cor. 3.5]{ChaiElladicmonodromy}.
\begin{proof}
Corollary \ref{Cor:Normal} proves that $\mathbb{M} \subset G_{\ql}$ is a normal subgroup. For $x \in \zcirc(\ovfp)$ we have the Frobenius element $\rho_{\ell}(\operatorname{Frob}_{x,m})=\gamma_{x,m,\ell} \in M$, for all sufficiently divisible $m$, where we recall that $M$ is the image of $\rho_{\ell}$. Thus $\gamma_{x,m,\ell}$ is contained in the Zariski closure of $M$ and thus after replacing $m$ by a power we may assume that $\gamma_{x,m,\ell} \in \mathbb{M}$, the neutral component of the Zariski closure of $M$. \smallskip

If $x \in \zcirc(\ovfp)$ is a point with $b_{x,i}$ non-basic, then Proposition \ref{Prop:NonBasicNonZero} tells us that the image of $\gamma_{x,m,\ell}$ in $G_{i,\ql}$ is nonzero and thus the image of $\mathbb{M}$ in $G_{i,\ql}$ is nonzero. This image is moreover normal, so to show that it is equal to $G_{i,\ql}$ it suffices to show that it maps nontrivially to every simple factor of $G_{i,\ql}$ over $\ql$. But this follows from the last line of the statement of Proposition \ref{Prop:NonBasicNonZero}.
\end{proof}
\begin{Cor} \label{Cor:GeometricMonodromy}
Let $Z \subset \shgov$ be a smooth $G(\ql)$-stable locally closed subscheme as before, let $\zcirc \subset Z$ be a connected component and fix $z \in \zcirc(\ovfp)$. Let $\mathbb{M}_{\mathrm{geom}}$ be the neutral component of the Zariski closure of the image of
\begin{align}
    \pi_1^{\text{\'et}}(Z^{\circ}_{\ovfp},z) \to K_{\ell} \to G(\mathbb{Q}_{\ell}).
\end{align}
Assume that Hypothesis \ref{Hyp:Levi} holds. Then $\mathbb{M}_{\mathrm{geom}}$ is a normal subgroup of $G_{\mathbb{Q}_{\ell}}$ surjecting onto $G_{i,\mathbb{Q}_{\ell}}$ for all $i$ such that there is a point $x \in \zcirc(\ovfp)$ with $b_{x,i}$ non-basic. If we can find such a point for all $i$, then $\mathbb{M}_{\mathrm{geom}}=G_{\ql}^{\mathrm{der}}$.
\end{Cor}
\begin{proof}
The subscheme $Z$ is defined over a finite extension of $k$, and so we can speak of its arithmetic monodromy group $\mathbb{M}$. Theorem \ref{Thm:ArithmeticMonodromy} tells us that $\mathbb{M}$ surjects onto $G_{i,\mathbb{Q}_{\ell}}$ for all $i$ such that there is a point $x \in \zcirc(\ovfp)$ with $b_{x,i}$ non-basic. We now claim that  $\mathbb{M}_{\mathrm{geom}}$ and $\mathbb{M}$ have the same image in  $G_{i,\mathbb{Q}_{\ell}}$ for all such $i$. \smallskip 

It follows from the short exact sequence \eqref{Eq:ShortExactSequenceGeometricArithmetic} that $\mathbb{M}_{\mathrm{geom}} \subset \mathbb{M}$ is a normal subgroup with abelian cokernel. Let $\mathbb{M}_{\mathrm{geom},i}$ be the image of $\mathbb{M}_{\mathrm{geom}}$ in $G_{i,\ql}$ and let $\mathbb{M}_i$ be the image of $\mathbb{M}$ in $G_{i,\ql}$. Then $\mathbb{M}_{\mathrm{geom},i} \subset \mathbb{M}_{i}$ is a normal subgroup with abelian cokernel. Given an integer $i$ with $1 \le i \le n$ such that there is a point $x \in \zcirc(\ovfp)$ with $b_{x,i}$ non-basic, then $\mathbb{M}_{i} = G_{i,\mathbb{Q}_{\ell}}$ and therefore $\mathbb{M}_{i}$ has no nontrivial abelian quotients. Thus it follows that the inclusion $\mathbb{M}_{\mathrm{geom},i} \subset \mathbb{M}_{i}$ is an equality.

If we can find a point $x$ with $b_{x,i}$ non-basic for all $i$, then $\mathbb{M}_{\mathrm{geom}}$ surjects onto $G^{\mathrm{ad}}_{\ql}$ and is moreover semi-simple by \cite[Cor. 1.3.7]{WeilII}. It must therefore be equal to $G_{\ql}^{\mathrm{der}}$.
\end{proof}
\subsection{\texorpdfstring{$p$}{p}-adic monodromy groups} \label{Sec:PAdicMonodromy}
In this subsection we record a consequence of Theorem \ref{Thm:ArithmeticMonodromy} in combination with the main results of \cites{d2020monodromy, DAddezioII}.

Recall the following notions from \cite[Sec. 2.2]{d2020monodromy}. Write $F\mathrm{-Isoc}(S)$ for the $\qp$-linear Tannakian category of $F$-isocrystals over a smooth finite type scheme $S$ over $\ovfp$ and write $F\mathrm{-Isoc}^{\dagger}(S)$ for the $\qp$-linear Tannakian category of overconvergent $F$-isocrystals over $S$. There is a natural fully faithful embedding $F\mathrm{-Isoc}^{\dagger}(S) \subset F\mathrm{-Isoc}(S)$ which sends an overconvergent $F$-isocrystal $\mathcal{M}^{\dagger}$ to the underlying $F$-isocrystal $\mathcal{M}$. Similarly we write $\mathrm{Isoc}^{\dagger}(S)$ and $\mathrm{Isoc}(S)$ for the $\qpbr$-linear category of (overconvergent) isocrystals over $S$. There are natural faithful forgetful functors from (overconvergent) $F$-isocrystals to (overconvergent) isocrystals.

\subsubsection{} The morphism $\shg \to \shgsp$ gives us an abelian scheme $\pi:\mathcal{A} \to \shg$ and we consider the $F$-isocrystal
\begin{align}
    \mathcal{M}:=R^1  \pi_{\ast} \mathcal{O}_{\mathrm{cris},A}[1/p],
\end{align}
which is overconvergent by \cite[Thm. 7]{Etesse}. Then \cite[Cor. 1.3.13]{KMPS} proves that there is an exact $\qp$-linear tensor functor (the \emph{$p$-adic realisation functor})
\begin{align} \label{Eq:FCrystalGStructure}
    \operatorname{Rel}_p:\operatorname{Rep}_{\qp} G \to F\mathrm{-Isoc}(\shg)
\end{align}
such that the representation $G_{\qp} \to \mathcal{G}_V \to \operatorname{GL}_V$ coming from the choice of Hodge embedding is sent to the $F$-isocrystal $\mathcal{M}$. 
\begin{Lem}
This morphism factors via an exact $\qp$-linear tensor functor
\begin{align} \label{Eq:FCrystalGStructureOC}
    \operatorname{Rel}_p:\operatorname{Rep}_{\qp} G \to F\mathrm{-Isoc}^{\dagger}(\shg),
\end{align}
which we will also denote by $\operatorname{Rel}_p$.
\end{Lem}
\begin{proof}
Recall that we have chosen finitely many tensors $s_{\alpha} \in V^{\otimes}$ cutting out $G$ inside $\operatorname{GL}_V$, which correspond to morphisms $s_{\alpha}:1 \to V^{\otimes}$ in $\operatorname{Rep}_{\qp} G$. Applying the tensor functor
\begin{align}
    \operatorname{Rel}_p:\operatorname{Rep}_{\qp} G \to F\mathrm{-Isoc}(\shg)
\end{align}
we get morphism of $F$-isocrystals
\begin{align} \label{Eq:ConvergentOC}
    s_{\alpha}:1 \to \mathcal{M}^{\otimes}.
\end{align}
Since $\mathcal{M}$ is overconvergent and the category of overconvergent isocrystals is stable under tensor products and duals and direct sums by \cite[Rem. 2.3.3.(iii)]{Berthelot}, it follows that $\mathcal{M}^{\otimes}$ is also overconvergent. Thus the morphisms \eqref{Eq:ConvergentOC} live in the full subcategory $F\mathrm{-Isoc}^{\dagger}(\shg)$ and by \cite[Lem. 4.1.8]{KisinShinZhu} we get a $\qp$-linear exact tensor functor
\begin{align}
    \operatorname{Rel}_p:\operatorname{Rep}_{\qp} G \to F\mathrm{-Isoc}^{\dagger}(\shg)
\end{align}
taking $V$ to $\mathcal{M}^{\dagger}$. 
\end{proof}
Given a smooth locally closed subscheme $Z \subset \shgov$ and a point $z \in Z(\ovfp)$, there are monodromy groups
\begin{align}
    \operatorname{Mon}(Z, \mathcal{M},z) \subset \operatorname{Mon}(Z, \mathcal{M}^{\dagger},z),
\end{align}
which are algebraic groups over $\qpbr$, see the introduction of \cite{DAddezioII}. They are defined to be the Tannakian groups corresponding to the smallest Tannakian subcategory of $\mathrm{Isoc}(Z)$ respectively $\mathrm{Isoc}^{\dagger}(Z)$ containing $\mathcal{M}$, via the fiber functor $\omega_z$
\begin{align}
    \omega_z:\mathrm{Isoc}(Z) \to \mathrm{Isoc}(\ovfp)= \operatorname{Vect}_{\qpbr}.
\end{align}
We will often omit the chosen point $z$ from the notation since the monodromy group does not depend on $z$ up to isomorphism. 

Applying Tannakian duality to \eqref{Eq:FCrystalGStructure} and \eqref{Eq:FCrystalGStructureOC} we get inclusions
\begin{align}
    \operatorname{Mon}(Z, \mathcal{M},z) \subset \operatorname{Mon}(Z, \mathcal{M}^{\dagger},z) \subset G_{\qpbr}.
\end{align}
\begin{Cor} \label{Cor:padicMonodromyI}
Let $Z \subset \shg$ be a smooth locally closed subscheme and assume that there is a prime $\ell \not=p$ such that $Z$ is $G(\ql)$-stable. Suppose that $\zcirc$ contains a point $x$ such that $b_{x,i}$ is non-basic for all $i$. If Hypothesis \ref{Hyp:Levi} holds, then there is an equality of subgroups of $G_{\qpbr}$
\begin{align}
    \operatorname{Mon}(\zcirc, \mathcal{M}^{\dagger}) = G^{\mathrm{der}}_{\qpbr}.
\end{align}
\end{Cor}
\begin{proof}
Let $\ell$ be as in the assumptions of the statement of the Corollary. Then it follows from Theorem \ref{Cor:GeometricMonodromy} that the $\ell$-adic monodromy group $\mathbb{M}_{\mathrm{geom}}$ of the abelian variety over $Z$ is equal to $G^{\mathrm{der}}_{\ql}$. It follows from \cite[Thm. 1.2.1]{d2020monodromy} (c.f. \cite{PalMonodromy}) that there is an isomorphism of algebraic groups
\begin{align}
    \operatorname{Mon}(\zcirc, \mathcal{M}^{\dagger}) \otimes \qpbar \simeq G^{\mathrm{der}} \otimes \qpbar.
\end{align}
Therefore $\operatorname{Mon}(\zcirc, \mathcal{M}^{\dagger}) \subset G \otimes \qpbr$ is a subgroup which is isomorphic to $G^{\mathrm{der}}$ over $\qpbar$. It follows that $\operatorname{Mon}(\zcirc, \mathcal{M}^{\dagger})$ is equal to its own derived subgroup and therefore contained in $G^{\mathrm{der}} \otimes \qpbr$. This inclusion has to be an isomorphism for dimensions reasons, because both groups are connected.
\end{proof}
From now on we will assume that $Z$ is contained in a single Newton stratum $\shgb$ of $\shg$. This means that for every representation $W$ of $G_{\qp}$ the Newton polygon of $\operatorname{Rel}_p(W)$ is constant. As explained in \cite[Sec. 2.2]{DAddezioII} (c.f. \cite{KatzSlope}), this means that $\operatorname{Rel}_p(W)$ admits a (unique) slope filtration $\operatorname{Rel}_p(W)_{\bullet}$. There is an induced slope filtration on $\omega(\operatorname{Rel}_p(W))=W \otimes \qpbr$, which gives a fractional cocharacter $\lambda_W$ of $\operatorname{GL}_{W,\qpbr}$. Since this construction is functorial in $W$, it defines a fractional cocharacter $\lambda$ of $G_{\qpbr}$. 

Recall from \cite[Sec. 1.1.2]{KMPS} that associated to $[b] \in B(G_{\qp})$ there is a conjugacy class of fractional cocharacters $\{\nu_{[b]}\}$ of $G_{\qpbr}$ that is defined over $G_{\qp}$.\footnote{The conjugacy class $\{\nu_{[b]}\}$ is defined over $\qp$, but it is not necessarily true that $\{\nu_{[b]}\}$ has a representative defined over $\qp$ unless $G_{\qp}$ is quasi-split}
\begin{Lem} \label{Lem:WhichFractionalCocharacter}
The conjugacy class of $\lambda$ agrees with $\{\nu_{[b]}\}$.
\end{Lem}
\begin{proof}
To identify the conjugacy class of the cocharacter $\lambda$, it suffices to identify the conjugacy class of $\lambda$ composed with all representations of $G_{\qp}$. Then the lemma comes down proving that the (conjugacy class of the) Newton cocharacter of the isocrystal $\operatorname{Rel}_p(W)$ corresponding to a representation $W$ of $G_{\qp}$ given by $r:G_{\qp} \to \operatorname{GL}(W)$, is given by $\{r \circ \nu_{[b]}\}$. But this is true by construction of the Newton cocharacter $\nu_{[b]}$ associated to $[b]$, see \cite[Sec. 4]{Kottwitz1}.
\end{proof}
Under our assumption that $Z$ is contained in a single Newton stratum $\shgb$ of $\shg$ we note that the monodromy group
\begin{align}
    \operatorname{Mon}(\zcirc, \mathcal{M}) \subset G_{\qpbr}
\end{align}
of a connected component $\zcirc$ of $Z$ is contained in the parabolic subgroup $P(\lambda)$ associated to $\lambda$, as explained in \cite[Sec. 4.1]{DAddezioII}. 
\begin{Cor} \label{Cor:padicMonodromyII}
Let $Z \subset \shg$ be a smooth locally closed subscheme and assume that there is a prime $\ell \not=p$ such that $Z$ is $G(\ql)$-stable. Let $Z^{\circ}$ be a connected component of $Z$ and suppose that $\zcirc$ is contained in a single $\mathbb{Q}$-non-basic Newton stratum $\shgb$. If Hypothesis \ref{Hyp:Levi} holds, then the $p$-adic monodromy group
\begin{align}
    \operatorname{Mon}(\zcirc, \mathcal{M}) \subset \operatorname{Mon}(\zcirc, \mathcal{M}^{\dagger})=G^{\mathrm{der}}_{\qpbr} \subset G_{\qpbr}
\end{align}
is equal to the intersection
\begin{align}
    G^{\mathrm{der}}_{\qpbr} \cap P(\lambda).
\end{align}
In particular, the unipotent radical of $\operatorname{Mon}(\zcirc, \mathcal{M})$ is isomorphic to the unipotent radical of the parabolic subgroup $P_{\nu_{[b]}}$ of $G_{\qpbr}$, for some representative $\nu_{[b]}$ of $\{\nu_{[b]}\}$.
\end{Cor}
\begin{proof}
The first assertion is a direct consequence of Corollary \ref{Cor:padicMonodromyI} and \cite[Thm 1.1.1]{DAddezioII}. The second assertion follows from Lemma \ref{Lem:WhichFractionalCocharacter}
\end{proof}
\subsection{Irreducible components of Hecke stable subvarieties} \label{Sec:IrrComp}
In this section we will study irreducible components of Hecke stable subvarieties and prove results in the style of \cite[Prop. 4.5.4]{ChaiElladicmonodromy}.\footnote{Our results do not literally generalise Chai's results because he works with $\operatorname{Sp}_{2g}(\afp)$-stable subvarieties while we work with $\operatorname{GSp}_{2g}(\afp)$-stable subvarieties.}The results proved in this section will not be used in the rest of this article, but they will be used to prove irreducibility results for EKOR strata in an upcoming version of \cite{HoftenZhou}. 

Let $\rho:\gsc \to \gder$ be the simply connected cover of the derived group $\gder$ of $G$ and note that $\rho$ induces an action of $\gsc(\afp)$ on $\shginf$. From now on we will need another assumption:
\begin{Hyp} \label{Hyp:LocInt}
For each finite extension $F$ of the reflex field $E$ and any place $w$ of $F$ extending $v$, the natural maps
\begin{align}
    \pi_0(\mathbf{Sh}_{K^pK_p}(G,X) \otimes_E F) \leftarrow \pi_0(\mathscr{S}_{K^pK_p} \otimes_{\mathcal{O}_{E,(v)}} \mathcal{O}_{F,(w)}) \to \pi_0(\shg \otimes_k k(w))
\end{align}
are isomorphisms.
\end{Hyp}
\begin{Lem}
Hypothesis \ref{Hyp:LocInt} holds if $\shg$ has geometrically integral connected components. 
\end{Lem}
\begin{proof}
This is \cite[Cor. 4.1.11]{MP}.
\end{proof}
\begin{Rem}
The variety $\shg$ has geometrically integral connected components if $K_p$ is hyperspecial because then the integral models are smooth by work of Kisin \cite{KisinModels}. More generally the Kisin--Pappas integral models of \cite{KisinPappas} have geometrically integral connected components $K_p$ is very special.
\end{Rem}
\subsubsection{Connected components} The following result is well known.
\begin{Lem} \label{Lem:ConnectedComponentsAreTorsorsComplexI}
Let $Y_{\infty}$ be a connected component of the scheme
\begin{align} \label{Eq:InverseLimit}
    \varprojlim_{U \subset G(\af)} \mathbf{Sh}_{U,\mathbb{C}}(G,X).
\end{align}
Then $Y_{\infty}$ is stable under the action of $\gsc(\af)$.
\end{Lem}
\begin{proof}
This is direct consequence of the description of connected component of Shimura varieties and strong approximation for $\gsc(\mathbb{Q})$, see \cite[Sec. 5.5.1, Lem. 5.5.4]{KisinShinZhu}.
\end{proof}
\begin{Cor} \label{Cor:ConnectedComponentsAreTorsorsComplexII}
Let $Y_{\infty}$ be a connected component of 
\begin{align}
    \varprojlim_{U^p \subset G(\afp)} \mathbf{Sh}_{U^pK_p, \mathbb{C}}(G,X).
\end{align}
Then $Y_{\infty}$ is stable under the action of $\gsc(\afp)$.
\end{Cor}
\begin{proof}
We consider 
\begin{align}
    \varprojlim_{U \subset G(\afp)} \mathbf{Sh}_{U}(G,X) \to \varprojlim_{U^p \subset G(\afp)} \mathbf{Sh}_{U^pK_p}(G,X) \to \mathbf{Sh}_{K^p K_p}(G,X)
\end{align}
and we let $Y_{\infty}'$ be a connected component of the left hand side mapping to $Y_{\infty}$. Then $Y_{\infty}'$ is stable under the action of $\gsc(\af)$ and thus $Y_{\infty}$ is stable under the action of $\gsc(\afp)$. 
\end{proof}
\begin{Lem} \label{Lem:ConnectedComponentsAreTorsors}
Suppose that Hypothesis \ref{Hyp:LocInt} holds and let $Y_{\infty} \subset \shginfov$ be a connected component. Then $Y_{\infty}$ is stable under the action of $\gsc(\afp)$. 
\end{Lem}
\begin{proof}
It suffices to prove this for Shimura varieties over $\mathbb{C}$, because the connected component are defined over an algebraic closure $\overline{E}$ of the reflex field $E$ and the result can be transported to the special fiber using Hypothesis \ref{Hyp:LocInt}. The result over $\mathbb{C}$ is proved in Corollary \ref{Cor:ConnectedComponentsAreTorsorsComplexII}.
\end{proof}
\subsubsection{} Let $Z \subset \shg$ be a $G(\afp)$-stable locally closed subscheme with inverse image $\tilde{Z} \subset \shginf$. A finite \'etale cover $X \to Z$ is called $G(\afp)$-equivariant if $\tilde{X}:=\tilde{Z} \times_Z X$ has an action of $G(\afp)$ making the natural map $\tilde{X} \to \tilde{Z}$ equivariant for the action of $G(\afp)$. If Hypothesis \ref{Hyp:LocInt} is satisfied, then $\gsc(\afp)$ acts on the fibers of
\begin{align}
    \pi_0(\tilde{X}) \to \pi_0(\shginfov).
\end{align}
\begin{Lem} \label{Lem:ClosedOrbitsII}
If Hypothesis \ref{Hyp:LocInt} holds, then $\gsc(\afp)$ acts continuously on $\pi_0(\tilde{X})$.
\end{Lem}
\begin{proof}
The assumption that $X \to Z$ is finite \'etale implies that $\pi_0(\tilde{X}) \to \pi_0(\tilde{Z})$ is a finite map with discrete fibers, and therefore the action of $G(\afp)$ on $\pi_0(\tilde{X})$ is continuous because the action on $\pi_0(\tilde{Z})$ is, see Corollary \ref{Cor:Continuity}.
\end{proof}

Let $\Sigma$ be a finite set of places of $\mathbb{Q}$ containing the infinite place, containing $p$, and containing all places $\ell$ where $G_{\ell}^{\mathrm{ad}}$ has a compact factor. From now on we will work with $G(\afp)$-stable subvarieties $Z$ defined over $\ovfp$ and with geometric monodromy groups.
\begin{Thm} \label{Thm:Irreducibility}
Let $X \to Z$ be a $G(\afp)$-equivariant finite \'etale cover of a smooth $G(\afp)$-stable locally closed subscheme $Z \subset \shgov$ and suppose that each connected component of $Z$ intersects a $\mathbb{Q}$-non-basic Newton stratum. If Hypotheses \ref{Hyp:Levi} and \ref{Hyp:LocInt} hold, then $\gsc(\afs)$ acts trivially on the fibers of
\begin{align} \label{eq:MaponPinaught}
    \pi_0(\tilde{X}) \to \pi_0(\shginfov).
\end{align}
\end{Thm}
For a prime $\ell \not \in \Sigma$ we will write $K_{\ell}$ for the image of $K^p \to G(\ql)$ and $\pi_{\ell}:\operatorname{Sh}_{G,K_pK^{p},\ell,\ovfp} \to \shgov$ for the induced $K_{\ell}$-torsor over $\shgov$.
\begin{Lem} \label{Lem:Lem:ConnectedComponentsAreTorsorsII}
Suppose that Hypothesis \ref{Hyp:LocInt} holds and let $Y_{\infty} \subset \operatorname{Sh}_{G,K_pK^{p},\ell,\ovfp}$ be a connected component with image $Y \subset \shgov$. Then $Y_{\infty} \to Y$ is a torsor for a compact open subgroup of $\gder(\ql)$.
\end{Lem}
\begin{proof}
It follows from profinite Galois theory for schemes, see Section \ref{Sec:InfiniteGalois}, that the stabiliser $K_{\infty}$ of $Y_{\infty}$ in $G(\ql)$ can be identified with the image of
\begin{align}
    \pi_1^{\text{\'et}}(Y,y) \to G(\ql)
\end{align}
for some point $y \in Y(\ovfp)$. If we apply Corollary \ref{Cor:GeometricMonodromy} and Lemma \ref{Lem:DenseCompactOpen} to $Z=\shg$, it follows that this image contains a compact open subgroup of $\gder(\ql)$.
\end{proof}
\begin{proof}[Proof of Theorem \ref{Thm:Irreducibility}]
We write $\tildezl \to Z$ for the induced $K_{\ell}$ torsor and $X_{\ell} \to \tildezl$ for $\tildezl \times_Z X$. Then the action of $G(\afp)$ on $\tilde{X}$ and $\tilde{Z}$ induces an action of $G(\ql)$ on $X_{\ell}$, and it suffices to show that $\gsc(\ql)$ acts trivially on the fibers of
\begin{align}
     a_{\ell}:\pi_0(X_{\ell}) \to \pi_0\left(\operatorname{Sh}_{G,K_pK^{p},\ell,\ovfp}\right)
\end{align}
for all $\ell \not \in \Sigma$. \smallskip

Let $x \in \pi_0(X_{\ell})$ and let $\zcirc$ be a connected component of $Z$ containing the image of $x$. Moreover let $\tildezcl \subset \zl$ be the inverse image of $\zcirc$. Fix a point $z \in \zcirc(\ovfp)$, then Hypothesis \ref{Hyp:Levi} and Corollary \ref{Cor:GeometricMonodromy} tell us that the image of
\begin{align}
    \rho_{\ell}:\pi_1^{\text{\'et}}(\zcirc,z) \to K_{\ell}
\end{align}
is a compact subgroup $M_{\mathrm{geom},\ell}$ whose Zariski closure $\mathbb{M}_{\mathrm{geom},\ell}$ has neutral component equal to $G_{\ql}^{\mathrm{der}}$. It follows from Lemma \ref{Lem:DenseCompactOpen} that
the image of $\rho_{\ell}$ contains a compact open subgroup $V_{\ell} \subset G^{\mathrm{der}}(\ql)$. The upshot of this discussion is that the stabiliser in $G(\ql)$ of a connected component of $Z_{\ell}$ contains a compact open subgroup of $\gder(\ql)$ and this implies that the stabiliser in $G(\ql)$ of $x$ contains a compact open subgroup of $\gder(\ql)$. \medskip

Let $Y_{\infty}$ be a connected component of $\operatorname{Sh}_{G,K_pK^{p},\ell,\ovfp}$ such that the image $Y$ of $Y_{\infty}$ in $\shg$ contains $\zcirc$. Then it follows from Hypothesis \ref{Hyp:LocInt} and Lemma \ref{Lem:Lem:ConnectedComponentsAreTorsorsII} that $Y_{\infty} \to Y$ is a pro-\'etale torsor for a compact open subgroup $U_{\ell} \subset \gder$, and from Lemma \ref{Lem:ConnectedComponentsAreTorsors} that $Y_{\infty}$ is stable under the action of $\gsc(\ql)$. 

We will write $X_{\infty} \subset X_{\ell}$ for the inverse image of $Y_{\infty}$ in $X_{\ell}$ and let $X' \subset X$ be its image. Note that $x \in \pi_0(X_{\infty})$ by construction. Then $X_{\infty} \to X'$ is pro-\'etale $U_{\ell}$ torsor and $X_{\infty}$ is stable under the action of $\gsc(\ql)$. This action is moreover continuous by Lemma \ref{Lem:ClosedOrbitsII} and the inclusion
\begin{align}
    \pi_0(X_{\infty}) \subset \pi_0(X_{\ell})
\end{align}
is closed since $\{ Y_{\infty} \} \subset \pi_0\left(\operatorname{Sh}_{G,K_pK^{p},\ell,\ovfp}\right)$ is closed. In particular, the topological space $\pi_0(X_{\infty})$ is compact Hausdorff. \smallskip

Let $U_{\ell}'$ be the inverse image of $U_{\ell}$ in $\gsc(\ql)$. Then the quotient
\begin{align}
    U_{\ell}' \backslash \pi_0(X_{\infty})
\end{align}
is finite since $U_{\ell} \backslash \pi_0(X_{\infty}) = \pi_0(X')$
is finite and since the map $U_{\ell}' \to U_{\ell}$ has finite cokernel. This means that there are finitely many (open and closed) $U_{\ell}'$ orbits on $\pi_0(X_{\infty})$. Therefore the $\gsc(\ql)$ orbit of $x$ on $\pi_0(X_{\infty})$ is a union of finite many $U_{\ell}'$-orbits and thus closed; in particular it is compact Hausdorff. It then follows from Lemma \ref{Lem:QuotientTopology} that the $\gsc(\ql)$ orbit of $x$ is homeomorphic to $\gsc(\ql)/P_x$, where $P_x \subset \gsc(\ql)$ is the stabiliser of $x$. In particular, it follows that $\gsc(\ql)/P_x$ is compact. \smallskip

The group $P_x$ contains a compact open subgroup of $\gsc(\ql)$ because the stabiliser of $x$ in $G(\ql)$ contains a compact open subgroup of $\gder(\ql)$ and $\gsc(\ql) \to \gder(\ql)$ has finite fibers. This implies that $\gsc(\ql)/P_x$ has the discrete topology, and we conclude that $\gsc(\ql)/P_x$ is a finite set or equivalently $P_x$ is a finite index subgroup. The assumption that $\gsc_{\ql}$ has no compact factors implies, by \cite[Thm. 7.1, Thm. 7.5]{PR}, that the group $\gsc(\ql)$ has no finite index subgroups. Therefore $\gsc(\ql)/P_x$ is a singleton which is precisely what we wanted to prove. 
\end{proof} 
\section{Serre--Tate coordinates and unipotent group actions} \label{Sec:SerreTate}
In this section we show that the classical Serre--Tate coordinates, as described in \cite{KatzSerreTate}, can be reinterpreted using the action of a  unipotent formal group, as in \cite{HoweUnipotent}. Our results are more-or-less a direct generalisation of the results of \cite{HoweUnipotent}, except that we construct the action of unipotent formal groups using Rapoport--Zink spaces, while in loc. cit. this action is constructed using Igusa varieties. \medskip

In Section \ref{Sec:ClassicalSerreTate}, we recall the classical theory of Serre--Tate coordinates following \cite{KatzSerreTate}, which shows that the formal deformation space $\operatorname{Def}(Y)$ of an ordinary $p$-divisible group $Y$ over $\ovfp$ has the structure of a commutative formal group. We then compute the scheme-theoretic $p$-adic Tate-module of the $p$-divisible group $\mathcal{H}_{0,1}$ associated to this formal group. In Section \ref{Sec:RapoportZink} we use Rapoport--Zink spaces to describe an action of the universal cover $\tilde{\mathcal{H}}_{0,1}$ of $\mathcal{H}_{0,1}$ on the formal scheme $\widehat{\operatorname{Def}}(Y)$ associated to $\operatorname{Def}(Y)$. In Section \ref{Sec:ProofOfSerreTateVSRapoportZink} we identify this action with the projection from the universal cover to $\mathcal{H}_{0,1}$ followed by the left-translation action of $\mathcal{H}_{0,1}$ on $\widehat{\operatorname{Def}}(Y)$.

\subsubsection{} We consider the category $\art$ of Artin local $\zpbr$-algebras $R$ such that the natural map $\ovfp \to R/\mathfrak{m}_R$ is an isomorphism. Here $\mathfrak{m}_R$ is the unique maximal ideal of $R$ and we write $\alpha:R \to \ovfp$ for the composition of the natural map $R \to R/\mathfrak{m}$ with the inverse of the natural isomorphism $\ovfp \to R/\mathfrak{m}$. Note that $\alpha$ is functorial for morphisms in $\art$. We similarly consider the category $\nilp$ of $\zpbr$-algebras in which $p^n=0$ for some $n$. The category $\art$ is naturally a full subcategory of $\nilp$.

For a $p$-divisible group $\mathscr{G}$ over an algebra $R \in \nilp$ we define the $p$-adic Tate module to be the functor $
    T_p \mathscr{G}:=\varprojlim_n \mathscr{G}[p^n],
$ which is representable by a flat affine scheme over $\spec R$ by Proposition \cite[Prop. 3.3.1]{ScholzeWeinstein}.
\subsection{Classical Serre--Tate theory} \label{Sec:ClassicalSerreTate} Let $Y$ be an ordinary $p$-divisible group of dimension $g$ and height $2g$ over $\ovfp$. In other words, let $Y$ be a $p$-divisible group isomorphic to $\tfrac{\qp}{\zp}^{\oplus g} \oplus \mu_{p^{\infty}}^{\oplus g}$. \smallskip

Let $\operatorname{Def}(Y)$ be the functor on $\art$ sending $(R,\alpha)$ to the set of isomorphism classes of pairs $(X,\beta)$ where $X$ is a $p$-divisible group over $\spec R$ and $\beta:X \otimes_{R, \alpha} \ovfp \to Y_{\ovfp}$ is an isomorphism. This functor is (pro)-representable by a formally smooth formal scheme $\operatorname{Def}(Y)$ of relative dimension $g^2$ over $\spf \zpbr$. By \cite[Thm. 2.1]{KatzSerreTate}, this functor lifts to a functor valued in abelian groups such that the formal group $\operatorname{Def}(Y)$ is a $p$-divisible.\footnote{Recall that a commutative formal group $X$ is called \emph{$p$-divisible} if $[p]:X \to X$ is finite flat.} 

There is a canonical direct sum decomposition $Y=Y_0 \oplus Y_1$ where $Y_0$ is the maximal \'etale quotient and where $Y_1$ is equal to the formal completion of $Y$ at the origin. Since $Y_0$ is \'etale there is a unique lift to a $p$-divisible formal group over $\zpbr$, which we will denote by $Y_0^{\mathrm{can}}$. Similarly $Y_1$ has a unique lift to a $p$-divisible formal group over $\zpbr$, for example because the Serre dual of $Y_1$ is \'etale. We will denote this lift by $Y_1^{\mathrm{can}}$ and we will use $Y^{\mathrm{can}}:=Y_0^{\mathrm{can}} \oplus Y_1^{\mathrm{can}}$ to denote the canonical lift of $Y$ to $\zpbr$.

Let $Y^{\vee}$ be the Serre-dual of $Y$ and consider the free $\mathbb{Z}_p$-modules of rank $g$ given by $T_p Y(\ovfp)$ and $T_p Y^{\vee}(\ovfp)$. By \cite[Thm. 2.1]{KatzSerreTate}, the formal group $\operatorname{Def}(Y)$ is isomorphic to the functor on $\art$ sending $R$ to 
\begin{align}
    \hom\left(T_p Y(\ovfp) \otimes_{\zp} T_p Y^{\vee}(\ovfp), \widehat{\mathbb{G}}_m(R) \right).
\end{align}
Let $S$ be the complete Noetherian local $\zpbr$-algebra representing $\operatorname{Def}(Y)$ on $\art$. Then the abelian group structure on $\operatorname{Def}(Y)$ induces a (continuous) co-commutative Hopf algebra structure on $S$. In particular the formal scheme $\widehat{\operatorname{Def}}(Y):=\spf S$, considered as a functor on $\nilp$, has the structure of a formal group. We will write $\mathcal{H}_{0,1}:=\varinjlim \spf S[p^n]$ for the corresponding $p$-divisible group over $\spf \zpbr$. Note that it acts via left translation on $\widehat{\operatorname{Def}}(Y)$; we will denote this action by $a_{\mathrm{ST}}$ (for Serre--Tate).

\begin{Rem}
The natural map $\mathcal{H}_{0,1} \to \widehat{\operatorname{Def}}(Y)$ is an isomorphism of formal schemes, since both of them are formally smooth formal schemes of the same dimension. Nevertheless, it useful to treat them as different objects, for example because the notation $\widehat{\operatorname{Def}}(Y)$ is somewhat unwieldy, especially when passing to universal covers of $p$-divisible groups. 
\end{Rem}
\begin{Lem} \label{Lem:TateModuleSerreTate}
   The $p$-adic Tate module of $\mathcal{H}_{0,1}$ is isomorphic to the sheaf $\Hom(Y_0^{\mathrm{can}},Y_1^{\mathrm{can}})$ on $\nilp$ of homomorphisms from $Y_0^{\mathrm{can}}$ to $Y_1^{\mathrm{can}}$.
\end{Lem}
\begin{proof}
Let us prove the stronger assertion that there are isomorphisms $\mathcal{H}_{0,1}[p^n] \simeq \Hom(Y_0^{\mathrm{can}}, Y_1^{\mathrm{can}})[p^n]$ for all $n$, compatible with changing $n$. Note that $\mathcal{H}_{0,1}[p^n]$ is represented by the spectrum of an Artin local $\zpbr$-algebra. The same is true for $\Hom(Y_0^{\mathrm{can}}, Y_1^{\mathrm{can}})[p^n]$, since $\Hom(\underline{\mathbb{Z}/p^n \mathbb{Z}}, \mu_{p^n}) \simeq \mu_{p^n}$. Thus it suffices to show that the functors $\mathcal{H}_{0,1}[p^n]$ and $\Hom(Y_0^{\mathrm{can}}, Y_1^{\mathrm{can}})[p^n]$ are isomorphic as functors on $\art$. \smallskip

In \cite[p. 152]{KatzSerreTate} it is explained that $\operatorname{Def}(Y)$ is isomorphic to the functor (on $\art$) sending $R$ to
\begin{align} \label{Eq:SerreTateFormalGroup}
    \hom(T_p Y(\ovfp), Y_1^{\mathrm{can}}(R)).
\end{align}
Note that $T_p Y(\ovfp)=T_p Y_0(\ovfp)=T_p Y_0^{\mathrm{can}}(\ovfp)$ and that because $T_p Y_0^{\mathrm{can}}$ is an inverse limit of \'etale group schemes, the natural map $T_p Y_0^{\mathrm{can}}(R) \to T_p Y_0(\ovfp)$ is an isomorphism for $R \in \art$. Thus there is a natural isomorphism
\begin{align}
    \hom(T_p Y(\ovfp), Y_1^{\mathrm{can}}(R)) \simeq \hom(T_p Y_0^{\mathrm{can}}(R), Y_1^{\mathrm{can}}(R)).
\end{align}
The $p^n$-torsion of this group is given by 
\begin{align}
    \hom(T_p Y_0^{\mathrm{can}}(R), Y_1^{\mathrm{can}}(R))[p^n] &= \hom(T_p Y_0^{\mathrm{can}}(R), Y_1^{\mathrm{can}}[p^n](R)) \\
    &=\hom(Y_0^{\mathrm{can}}[p^n](R), Y_1^{\mathrm{can}}[p^n](R)).
\end{align}
We see that there is an isomorphism $\operatorname{Def}(Y)[p^n] \simeq \Hom(Y_0^{\mathrm{can}}[p^n],Y_1^{\mathrm{can}}[p^n])$ of functors on $\art$, which induces an isomorphism $\mathcal{H}_{0,1}[p^n] \simeq \Hom(Y_0^{\mathrm{can}}[p^n],Y_1^{\mathrm{can}}[p^n])$ of functors on $\nilp$. It is straightforward to check that these isomorphisms are compatible with increasing $n$, which concludes the proof. 
\end{proof}
\subsection{Rapoport--Zink spaces and unipotent formal groups} \label{Sec:RapoportZink} Let $\tilde{Y} \to Y$ be the universal cover of $Y$, defined as the inverse limit of the projective system
\begin{align}
    \varprojlim_{p:G \to G} Y.
\end{align}
It is representable by a formal scheme by \cite[Prop. 3.1.3.(iii)]{ScholzeWeinstein}. By the proof of \cite[Proposition 4.2.11]{CaraianiScholze}, the automorphism group functors of $Y$ and $\tilde{Y}$ on $\nilp$ can be described as follows
\begin{align}
    \qquad \; \mathbf{Aut}(Y)=\begin{pmatrix}
     \mathbf{Aut}(Y_0) & 0 \\
     \Hom(Y_0, Y_1) & \mathbf{Aut}(Y_1)
    \end{pmatrix}, \mathbf{Aut}(\tilde{Y})=\begin{pmatrix}
     \mathbf{Aut}(\tilde{Y}_0) & 0 \\
     \Hom(Y_0, Y_1)[1/p] & \mathbf{Aut}(\tilde{Y}_1)
    \end{pmatrix}.
\end{align}
Moreover the functors $\mathbf{Aut}(Y_i)$ are pro-\'etale group schemes which are non-canonically isomorphic to the group schemes associated to the profinite group $\operatorname{GL}_g(\zp)$. The functors $\mathbf{Aut}(\tilde{Y}_i)$ are non-canonically isomorphic to the group schemes associated to the locally profinite group $\operatorname{GL}_g(\qp)$. 

Let $\tilde{\mathcal{H}}_{0,1}$ be the universal cover of $\mathcal{H}_{0,1}$. Then by the discussion after \cite[Def. 4.1.1]{CaraianiScholze}, we can identify the fpqc sheaves
\begin{align}
    \tilde{\mathcal{H}}_{0,1} = \left(T_p \mathcal{H}_{0,1}\right)[1/p].
\end{align}
Moreover, by the proof of \cite[Prop. 4.1.2]{CaraianiScholze}, there is a short exact sequence of fpqc sheaves
\begin{align}
    0 \to T_p \mathcal{H}_{0,1} \to \tilde{\mathcal{H}}_{0,1} \to \mathcal{H}_{0,1} \to 0.
\end{align}
By Lemma \ref{Lem:TateModuleSerreTate}, we can identify this with
\begin{align} \label{Eq:ShortExactSequenceTateModuleUniversalCover}
    0 \to \Hom(Y_0,Y_1) \to \Hom(Y_0,Y_1)[1/p] \to \mathcal{H}_{0,1} \to 0.
\end{align}
Note that $\Hom(Y_0, Y_1)[1/p]$ is isomorphic to $\widetilde{\mathcal{H}}_{0,1}$, and thus representable by a formal scheme by \cite[Prop. 3.1.3.(iii)]{ScholzeWeinstein} as above. In particular, this means that $\mathbf{Aut}(\tilde{Y})$ is representable by a formal scheme.

\subsubsection{} Let $\operatorname{RZ}_Y$ be the Rapoport--Zink space associated to $Y$. It is defined to be the functor on $\nilp$ sending $R$ to the set of isomorphism classes of pairs $(X, f)$ where $X$ is a $p$-divisible group over $\spec R$ and $f:X \dashrightarrow Y_R$ is a quasi-isogeny (or equivalently, by \cite[Lemma 1.1.3.3]{KatzSerreTate}, a quasi-isogeny $f_0:X_{R/pR} \dashrightarrow Y_{R/pR})$. The functor $\operatorname{RZ}_Y$ is representable by a formally smooth formal scheme over $\spf \zpbr$ by \cite[Thm. 2.16]{RapoportZink}. The group functor $\mathbf{Aut}(\tilde{Y})$ acts on $\operatorname{RZ}_Y$ via postcomposition, where we note that an automorphism of $\tilde{Y}$ is the same thing as a self quasi-isogeny of $Y$. \smallskip

Let $y$ be the $\ovfp$-point of $\operatorname{RZ}_Y$ corresponding to the identity map $Y \to Y$ and let 
\begin{align}
    \operatorname{RZ}_Y^{/y} \subset \operatorname{RZ}_Y
\end{align}
be the formal completion of $\operatorname{RZ}_Y$ in $\{y\}$, in the sense of considered as a formal algebraic space as in \cite[Tag0GVR]{stacks-project}. By definition this is the subfunctor of $\operatorname{RZ}_Y$ corresponding to those morphisms $\spec R \to \operatorname{RZ}_Y$ that factor through $\{y\}$ on the level of topological spaces. In other words, it consists of those morphisms $\spec R \to \operatorname{RZ}_Y$ such that the induced morphism $\spec R^{\mathrm{red}} \subset \spec R \to \operatorname{RZ}_Y$ factors through $y:\spec \ovfp \to \operatorname{RZ}_Y$. 

In terms of the moduli description, this means that we are looking at those quasi-isogenies $f:X \dashrightarrow Y_R$ such that: There is a (necessarily unique) isomorphism $\beta:X_{R^{\mathrm{red}}} \to Y_{R^{\mathrm{red}}}$ making the following diagram commute
\begin{equation} \label{Eq:FormalCompletionRZ}
    \begin{tikzcd}
        X_{R^{\mathrm{red}}} \arrow[d, dashed,"f"] \arrow[r, "\beta"] & Y_{R^{\mathrm{red}}} \arrow[d, equals] \\
        Y_{R^{\mathrm{red}}} \arrow[r, equals]\ & Y_{R^{\mathrm{red}}}.
    \end{tikzcd}
\end{equation}
Now restrict this moduli description to the full subcategory $\art \subset \nilp$. Then $\operatorname{RZ}_{Y}^{/y}$ can be described as the functor on $\art$ sending $(R,\alpha)$ to the set of isomorphisms classes of triples $(X,\beta,f)$, where $X$ is a $p$-divisible group over $R$ equipped with an isomorphism $\beta:X \otimes_{R, \alpha} \ovfp \to Y$ and where $f$ is a quasi-isogeny $f:X \dashrightarrow Y_R$ such that \eqref{Eq:FormalCompletionRZ} commutes. 
\begin{Lem} \label{Lem:FormalCompletionRZ}
The natural forgetful map $\operatorname{RZ}_{Y}^{/y} \to \operatorname{Def}(Y)$ sending $(X,\beta,f)$ to $(X,\beta)$ is an isomorphism. In particular, there is an isomorphism of formal schemes $\widehat{\operatorname{Def}}(Y) \simeq \operatorname{RZ}_{Y}^{/y}$.
\end{Lem}
\begin{proof}
The commutativity of \eqref{Eq:FormalCompletionRZ} expresses the fact that $f$ is a lift of the quasi-isogeny $Y \to Y$ given by the identity. But since quasi-isogenies lift uniquely by \cite[Lem. 1.1.3.3]{KatzSerreTate}, the data of $f$ is superfluous and we see that the forgetful map $\operatorname{RZ}_{Y}^{/y}(R) \to \operatorname{Def}(Y)(R)$ is a bijection for all $R \in \art$.
\end{proof}
The subgroup
\begin{align} \label{Eq:SemiDirectProductIntegral}
    \begin{pmatrix}
     \mathbf{Aut}(Y_0) & 0 \\
     \widetilde{\mathcal{H}}_{0,1} & \mathbf{Aut}(Y_1),
    \end{pmatrix} \subset \mathbf{Aut}(\tilde{Y})
\end{align}
preserves the identity quasi-isogeny $Y \to Y$ and therefore acts on $\widehat{\operatorname{Def}}(Y)$. In particular, the profinite group
\begin{align}
   \mathbf{Aut}(Y_0)(\ovfp) \times \mathbf{Aut}(Y_1)(\ovfp) = \mathbf{Aut}(Y)(\ovfp)
\end{align}
acts on $\widehat{\operatorname{Def}}(Y)$. This induces an action of $\mathbf{Aut}(Y)(\ovfp)$ on $\operatorname{Def}(Y)$ because $\ovfp$ is an object of $\art \subset \nilp$.
\begin{Cor} \label{Cor:UsualActionDefY}
This action sends a pair $(X, \beta) \in \operatorname{Def}(Y)(R)$, where $X$ is a $p$-divisible group over $\spec R$ and $\beta:X \otimes_{R, \alpha} \ovfp \to Y_{\ovfp}$ is an isomorphism, to $(X, g \circ \beta)$ for $g \in \mathbf{Aut}(Y)(\ovfp)$. 
\end{Cor}
\begin{proof}
This follows from Lemma \ref{Lem:FormalCompletionRZ} and the uniqueness of the isomorphism $\beta:X_{\ovfp} \to Y$ given $f:X \dashrightarrow Y_R$.
\end{proof}

\subsubsection{} Since the action of $\widetilde{\mathcal{H}}_{0,1}$ on $\operatorname{RZ}_Y$ preserves the point $y$, there is an induced action 
\begin{align} \label{Eq:TildeActionRZ}
    \tilde{a}_{\mathrm{RZ}}: \tilde{\mathcal{H}}_{0,1} \times \widehat{\operatorname{Def}}(Y) \to \widehat{\operatorname{Def}}(Y).
\end{align}

The goal of the rest of this section is to prove the following theorem, our proof of which was heavily inspired by the proof of \cite[Thm. 6.2.1]{HoweUnipotent}, which deals with the $g=1$ case.
\begin{Prop} \label{Prop:SerreTateVSRapoportZink}
The action $\tilde{a}_{\mathrm{RZ}}$ factors through an action of $\mathcal{H}_{0,1}$ via the natural quotient map
$\tilde{\mathcal{H}}_{0,1} \to \mathcal{H}_{0,1}$. Moreover the induced action of $\mathcal{H}_{0,1}$ is given by $a_{\mathrm{ST}}$.
\end{Prop}
\subsection{Proof of Proposition \ref{Prop:SerreTateVSRapoportZink}} \label{Sec:ProofOfSerreTateVSRapoportZink} Choose isomorphisms
\begin{align}
    T_p Y_0 \simeq \mathbb{Z}_p^{\oplus g}, \qquad Y_1 \simeq \left(\mu_{p^{\infty}}\right)^{\oplus g},
\end{align}
which induce isomorphisms of functors on $\art$
\begin{align}
    \operatorname{Def}(Y) \simeq \Hom\left(\mathbb{Z}_p^{\oplus g}, \left(\mu_{p^{\infty}}\right)^{\oplus g}\right)
\end{align}
In fact if we let $x_1, \cdots, x_g \in \mathbb{Z}_p^{\oplus g}$ be the standard basis vectors, then we can in fact identify
\begin{align}
    \operatorname{Def}(Y) \simeq \left(\mu_{p^{\infty}}\right)^{\oplus g^2}
\end{align}
with coordinates $q_{i,j}$ for $1 \le i,j \le g$ and similarly
\begin{align}
    \mathcal{H}_{0,1} \simeq \left(\mu_{p^{\infty}}\right)^{\oplus g^2}
\end{align}
with coordinates $\zeta_{i,j}$ for $1 \le i,j \le g$. For $R$ in $\art$ a morphism
\begin{align}
    \spec R \to \operatorname{Def}(Y)
\end{align}
corresponds to elements $q_{i,j} \in 1 + \mathfrak{m}_R$, and the corresponding deformation of $Y$ is the $p$-divisible group $X_{\underline{q}}$ corresponding to the pushout of (see \cite[p. 152]{KatzSerreTate})
\begin{align}
    0 \to \mathbb{Z}_{p}^{\oplus g} \to \qp^{\oplus g} \to \tfrac{\qp^{\oplus g}}{\zp^{\oplus g}} \to 0
\end{align}
via the morphism $\mathbb{Z}_{p,R}^{\oplus g} \to \mu_{p^{\infty},R}^{\oplus g}$ given by $x_i \mapsto (q_{i,1}, \cdots, q_{i,g})$. In fact, there is a pushout diagram
\begin{equation} \label{eq:PushOutDiagram}
    \begin{tikzcd} 
     0 \arrow{r} & \mathbb{Z}^{\oplus g} \arrow{r} \arrow{d} &\mathbb{Z}[1/p]^{\oplus g} \arrow{r} \arrow{d} & \tfrac{\qp^{\oplus g}}{\zp^{\oplus g}} \arrow{r} \arrow[d, equals] & 0\\
     0 \arrow{r} & \mathbb{Z}_{p}^{\oplus g} \arrow{r} & \qp^{\oplus g} \arrow{r} & \tfrac{\qp^{\oplus g}}{\zp^{\oplus g}} \arrow{r} & 0
\end{tikzcd}
\end{equation}
and so we can also think of $X_{\underline{q}}$ as the quotient of $\mu_{p^{\infty},R}^{\oplus g} \oplus \mathbb{Z}[1/p]^{\oplus g}$ by the image of the map $h_{\underline{q}}:\mathbb{Z}^{\oplus g} \to \mu_{p^{\infty},R}^{\oplus g} \oplus \mathbb{Z}[1/p]^{\oplus g}$ given by $x_i \mapsto ((q_{i,1}, \cdots, q_{i,g}),(x_i))$.

Let $N$ be an integer such that $q_{i,j}^{p^N}=1$ for all $i,j$, which exists since $R$ is Artinian. Then the isogeny
\begin{align}
    \mu_{p^{\infty},R}^{\oplus g} \oplus \mathbb{Z}[1/p]^{\oplus g} &\to \mu_{p^{\infty},R}^{\oplus g} \oplus \mathbb{Z}[1/p]^{\oplus g} \\
    (A,B) &\mapsto (p^N A, p^N B)
\end{align}
maps $h_{\underline{q}}(\mathbb{Z}^{\oplus g})$ into $h_{\underline{1}}(\mathbb{Z}^{\oplus g})$. Thus it induces a quasi-isogeny
\begin{align} \label{Eq:Canonicalisogeny}
    f_{\underline{q},N}:X_{\underline{q}} \dashrightarrow X_{\underline{1}}=Y_R,
\end{align}
and the induced quasi-isogeny $X_{\underline{q},\ovfp}=X_{\underline{1},\ovfp} \to X_{\underline{1},\ovfp}$ is given by $p^N$. It follows that the quasi-isogeny $p^{-N}f_{\underline{q},N}$ is the unique quasi-isogeny lifting the identity $X_{\underline{q},\ovfp}=X_{\underline{1},\ovfp} \to X_{\underline{1},\ovfp}$. Let us write $\underline{q} \in \operatorname{RZ}_Y(R)$ for $p^{-N}f_{\underline{q},N}:X_{\underline{q}} \dashrightarrow Y_R$. \smallskip

A morphism $\spec R \to \mathcal{H}_{0,1}$ corresponds to elements $\zeta_{i,j} \in 1+\mathfrak{m}_R$. The left translation action of $\mathcal{H}_{0,1}$ via the Serre--Tate action is given by
\begin{align}
    a_{\mathrm{ST}}(\underline{\zeta}, \underline{q}) = \underline{\zeta q},
\end{align}
where $(\zeta q)_{i,j}=(\zeta_{i,j} \cdot q_{i,j})$ and where $(\zeta_{i,j} \cdot q_{i,j})$ denotes the multiplication in $\mu_{p^{\infty}}(R)=1+\mathfrak{m}_R$. In terms of $p$-divisible groups, this correspond to the $p$-divisible group $X_{\underline{\zeta q}}$. We will write $\underline{\zeta q} \in \operatorname{RZ}_Y(R)$ for the element corresponding to $X_{\underline{\zeta q}}$. \smallskip
\begin{proof}[Proof of Proposition \ref{Prop:SerreTateVSRapoportZink}]
By definition of the action $\tilde{a}_{\mathrm{RZ}}$, it suffices to show that for every fpqc cover $\spec \tilde{R} \to \spec R$ and every lift
\begin{align}
    \tilde{\underline{\zeta}} \in \tilde{\mathcal{H}}_{0,1}(\tilde{R})
\end{align}
of $\underline{\zeta} \in \mathcal{H}_{0,1}(\tilde{R})$, we have $\tilde{a}_{\mathrm{RZ}}(\tilde{\underline{\zeta}}, \underline{q})=\underline{\zeta q}$. There is a universal such lift over the fpqc cover $\tilde{R}_{'}$ given by formally adjoining all the $p$-power roots of all $\zeta_{i,j}$, and it suffices to prove the result for this choice of $\tilde{R}_{'}$. To be precise, we let 
\begin{align}
    \tilde{R}_{'}&:=\varinjlim_n R[\underline{\zeta}_{i,j}^{1/p^n}], \text{ where} \\
    R[\underline{\zeta}_{i,j}^{1/p^n}]&:= R[Y_{i,j}]/(Y_{i,j}^{p^n}-\zeta_{i,j}),
\end{align}
with transition maps defined by $Y_{i,j} \mapsto Y_{i,j}^p$ or equivalently by $\zeta_{i,j}^{1/p^n} \mapsto \zeta_{i,j}^{1/p^n}$. Then the universal lift $\tilde{\underline{\zeta}}$ is the element with $\tilde{\underline{\zeta}}_{i,j}=(\zeta_{i,j}^{1/p^n})_n$.

By fpqc descent, we can furthermore pass to the fpqc cover $\tilde{R}_{'} \to \tilde{R}$ given by also formally adjoining all the $p$-power roots of all the coordinates $q_{i,j}$. Recall that $X_{\underline{q}, \tilde{R}}$ is defined as the quotient of 
\begin{align}
    \mu_{p^{\infty},\tilde{R}}^{\oplus g} \oplus \mathbb{Z}[1/p]^{\oplus g}
\end{align}
by the image of the map $h_{\underline{q}}$ which sends the standard basis element $x_i$ to
\begin{align}
    \left((q_{i,1}, \cdots, q_{i,g}),(x_i)\right) \in \mu_{p^{\infty},\tilde{R}}^{\oplus g} \oplus \mathbb{Z}[1/p]^{\oplus g}.
\end{align}
The $p$-divisible group $X_{\underline{\zeta q}, \tilde{R}}$ is defined similarly but then using the map $h_{\underline{\zeta q}}$. The compatible sequence of $p$-power roots of $\underline{\zeta}$ defined by $\tilde{\underline{\zeta}}$ defines a map
\begin{align}
    \psi_{\tilde{\underline{\zeta}}}:\mu_{p^{\infty},\tilde{R}}^{\oplus g} \oplus \mathbb{Z}[1/p]^{\oplus g} &\to \mu_{p^{\infty},\tilde{R}}^{\oplus g} \oplus \mathbb{Z}[1/p]^{\oplus g} \\
    (A,B) &\mapsto (A \cdot L_{\tilde{\underline{\zeta}}}(B) , B),
\end{align}
where $L_{\tilde{\underline{\zeta}}}:\mathbb{Z}[1/p]^{\oplus g} \to \mu_{p^{\infty},\tilde{R}}^{\oplus g}$ is the morphism sending 
\begin{align}
    \frac{x_i}{p^n} \mapsto \left(\zeta_{i,1}^{1/p^n}, \zeta_{i,2}^{1/p^n}, \cdots, \zeta_{i,g}^{1/p^n} \right).
\end{align}
It is straightforward to check that this map satisfies
\begin{align}
    \psi_{\tilde{\underline{\zeta}}}(h_{\underline{q}}(\mathbb{Z}^{\oplus g}) = h_{\underline{\zeta q}}(\mathbb{Z}^{\oplus g})
\end{align}
and that it thus induces an isomorphism on quotients
\begin{align}
    \phi_{\tilde{\underline{\zeta}}}:X_{\underline{q},\tilde{R}} \to X_{\underline{\zeta q},\tilde{R}}.
\end{align}
Choose $N$ sufficiently large such that $\zeta_{i,j}^{p^N}=1$ and $q_{i,j}^{p^N}=1$ for all $i,j$. Let 
\begin{align}
    p^{-2N}f_{\underline{\zeta q},2N}:X_{\underline{\zeta q}} \dashrightarrow Y_R
\end{align}
be the unique quasi-isogeny lifting the identity map $X_{\underline{\zeta q},\ovfp}=Y_{\ovfp} \to Y_{\ovfp}$ as described in \eqref{Eq:Canonicalisogeny}. To prove the proposition it suffices to show that the following diagram commutes:
\begin{equation} \label{Eq:CrucialDiagram}
   \begin{tikzcd}
        X_{\underline{q}, \tilde{R}} \arrow{r}{\phi_{\tilde{\underline{\zeta}}}} \arrow[d, dashed,"p^{-N}f_{\underline{q},N}"] & X_{\underline{\zeta q}, \tilde{R}} \arrow[d, dashed,"p^{-2N}f_{\underline{\zeta q},2N}"] \\
        \left(\frac{\qp^{\oplus g}}{\zp^{\oplus g}}\right)_{\tilde{R}} \oplus \mu_{p^{\infty},\tilde{R}}^{\oplus g} \arrow[r, dashed,"\xi_{\tilde{\underline{\zeta}}}"] & \left(\frac{\qp^{\oplus g}}{\zp^{\oplus g}}\right)_{\tilde{R}} \oplus \mu_{p^{\infty},\tilde{R}}^{\oplus g}.
   \end{tikzcd}
\end{equation}
Here $\xi_{\tilde{\underline{\zeta}}}$ is given by the matrix $\TJM{1}{0}{\tilde{\underline{\zeta}}}{1}$, see the beginning of Section \ref{Sec:RapoportZink} for the matrix notation. To show that this diagram commutes we consider the auxiliary commutative diagram
\begin{equation} \label{Eq:TrivialDiagram}
    \begin{tikzcd}
         \mu_{p^{\infty},\tilde{R}}^{\oplus g} \oplus \mathbb{Z}[1/p]^{\oplus g} \arrow{r}{\psi_{\tilde{\underline{\zeta}}}} \arrow{d}{p^{N}} \arrow{r} & \mu_{p^{\infty},\tilde{R}}^{\oplus g} \oplus \mathbb{Z}[1/p]^{\oplus g} \arrow{d}{p^{2N}} \\
         \mu_{p^{\infty},\tilde{R}}^{\oplus g} \oplus \mathbb{Z}[1/p]^{\oplus g} \arrow{r}{p^N \psi_{\tilde{\underline{\zeta}}}} & \mu_{p^{\infty},\tilde{R}}^{\oplus g} \oplus \mathbb{Z}[1/p]^{\oplus g}.
    \end{tikzcd} 
\end{equation}
The diagram of quasi-isogenies \eqref{Eq:CrucialDiagram} is obtained from the diagram \eqref{Eq:TrivialDiagram} by quotienting by the subgroups
\begin{equation}
    \begin{tikzcd}
         h_{\underline{q}}(\mathbb{Z}^{\oplus g}) \arrow[r,"\psi_{\tilde{\underline{\zeta}}}"] \arrow{d}{p^N} & h_{\underline{\zeta q}}(\mathbb{Z}^{\oplus g}) \arrow{d}{p^{2N}} \\
         h_{\underline{1}}(\mathbb{Z}^{\oplus g}) \arrow{r}{p^N\psi_{\tilde{\underline{\zeta}}}} & h_{\underline{1}}(\mathbb{Z}^{\oplus g}),
    \end{tikzcd}
\end{equation}
and formally inverting certain powers of $p$. It follows that \eqref{Eq:CrucialDiagram} is commutative. 
\end{proof}

\section{The formal neighborhood of an ordinary point} \label{Sec:SerreTateHodge}
The goal of this section is to give Serre--Tate coordinates for the formal completions of points in the ordinary locus of Shimura varieties of Hodge type. 

In Section \ref{Sec:IntegralModelsHyperspecial} we specialise to the smooth canonical integral models of Shimura varieties of Hodge type at hyperspecial level, and we moreover assume that the ordinary locus is nonempty. In Section \ref{Sec:Dieudonne} we recall a small amount of covariant Dieudonn\'e theory for semiperfect rings, following \cite{ScholzeWeinstein}.

In Section \ref{Sec:SubtorusHodge} we prove that the formal completion of the ordinary locus gives a subtorus of the Serre--Tate torus, reproving a special case of \cite[Thm. 1.1]{ShankarZhou}. We also give a group-theoretic description the Dieudonn\'e module of the associated $p$-divisible group. In Section \ref{Sec:NontrivialActions} we introduce strongly nontrivial actions of algebraic groups on isocrystals, which we will need to confirm the hypotheses of the rigidity theorem of \cite{ChaiRigidity}.

\subsection{Integral models at hyperspecial level} \label{Sec:IntegralModelsHyperspecial}
Let the notation be as in Section \ref{Sec:IntModels}. In particular, we have a Shimura datum $(G,X)$ of Hodge type with reflex field $E$, a prime $p$ and a prime $v$ of $E$ above $p$. Moreover there is a symplectic space $V$ and a Hodge embedding $(G,X) \to (\mathcal{G}_V, \mathcal{H}_V)$ and for every sufficiently small $K^p \subset G(\afp)$ there is a sufficiently small $\mathcal{K}^p \subset \mathcal{G}_V(\afp)$ and a finite morphism
\begin{align} \label{eq:ClosedImmersion}
    \mathscr{S}_{K}:=\mathscr{S}_K(G,X) \to \mathcal{S}_{\mathcal{K}} \otimes_{Z_{(p)}} \mathcal{O}_{E,(v)}.
\end{align}
Recall that there is a $\mathbb{Z}_{(p)}$-lattice $V_{(p)}$ of $V$ on which the symplectic form is $\mathbb{Z}_{(p)}$-valued, and recall that we have defined $K_p \subset G(\qp)$ to be its stabiliser. \textbf{From now on we will assume that $K_p$ is a hyperspecial subgroup}, in which case $\mathscr{S}_{K}$ is the canonical integral model of $\mathbf{Sh}_{K^pK_p}(G,X)$ over $\mathcal{O}_{E_v}$. Moreover the main theorem of \cite{xu2020normalization} tells us that the map \eqref{eq:ClosedImmersion} is a closed immersion. 

\subsubsection{} \label{Sec:IntegralTensors}
The Zariski closure $\mathcal{G}_{\mathbb{Z}_{(p)}}$ of $G$ inside $\mathcal{G}_{V_{(p)}}$ is a reductive group scheme over $\mathbb{Z}_{(p)}$. By \cite[Prop. 1.3.2]{KisinModels}, we can choose tensors $\{s_{\alpha}\} \subset V_{(p)}^{\otimes}$ whose stabiliser is $\mathcal{G}_{\mathbb{Z}_{(p)}}$. All the results of Section \ref{Sec:IntModels} still go through with this choice of tensors.

For $x \in \shg(\ovfp)$ we have seen in Section \ref{Sec:Tensors} that there are tensors
\begin{align}
    \{s_{\alpha,\mathrm{cris}}\} \subset \mathbb{D}_x^{\otimes},
\end{align}
where $\mathbb{D}_x^{\otimes}$ is the rational contravariant Dieudonn\'e module of $A_x[p^{\infty}]$. Now let $\mathbb{D}(A_x[p^{\infty}])$ be the \emph{integral} contravariant Dieudonn\'e module. Then as explained in \cite[Sec. 6.3]{ShankarZhou}, we can choose the tensors $\{s_{\alpha,\mathrm{cris}}\}$ to lie inside
\begin{align}
    \mathbb{D}(A_x[p^{\infty}])^{\otimes}.
\end{align}
It is moreover explained there that there is an isomorphism
\begin{align} \label{Eq:Basis}
    \mathbb{D}(A_x[p^{\infty}]) \simeq V_{(p)} \otimes_{\mathbb{Z}_{(p)}} \zpbr
\end{align}
taking $s_{\alpha,\mathrm{cris}}$ to $s_{\alpha} \otimes 1$.

\renewcommand{\shg}{\operatorname{Sh}_G} \renewcommand{\shgsp}{\operatorname{Sh}_{\operatorname{GSp}}} \newcommand{\shgspord}{\operatorname{Sh}_{\operatorname{GSp},\mathrm{ord}}} \newcommand{\sgsp}{\mathcal{S}_{\operatorname{GSp}}} \newcommand{\sg}{\mathscr{S}_G}

\subsubsection{}
Let us now drop the level from the notation and write $\sg$ and $\sgsp$ respectively for the base changes of $\mathscr{S}_K$ and $\mathcal{S}_{\mathcal{K}}$ respectively to $\zpbr$ for some choice of $\mathcal{O}_{E,v} \to \zpbr$. Similarly write $\shg$ for the special fiber of $\mathscr{S}_G$ and $\shgsp$ for the special fiber of $\sgsp$. Let $\shgspord \subset \shgsp$ be the dense open ordinary locus and define the ordinary locus of $\shg$ by $\shgord:=\operatorname{Sh}_G \cap \shgspord$. It is an open subset which is nonempty if and only if $E_v = \mathbb{Q}_p$, by \cite[Cor. 1.0.2]{LeeNewton}. \textbf{We will assume from now on that $E_v = \mathbb{Q}_p$.}
\begin{Lem} \label{Lem:OrdinaryLocusNewtonStratum}
The ordinary locus $\shgord$ is open and dense and equal to the Newton stratum $\shgb$ for $[b] \in B(G, \{\mu^{-1}\})$ the maximal element in the partial order introduced in Section \ref{Sec:NewtonStratification}.
\end{Lem}
The maximal element $[b]$ is known as the \emph{$\mu$-ordinary element}, and the maximal Newton stratum is known as the \emph{$\mu$-ordinary locus}. 
\begin{proof}
The $\mu$-ordinary locus and the ordinary locus are equal in this case by the proof of \cite[Cor. 4.3.2]{LeeNewton}, as explained in \cite[Rem. 4.3.3]{LeeNewton}. The density of the $\mu$-ordinary locus is \cite[Thm. 1.1]{Wortmann}, see \cite[Thm. 3]{KMPS} for a published reference. 
\end{proof}

\subsubsection{} Let $x \in \shg(\ovfp)$ be an ordinary point and consider the closed immersions of formal neighbourhoods (considered as functor on the category $\nilp_{\zpbr}$ of $\zpbr$-algebras where $p$ is nilpotent)
\begin{align} \label{Eq:Inclusion}
    \sg^{/x}&:=\spf \widehat{\mathcal{O}}_{\sg,x} \xhookrightarrow{} \spf \widehat{\mathcal{O}}_{\sgsp,x}=:\sgsp^{/x}.
\end{align}
Let $A$ be the universal abelian scheme over $\sgsp$ and let $X=A[p^{\infty}]$ be associated $p$-divisible group over $\sgsp$. Let $\widehat{\operatorname{Def}}(A_x)$ be the formal deformation space of the abelian variety $A_x$, that is, the formal scheme representing the functor $\operatorname{Def}(A_x)$ on the category $\art$ of deformations of the abelian variety $A_x$. Similarly let $\widehat{\operatorname{Def}}(Y)$ be the deformation space of the $p$-divisible group $X_x=:Y$. There are natural morphisms
\begin{align}
    \sgsp^{/x} \to \widehat{\operatorname{Def}}(A_x) \to \widehat{\operatorname{Def}}(Y).
\end{align}
The first is a closed immersion by the moduli description of $\sgsp$, and the second morphism is an isomorphism by \cite[Thm. 1.2.1]{KatzSerreTate}. Now \cite[Thm 1.1]{ShankarZhou} (c.f. \cite{Noot} for closely related results) implies that  closed formal subscheme $$\sg^{/x} \subset \widehat{\operatorname{Def}}(Y)$$ is a $p$-divisible formal subgroup. The goal of this section is to compute the Dieudonn\'e module of $\shg^{/x}$. We do this by giving a new proof that
\begin{align}
    \shg^{/x} \subset \widehat{\operatorname{Def}}(Y)
\end{align}
is a $p$-divisible formal subgroup, using the methods of Section \ref{Sec:SerreTate} and results of \cite{KimLeaves}.

\subsection{Some covariant Dieudonn\'e theory} \label{Sec:Dieudonne}
\subsubsection{A caveat} In the rest of this subsection we are going to recall some covariant Dieudonn\'e theory for semiperfect rings following \cite{ScholzeWeinstein}. The reason we do this is that the references \cite{CaraianiScholze} and \cite{KimLeaves} are written in this language. Moreover we feel that results such as Lemma \ref{Lem:IntegralPHodgeTheoryTateModuleUniversalCover} are most naturally stated using the covariant theory. \smallskip

To avoid potential confusion, we will always write a subscript $\mathrm{cov}$ when using covariant Dieudonn\'e theory. The covariant theory and the contravariant theory will interact only once, in Section \ref{Sec:DieudonneModuleSerreTateTorus}, and we will warn the reader again there.
\subsubsection{} Recall that an $\fp$-algebra $A$ is \emph{semiperfect} if it is the quotient of a perfect $\fp$-algebra $B$ and that it is \emph{f-semiperfect} if it is the quotient of a perfect $\fp$-algebra by a finitely generated ideal. Let $A$ be a semiperfect $\mathbb{F}_p$-algebra and let $A_{\mathrm{cris}}(A)$ be Fontaine's ring of crystalline periods (see \cite[Prop. 4.1.3]{ScholzeWeinstein}) with $\varphi:A_{\mathrm{cris}}(A) \to A_{\mathrm{cris}}(A)$ induced by the absolute Frobenius on $A$. 
\begin{Def} \label{Def:DieudonneI}
A \emph{covariant Dieudonn\'e module} over a semiperfect $\fp$-algebra $A$ is a pair $(M, \varphi_{M})$, where $M$ is a finite locally free $A_{\mathrm{cris}}(A)$-module and where
\begin{align}
    \varphi_{M}: \varphi^{\ast} M[\tfrac 1p] \to M[\tfrac 1p]
\end{align}
is an isomorphism such that  \begin{align}
    M \subseteq \varphi_{M} (M) \subseteq \tfrac{1}{p} M.
\end{align}
\end{Def}
\begin{Rem}
Usually one instead asks that
\begin{align}
    pM \subseteq \varphi_{M} (M) \subseteq  M.
\end{align}
The reasons for our conventions is that they agree with the conventions in \cite{CaraianiScholze} and \cite{KimLeaves}.

A $p$-divisible group $\mathscr{G}$ over $A$ has a covariant\footnote{We write $\mathbb{D}_{\mathrm{cov}}$ to distinguish from the contravariant Dieudonn\'e theory that we used in Section \ref{Sec:Monodromy}.} Dieudonn\'e module $(\mathbb{D}_{\mathrm{cov}}(\mathscr{G}), \varphi_\mathscr{G})$. For $\spec A' \to \spec A$ there is a canonical isomorphism
\begin{align}
    (\mathbb{D}_{\mathrm{cov}}(\mathscr{G}_{A'}), \varphi_{\mathscr{G}_{A'}}) \simeq (\mathbb{D}_{\mathrm{cov}}(\mathscr{G}), \varphi_\mathscr{G}) \otimes_{A_{\mathrm{cris}}(A)} A_{\mathrm{cris}}(A').
\end{align}
Our covariant Dieudonn\'e modules are normalised as in \cite{CaraianiScholze}. In particular, this means that the covariant Dieudonn\'e module of $\mathbb{Q}_p/\mathbb{Z}_p$ over $A$ is $A_{\mathrm{cris}}(A)$ equipped with the trivial Frobenius, and the covariant Dieudonn\'e module of $\mu_{p^{\infty}}$ is $A_{\mathrm{cris}}(A)$ equipped with Frobenius given by $1/p$. This also means that the contravariant Dieudonn\'e module is isomorphic to the dual of the covariant Dieudonn\'e module, see \cite[footnote on page 692]{CaraianiScholze}.
\end{Rem}
Now let $\mathscr{G}$ be a $p$-divisible group over $\fp$ with universal cover $\widetilde{\mathscr{G}}$ in the sense of \cite[Sec. 3.1]{ScholzeWeinstein}. If we consider $\widetilde{\mathscr{G}}$ as a functor on $\nilp$ then it is a a filtered colimit of spectra of f-semiperfect $\fp$-algebras by \cite[Prop. 3.1.3.(iii)]{ScholzeWeinstein} and is thus determined by its restriction to the category of semiperfect $\fp$-algebras. We can describe it explicitly on the category of f-semiperfect $\fp$-algebras as follows:
\begin{Lem} \label{Lem:IntegralPHodgeTheoryTateModuleUniversalCover}
There is a commutative diagram of natural transformation of functors on the category of f-semiperfect $\fp$-algebras, which evaluated at an object $A$ gives
\begin{equation} \label{eq:IntegralDieudonneDiagram}
    \begin{tikzcd}
    \tilde{\mathscr{G}}(A) \arrow[r, "\simeq"] & \left(B_{\mathrm{cris}}^+(A) \otimes_{\qpbr} \mathbb{D}_{\mathrm{cov}}(\mathscr{G})[\tfrac{1}{p}] \right)^{\varphi=1},
    \end{tikzcd}
\end{equation}
where $\varphi$ is given by the diagonal Frobenius and where $B_{\mathrm{cris}}^+(A):=A_{\mathrm{cris}}(A)[1/p]$.
\end{Lem}
\begin{proof}
Let $A$ be f-semiperfect, then \cite[Thm. 4.1.4]{ScholzeWeinstein} tells us that Dieudonn\'e module functor over $A$ is fully faithful after inverting $p$. There is a natural map 
\begin{align}
    T_p \mathscr{G}(A) &= \operatorname{Hom}_A((\mathbb{Q}_p/\mathbb{Z}_p)_A,\mathscr{G}_A)\\
    &\to \operatorname{Hom}_{A_{\mathrm{cris}},F}(A_{\mathrm{cris}}(A), A_{\mathrm{cris}}(A) \otimes_{\zpbr} \mathbb{D}_{\mathrm{cov}}(\mathscr{G})) \\
    &\simeq \left(A_{\mathrm{cris}}(A) \otimes_{\zpbr} \mathbb{D}_{\mathrm{cov}}(\mathscr{G})\right)^{\varphi=1}
\end{align}
where the latter bijection is induced by evaluation at $1$. After inverting $p$ we get a natural isomorphism
\begin{align}
    \tilde{\mathscr{G}}(A) &= \operatorname{Hom}_A((\mathbb{Q}_p/\mathbb{Z}_p)_A,\mathscr{G}_A)[\tfrac{1}{p}]\\
    &\to \left(B_{\mathrm{cris}}^{+}(A) \otimes_{\qpbr} \mathbb{D}_{\mathrm{cov}}(\mathscr{G})[\tfrac{1}{p}] \right)^{\varphi=1}.
\end{align}
\end{proof}

\subsection{The Dieudonn\'e module of the Serre--Tate torus} \label{Sec:DieudonneModuleSerreTateTorus} Let $x \in \shgord(\ovfp)$ be as above and let $Y=A_x[p^{\infty}]$ be the corresponding $p$-divisible group. Recall from Section \ref{Sec:SerreTate} that $Y=Y_0 \oplus Y_1$ and that both $Y_0$ and $Y_1$ lift uniquely to $p$-divisible groups $Y_0^{\mathrm{can}}$ and $Y_1^{\mathrm{can}}$ over $\zpbr$. Let $\operatorname{Def}(Y)$ be the formal deformation space of $Y$, considered as a functor on $\art$ together with its extension $\widehat{\operatorname{Def}}(Y)$ to $\nilp$. We have seen that $\widehat{\operatorname{Def}}(Y)$ has the structure of a $p$-divisible formal group, and we use $\mathcal{H}_{0,1}$ to denote the corresponding $p$-divisible group over $\spf \zpbr$. 

Consider the special fiber $\mathcal{H}_{0,1,\ovfp}$. Then by Lemma \ref{Lem:TateModuleSerreTate} its $p$-adic Tate module is given by $\Hom(Y_0,Y_1)$. Therefore by \cite[Lem. 4.1.7, Lem. 4.1.8]{CaraianiScholze} we have an isomorphism
\begin{align}
    \mathbb{D}_{\mathrm{cov}}(\mathcal{H}_{0,1,\ovfp})[1/p] \simeq \Hom(\mathbb{D}_{\mathrm{cov}}(Y_0)[1/p], \mathbb{D}_{\mathrm{cov}}(Y_1)[1/p])^{\le 0},
\end{align}
where $\Hom$ denotes the internal hom in $F$-isocrystals and where $()^{\le 0}$ denotes the slope at most $0$ part of an $F$-isocrystal. 

\subsubsection{} Choose an isomorphism (here we use contravariant Dieudonn\'e theory!) $\mathbb{D}(Y) \to V_{(p)} \otimes \zpbr$ sending $s_{\alpha} \otimes 1$ to $s_{x,\mathrm{cris}}$ as in Section \ref{Sec:IntegralTensors}. This induces an isomorphism from $V_{(p)}^{\ast} \otimes \zpbr$ to the covariant Dieudonn\'e module $\mathbb{D}_{\mathrm{cov}}(Y)$ and thus gives us Frobenius invariant tensors $\{s_{\alpha,\mathrm{cris}}\} \subset \mathbb{D}_{\mathrm{cov}}(Y)^{\otimes}$. Let $b \in G(\qpbr) \subset \operatorname{GL}(V^{\ast})(\qpbr)$ be the element corresponding to the Frobenius in $\mathbb{D}_{\mathrm{cov}}(Y)[1/p]$. Then there is an inclusion of $F$-isocrystals 
\begin{align} \label{Eq:InternalHomCrystals}
    \Hom(\mathbb{D}_{\mathrm{cov}}(Y_0)[1/p], \mathbb{D}_{\mathrm{cov}}(Y_1)[1/p]) \subset \Hom(\mathbb{D}_{\mathrm{cov}}(Y)[1/p], \mathbb{D}_{\mathrm{cov}}(Y)[1/p]),
\end{align}
which sends $f:\mathbb{D}_{\mathrm{cov}}(Y_0)[1/p] \to \mathbb{D}_{\mathrm{cov}}(Y_1)[1/p]$ to $$\operatorname{Id}+f:\mathbb{D}_{\mathrm{cov}}(Y_0)[1/p] \oplus \mathbb{D}_{\mathrm{cov}}(Y_1)[1/p] \to \mathbb{D}_{\mathrm{cov}}(Y_0)[1/p] \oplus \mathbb{D}_{\mathrm{cov}}(Y_1)[1/p].$$ This map realises the source as the slope $-1$ part of the target.

\subsubsection{} Write $\mathfrak{gl}(V^{\ast})$ for the Lie algebra of the algebraic group $\operatorname{GL}(V^{\ast}) \otimes \qpbr$ and identify it with the vector space of endomorphisms of $V^{\ast} \otimes \qpbr$ equipped with the commutator bracket. We can equip $\mathfrak{gl}(V^{\ast})$ with the structure of an $F$-isocrystal by letting Frobenius act by conjugation by $b \in \operatorname{GL}(V^{\ast}) \otimes \qpbr$. Let us write $\left(\mathfrak{gl}(V^{\ast}), \operatorname{Ad} b \sigma \right)$
for this isocrystal. 

Using the isomorphism $V_{(p)}^{\ast} \otimes \zpbr \simeq \mathbb{D}_{\mathrm{cov}}(Y)$ as above, we can identify the $F$-isocrystal on the right hand side of \eqref{Eq:InternalHomCrystals} with $\left(\mathfrak{gl}(V^{\ast}), \operatorname{Ad} b \sigma \right)$. There is a sub $F$-isocrystal
\begin{align}
    (\mathfrak{g}, \operatorname{Ad} b \sigma) \subset (\mathfrak{gl}(V^{\ast}), \operatorname{Ad} b \sigma),
\end{align}
where $\mathfrak{g}=\operatorname{Lie} G \otimes \qpbr$. By Lemma \ref{Lem:LieAlgebrasTensors} below, the subspace $\mathfrak{g} \subset \mathfrak{gl}(V^{\ast})$ is precisely the subspace of those endomorphisms $g$ of $V^{\ast} \otimes \qpbr$ that satisfy $g^{\ast} (s_{\alpha} \otimes 1) = 0$ for all tensors $s_{\alpha}$. 
\begin{Lem} \label{Lem:LieAlgebrasTensors}
Let $C$ be a field of characteristic zero and let $W$ be a finite dimensional $C$ vector space. Let $H \subset \operatorname{GL}(W)$ be a connected reductive group that is the stabiliser of a collection of tensors $\{t_{\alpha}\}_{\alpha \in \mathscr{A}} \subset W^{\otimes}$. Then the Lie algebra $\mathfrak{h} \subset \mathfrak{gl}(W)$ is given by the subspace
\begin{align}
 \{H \in \mathfrak{gl}(W) : H^{\ast}(t_{\alpha}) = 0  \text{ for all } \alpha \in \mathscr{A}\}.
\end{align}
\end{Lem}
\begin{proof}
The Lie algebra is given by the kernel of the map $G(C[\epsilon]/(\epsilon^2)) \to G(C)$. Thus it consists of matrices of the form $1 + \epsilon M$, where $M \in \operatorname{gl}(W)$, such that for $\alpha \in \mathscr{A}$ we have
\begin{align}
    (1 + \epsilon M)^{\ast}(t_{\alpha} \otimes 1) = t_{\alpha}.
\end{align}
But is equivalent to $(\epsilon M)^{\ast}(t_{\alpha} \otimes 1)=0$ or $M^{\ast}(t_{\alpha})=0$.
\end{proof}
Let us write
\begin{align} \label{Eq:InclusionDieudonne}
    (\mathfrak{g}, \operatorname{Ad} b \sigma)^{-1} \subset \Hom(\mathbb{D}_{\mathrm{cov}}(Y_0)[1/p], \mathbb{D}_{\mathrm{cov}}(Y_1)[1/p]) = \mathbb{D}_{\mathrm{cov}}(\mathcal{H}_{0,1,\ovfp})[1/p]
\end{align}
for the slope $-1$ subspace of the $F$-isocrystal $(\mathfrak{g}, \operatorname{Ad} b \sigma)$. Then by \cite[Lem. 3.1.3]{KimLeaves} and its proof, there is an inclusion of $p$-divisible groups
\begin{align} \label{eq:InclusionBTGroups}
    \mathcal{H}_{0,1,\ovfp}^G \subset \mathcal{H}_{0,1,\ovfp}
\end{align}
inducing \eqref{Eq:InclusionDieudonne} upon taking rational covariant Dieudonn\'e modules. Since both of these $p$-divisible groups have \'etale Serre duals, there is a unique lift $\mathcal{H}_{0,1}^G$ of $\mathcal{H}_{0,1,\ovfp}^G$ to $\zpbr$ and a unique lift
\begin{align}
    \mathcal{H}_{0,1}^G \subset \mathcal{H}_{0,1}
\end{align}
of the inclusion \eqref{eq:InclusionBTGroups}.

\begin{Lem} \label{Lem:KimFixII}
Let $A$ be an f-semiperfect $\mathbb{F}_p$-algebra. Then the inclusion 
\begin{align} \label{Eq:InclusionUniversalCovers}
    \widetilde{\mathcal{H}}_{0,1,\ovfp}^G(A) \subset \widetilde{\mathcal{H}}_{0,1,\ovfp}(A) = \Hom(Y_{0,A},Y_{1,A})[1/p]
\end{align}
identifies $\widetilde{\mathcal{H}}_{0,1,\ovfp}^G(A)$ with the subspace of those quasi-endomorphisms $f:Y_{0,A}\dashrightarrow Y_{1,A}$ such that the induced quasi-endomorphism
\begin{align}
    g=\TJM{0}{0}{f}{0}:Y_A \dashrightarrow Y_A
\end{align}
induces an endomorphism $\mathbb{D}_{\mathrm{cov}}(Y_A)[1/p] \to \mathbb{D}_{\mathrm{cov}}(Y_A)[1/p]$ satisfying $g^{\ast} ( s_{\alpha,\mathrm{cris}} \otimes 1) =0$.
\end{Lem}
\begin{proof}
This follows from Lemma \ref{Lem:IntegralPHodgeTheoryTateModuleUniversalCover} in combination with  Lemma \ref{Lem:LieAlgebrasTensors}.
\end{proof}
\begin{Rem}
The statement of Lemma \ref{Lem:KimFixII} contradicts \cite[Lem 3.1.3]{KimLeaves}, which implies that the inclusion 
\begin{align} 
    \widetilde{\mathcal{H}}_{0,1,\ovfp}^G(A) \subset \widetilde{\mathcal{H}}_{0,1,\ovfp}(A) = \Hom(Y_{0,A},Y_{1,A})[1/p]
\end{align}
identifies $\widetilde{\mathcal{H}}_{0,1,\ovfp}^G(A)$ with the subspace of those quasi-endomorphisms $f:Y_{0,A}\dashrightarrow Y_{1,A}$ such that the induced quasi-endomorphism
\begin{align}
    g=\TJM{0}{0}{f}{0}:Y_A \dashrightarrow Y_A
\end{align}
induces an endomorphism $\mathbb{D}_{\mathrm{cov}}(Y_A)[1/p] \to \mathbb{D}_{\mathrm{cov}}(Y_A)[1/p]$ satisfying $g^{\ast} ( s_{\alpha,\mathrm{cris}} \otimes 1) =s_{\alpha,\mathrm{cris}}$. This cannot be correct because $\widetilde{\mathcal{H}}_{0,1,\ovfp}^G(A)$ is stable under addition and if $g_1, g_2$ both satisfy $g^{\ast} (s_{\alpha} \otimes 1)=(s_{\alpha} \otimes 1)$ then their sum $g_1+g_2$ does not.
\end{Rem}
The following lemma and its corollary essentially follow from \cite[Prop. 3.2.4]{KimLeaves}. However the construction there is incorrect because of the error in \cite[Lem 3.1.3]{KimLeaves} pointed out above. Once the subgroup in the statement of Lemma \ref{Lem:KimFix} has been shown to exist with the properties proved in Corollary \ref{Cor:KimFixIII}, the rest of the arguments in \cite{KimLeaves} go through without further changes.
\begin{Lem} \label{Lem:KimFix}
There is a closed subgroup
\begin{align}
    \mathbf{Aut}_G(\tilde{Y}) \subset \mathbf{Aut}(\tilde{Y})
\end{align}
such that on $A$-points for f-semiperfect $\ovfp$-algebras $A$, it is the subgroup of those quasi-isogenies $g:Y_A\dashrightarrow Y_A$ that induce isomorphisms $g:\mathbb{D}_{\mathrm{cov}}(Y_A)[1/p] \to \mathbb{D}_{\mathrm{cov}}(Y_A)[1/p]$ satisfying $$g^{\ast} (s_{\alpha,\mathrm{cris}} \otimes 1)=s_{\alpha,\mathrm{cris}} \otimes 1.$$ 
\end{Lem}
We will call such quasi-isogenies \emph{tensor-preserving quasi-isogenies}.
\begin{proof}
First of all by \cite[Lem. 4.2.10]{CaraianiScholze} the functor $\mathbf{Aut}(\tilde{Y})$ satisfies
\begin{align}
    \mathbf{Aut}(\tilde{Y})(R) = \mathbf{Aut}(\tilde{Y})(R/p),
\end{align}
for all $R \in \nilp$. Thus we can define a closed subfunctor of $\mathbf{Aut}(\tilde{Y})$ by specifying its values on $\ovfp$-algebras. 

The matrix description of $\mathbf{Aut}(\tilde{Y})$ in Section \ref{Sec:RapoportZink} gives us a semi-direct product decomposition (see \cite[Prop. 4.2.11, Rem. 4.2.12]{CaraianiScholze})
\begin{align}
    \mathbf{Aut}(\tilde{Y})_{\ovfp}:=\Hom(Y_0, Y_1)[1/p] \rtimes \left(\mathbf{Aut}(\tilde{Y_0})_{\ovfp} \times \mathbf{Aut}(\tilde{Y_1})_{\ovfp} \right).
\end{align}
Here we are using the map
\begin{align}
    \Hom(Y_0,Y_1)[1/p] &\to \mathbf{Aut}(\tilde{Y})_{\ovfp} \\
    f \mapsto \TJM{1}{0}{f}{1}
\end{align}
to realise $\Hom(Y_0,Y_1)[1/p]$ as the subgroup of lower triangular automorphisms of $\tilde{Y}$. The condition that $\TJM{1}{0}{f}{1}=1+f$ satisfies $(1+f)^{\ast} (s_{\alpha,\mathrm{cris}} \otimes 1)=s_{\alpha,\mathrm{cris}} \otimes 1$ is equivalent to the condition that $f^{\ast} (s_{\alpha,\mathrm{cris}} \otimes 1)=0$. Thus we see that the intersection $\Hom(Y_0,Y_1)[1/p]$ with 
$\mathbf{Aut}_G(\tilde{Y})$ is given by 
\begin{align}
     \widetilde{\mathcal{H}}_{0,1,\ovfp}^G \subset \widetilde{\mathcal{H}}_{0,1,\ovfp} = \Hom(Y_{0},Y_{1})[1/p].
\end{align}
By Lemma \ref{Lem:KimFixII}, this is representable by a closed subgroup. \medskip

We can identify the group $\left(\mathbf{Aut}(\tilde{Y_1})_{\ovfp} \times \mathbf{Aut}(\tilde{Y_0})_{\ovfp} \right)$ with the locally profinite group scheme associated to the locally profinite group $\mathbf{Aut}(\tilde{Y})(\ovfp)$. Using Dieudonn\'e theory, we can identify this locally profinite group with the $\sigma$-centraliser of $b$ in $\operatorname{GL}(V^{\ast})(\qpbr)$, where we recall that we have fixed an isomorphism $V_{(p)}^{\ast} \otimes \zpbr \simeq \mathbb{D}_{\mathrm{cov}}(Y)$ giving rise to $b \in G(\qpbr)$. The subgroup of tensor-preserving automorphisms of $\tilde{Y}$ over $\ovfp$ can be identified with $J_b(\qp)$, the $\sigma$-centraliser of $b \in G(\qpbr)$, which is a closed subgroup.

Note that $J_b(\qp) \subset G(\qpbr)$ stabilises $(\mathfrak{g}, \operatorname{Ad} b \sigma)^{-1}$ because it acts on $\mathfrak{g}$ via automorphisms that preserve the slope decomposition. Using Lemma \ref{Lem:KimFixII} we see that the closed subgroup
\begin{align}
    \widetilde{\mathcal{H}}_{0,1,\ovfp}^G \rtimes \underline{J_b(\qp)} \subset \mathbf{Aut}(\tilde{Y})_{\ovfp},
\end{align}
has the required properties over $\ovfp$, and so we are done. 
\end{proof}
Since the $R$-points of $\widetilde{\mathcal{H}}_{0,1}^{G}$ and $\underline{J_b(\qp)}$ both only depend on $R/p$, we see that
\begin{align}
    \widetilde{\mathcal{H}}_{0,1}^G \rtimes \underline{J_b(\qp)} = \mathbf{Aut}_G(\tilde{Y})
\end{align}
described the unique lift to $\zpbr$. This identifies $\widetilde{\mathcal{H}}_{0,1}^G$ with the neutral component $\mathbf{Aut}_G(\tilde{Y})^{\circ}$ of $\mathbf{Aut}_G(\tilde{Y})$.
\begin{Cor} \label{Cor:KimFixIII}
The identity component
\begin{align}
    \widetilde{\mathcal{H}}_{0,1,\ovfp}^G = \mathbf{Aut}_G(\tilde{Y})^{\circ} \subset \mathbf{Aut}_G(\tilde{Y})
\end{align}
is isomorphic to $\spf S$ where $S$ is the $p$-adic completion of $\zpbr[[x_1^{1/p^{\infty}}, \cdots, x_d^{1/p^{\infty}}]]$ for some $d$.
\end{Cor}
\begin{proof}
This is true for $\widetilde{\mathcal{H}}_{0,1,\ovfp}^G$ because it is the universal cover of a $p$-divisible group, see \cite[Cor. 3.1.5, Sec. 6.4]{ScholzeWeinstein}.
\end{proof}

\subsection{Serre--Tate coordinates for Hodge type Shimura varieties} \label{Sec:SubtorusHodge} Recall that $x \in \shg(\ovfp)$ is an ordinary point with associated element $b=b_x \in G(\qpbr)$. Recall also from Section \ref{Sec:Tensors} that we have a $G(\qpbar)$ conjugacy class of cocharacters $\{\mu\}$ coming from the Shimura datum $X$ and the fixed place $v$ of $E$.
\begin{Lem} \label{Lem:NewtonCocharacter}
    The conjugacy class of fractional cocharacters $\{\nu_{[b]}\}$ defined by $[b]$ is equal to $\{\mu^{-1}\}$.
\end{Lem}
\begin{proof}
The ordinary locus is equal to the $\mu$-ordinary locus by Lemma \ref{Lem:OrdinaryLocusNewtonStratum}. Therefore we have that $\{\nu_{[b]}\}=\{\overline{\mu}\}$, where $\{\overline{\mu}\}$ is the Galois-average of $\{\mu^{-1}\}$, see \cite[Sec. 2.1]{ShankarZhou}. But since $G_{\qp}$ is unramified and the local reflex field $E_v$ is equal to $\qp$ there is a cocharacter $\mu$ defined over $\qp$ inducing the conjugacy class of cocharacters $\{\mu\}$. It follows that $\{\overline{\mu}\}=\{\mu^{-1}\}$.
\end{proof}
Let $\{\lambda\}$ be a conjugacy class of (fractional) cocharacters of a connected reductive group $H$ over an algebraically closed field $C$. Let $T$ be a maximal torus, let $\lambda$ be a representative of $\{\lambda\}$ factoring through $T$ and let $B \supset T$ be a Borel. Let $\rho \in X^{\ast}(T)$ be the half sum of the positive roots with respect to $B$. Then the pairing $\langle 2 \rho, \lambda \rangle$ does not depend on the choice of $T, B$ or $\lambda$, and we denote it by $\langle 2 \rho, \{\lambda\} \rangle$. 
\begin{Cor} \label{Cor:Dimension} The $p$-divisible formal group $\mathcal{H}_{0,1,\ovfp}^G$ has dimension $\langle 2 \rho, \{\mu\} \rangle$
\end{Cor}
\begin{proof}
The dimension of $\mathcal{H}_{0,1,\ovfp}^G$ is equal to $\langle 2 \rho, \nu_{[b]} \rangle$ by \cite[Prop. 3.1.4]{KimLeaves}, which is equal to $\langle 2 \rho, \{\mu^{-1}\} \rangle=\langle 2 \rho, \{\mu\} \rangle$ by Lemma \ref{Lem:NewtonCocharacter}.
\end{proof}
\begin{Prop} \label{Prop:DieudonneModuleSerreTateTorusHodge}
The closed formal subscheme
\begin{align}
    \sg^{/x} \xhookrightarrow{} \sgsp^{/x} \xhookrightarrow{} \widehat{\operatorname{Def}}(Y)
\end{align}
introduced in \eqref{Eq:Inclusion}, is a $p$-divisible formal subgroup. The induced inclusion of $p$-divisible groups 
\begin{align}
    \sg^{/x}[p^{\infty}] \subset \widehat{\operatorname{Def}}(Y)[p^{\infty}]=\mathcal{H}_{0,1}
\end{align}
induces the inclusion 
\begin{align}
    (\mathfrak{g}, \operatorname{Ad} b \sigma)^{-1} \subset \Hom(\mathbb{D}_{\mathrm{cov}}(Y_0)[1/p], \mathbb{D}_{\mathrm{cov}}(Y_1)[1/p])
\end{align}
from \eqref{Eq:InclusionDieudonne} on rational covariant Dieudonn\'e modules of their special fibers. 
\end{Prop}
\begin{proof}
By \cite[Thm. 4.3.1]{KimLeaves}, the closed formal subscheme $\sg^{/x} \subset \widehat{\operatorname{Def}}(Y)$ is stable under the action of
\begin{align}
        \mathbf{Aut}_{G}(\tilde{Y})^{\circ} \subset \mathbf{Aut}(\tilde{Y})^{\circ}.
\end{align}
We can identify these groups with
\begin{align} \label{Eq:InclusionUniversalCoversLifted}
    \widetilde{\mathcal{H}}_{0,1}^G \subset \widetilde{\mathcal{H}}_{0,1}
\end{align}
By Proposition \ref{Prop:SerreTateVSRapoportZink}, the action of $\widetilde{\mathcal{H}}_{0,1}$ on $\widehat{\operatorname{Def}}(Y)$ factors through the natural action of $\mathcal{H}_{0,1}$ on $\widehat{\operatorname{Def}}(Y)$ by left translation, via the natural quotient map
\begin{align}
    \widetilde{\mathcal{H}}_{0,1} \to \mathcal{H}_{0,1}.
\end{align}
The inclusion $\mathcal{H}_{0,1}^G \subset \mathcal{H}_{0,1}$ induces an inclusion $T_p \mathcal{H}_{0,1}^G \subset T_p \mathcal{H}_{0,1}$ which induces \eqref{Eq:InclusionUniversalCoversLifted}
after inverting $p$. This implies that the action of $\widetilde{\mathcal{H}}_{0,1}^G$ on $\sg^{/x}$ factors through an action of $\mathcal{H}_{0,1}^G$ via the natural quotient map
\begin{align}
    \widetilde{\mathcal{H}}_{0,1}^G \to \mathcal{H}_{0,1}^G.
\end{align}
Now consider the closed point $\{x\} \in \shg^{/x}$. Then the associated orbit map gives a closed immersion
\begin{align}
    \mathcal{H}_{0,1,\ovfp} \xhookrightarrow{} \operatorname{Def}(Y)_{\ovfp}.
\end{align}
This means that we similarly get a closed immersion
\begin{align}
    \mathcal{H}_{0,1,\ovfp}^G \subset \shg^{/x}.
\end{align}
By \cite[Prop. 3.1.4]{KimLeaves}, the formal scheme $\operatorname{Def}(Y)_{G,\ovfp}$ has dimension $\langle 2 \rho, \{\nu_{[b]}\} \rangle$, which is equal to $\langle 2 \rho, \{\mu\} \rangle$ by Lemma \ref{Lem:NewtonCocharacter}, which in turn is equal to the dimension of $\shg^{/x}$. It follows that the orbit map induces an isomorphism
\begin{align}
    \mathcal{H}_{0,1,\ovfp}^G \to \shg^{/x}
\end{align}
and that $\shg^{/x}$ is a formal subgroup of $\operatorname{Def}(Y)_{\ovfp}$ satisfying the conclusions of the proposition. It remains to show that $\sg^{/x} \subset \operatorname{Def}(Y)$ is a formal subgroup, which follows from \cite[Thm. 1.1]{ShankarZhou}.
\end{proof}

\subsubsection{The action of automorphism groups} \label{Sec:ActionOfAutomorphisms} Let the notation be as in Section \ref{Sec:SerreTateHodge}. Recall that we have fixed an isomorphism $\mathbb{D}_{\mathrm{cov}}(Y) \simeq V_{(p)}^{\ast} \otimes \zpbr$ sending $s_{\alpha} \otimes 1$ to $s_{\alpha,\mathrm{cris}}$. This gives us an element $b \in G(\qpbr) \subset \operatorname{GL}(V^{\ast})(\qpbr)$ corresponding to the Frobenius in $\mathbb{D}_{\mathrm{cov}}(Y)[1/p]$.

Recall from Section \ref{Sec:SerreTate} that there is an action of $\mathbf{Aut}(\tilde{Y})$ on $\operatorname{RZ}_Y$. Recall from the discussion before Corollary \ref{Cor:UsualActionDefY}, that $\underline{\mathbf{Aut}(Y)(\ovfp)} \subset \mathbf{Aut}(\tilde{Y})$ preserves the $\ovfp$ points corresponding to the identity from $Y$ to $Y$, and that this induces an action of the profinite group $\mathbf{Aut}(Y)(\ovfp)$ on $\operatorname{Def}(Y)$. This action is described in Corollary \ref{Cor:UsualActionDefY}.
\subsubsection{} Recall that there are locally profinite groups
\begin{equation}
    \begin{tikzcd}
        J_b(\mathbb{Q}_p) \arrow[r, hook] \arrow[d, hook] & G(\qpbr) \arrow{d} \\
        \operatorname{GL}(V^{\ast})_b(\qp) \arrow[r, hook] & \operatorname{GL}(V^{\ast})(\qpbr),
    \end{tikzcd}
\end{equation}
where $\operatorname{GL}(V^{\ast})_b(\qp)=\mathbf{Aut}(\tilde{Y})(\ovfp)$ is the $\sigma$-centraliser of $b$ in $\operatorname{GL}(V^{\ast})(\qpbr)$. Let us write $U_p \subset J_b(\mathbb{Q}_p)$ for the compact open subgroup given by the intersection
\begin{align}
    \mathbf{Aut}_G(\tilde{Y})(\ovfp) \cap \mathbf{Aut}(Y)(\ovfp).
\end{align}
Then $U_p$ acts on $\widetilde{\mathcal{H}}_{0,1}^G \subset \Hom(Y_0,Y_1)[1/p]$ and preserves the action of $T_p \mathcal{H}_{0,1}^G$, and thus acts on the quotient $\mathcal{H}_{0,1} \simeq \sg^{/x}$. By Proposition \ref{Prop:DieudonneModuleSerreTateTorusHodge} and the proof of Lemma \ref{Lem:KimFix}, the induced action on rational Dieudonn\'e modules can be identified with the natural action of $U_p \subset J_b(\mathbb{Q}_p)$ on 
\begin{align}
    (\mathfrak{g}, \operatorname{Ad} b \sigma)^{-1} \subset (\mathfrak{g}, \operatorname{Ad} b \sigma).
\end{align}
In order to apply the rigidity result of Chai \cite{ChaiRigidity} we need to understand this action. We will do this in more generality in the next section. 

\subsection{Strongly non-trivial actions} \label{Sec:NontrivialActions} Let $G$ be a connected reductive group over $\qp$. Let $b \in G(\qpbr)$ be an element and consider the $F$-isocrystal $(\mathfrak{g}, \operatorname{Ad} b \sigma)$, where $\mathfrak{g}=\operatorname{Lie} G \otimes \qpbr$, equipped with its action of $J_b(\qp)$. If we replace $b$ by a $\sigma$-conjugate $b'$, then $J_b(\qp)$ and $J_{b'}(\qp)$ are conjugate in $G(\qpbr)$ and there is an isomorphism of isocrystals $(\mathfrak{g}, \operatorname{Ad} b \sigma) \simeq (\mathfrak{g}, \operatorname{Ad} b' \sigma)$.

Let $\lambda \in \mathbb{Q}$ and let $N_{\lambda} \subset (\mathfrak{g}, \operatorname{Ad} b \sigma)$ be the largest sub $F$-isocrystal of slope $\lambda$. Then because $J_b(\qp)$ acts on $(\mathfrak{g}, \operatorname{Ad} b \sigma)$ via $F$-isocrystal automorphisms, it preserves the subspace $N_{\lambda}$. Let us also denote by $b$ the image of $b$ in $\operatorname{GL}(\mathfrak{g})$, then there is a homomorphism of algebraic groups
\begin{align}
    J_b \to \operatorname{GL}(\mathfrak{g})_b,
\end{align}
where $\operatorname{GL}(\mathfrak{g})_b$ denotes the $\sigma$-centraliser of $b$ in $\operatorname{GL}(\mathfrak{g})$. There is a parabolic subgroup
\begin{align}
    P(\lambda) \subset \operatorname{GL}(\mathfrak{g})
\end{align}
consisting of automorphisms of $\mathfrak{g}$ that preserve the slope filtration on the $F$-isocrystal $(\mathfrak{g}, \operatorname{Ad} b \sigma)$, and after potentially replacing $b$ by a $\sigma$-conjugate, 
the image of $b$ lands in $P(\lambda)$. There is thus a group homomorphism
\begin{align}
    J_b \to P(\lambda)_b,
\end{align}
where $P(\lambda)_b$ denotes the $\sigma$-centraliser of $b$ in $P(\lambda)$. Since $N_{\lambda}$ is a graded quotient of the slope filtration of the $F$-isocrystal $(\mathfrak{g}, \operatorname{Ad} b \sigma)$, there is an induced quotient map $P(\lambda) \to \operatorname{GL}(N_{\lambda})$ and this induces a group homomorphism
\begin{align}
    J_b \to \operatorname{GL}(N_{\lambda})_b,
\end{align}
where $\operatorname{GL}(N_{\lambda})_b$ denotes the $\sigma$-centraliser of $b$ in $\operatorname{GL}(N_{\lambda})$. Let $E$ be the $\qp$-algebra of endomorphisms of the $F$-isocrystal $N_{\lambda}$ and let $E^{\times}$ be the functor on $\qp$-algebras given by $R \mapsto (R \otimes E)^{\times}$. Then there is a natural isomorphism $E^{\times} \simeq \operatorname{GL}(N_{\lambda})_b$.

Let $\operatorname{GL}(E)$ be the general linear group of $E$ considered as $\qp$-vector space and let $E^{\times} \to \operatorname{GL}(E)$ be the natural map corresponding to the action of $E$ on itself by left translation. Consider $E$ as a $\qp$-linear representation of $J_b$ via $J_b \to E^{\times}$, then the goal of this section is to prove the following result:
\begin{Prop} \label{Prop:NonTrivialII}
Let $T \subset J_b$ be a maximal torus. If $\lambda \not=0$, then the trivial representation of $T$ does not occur in the representation of $T$ given by $E$. 
\end{Prop}
\begin{proof}[Proof of Proposition \ref{Prop:NonTrivialII}]
After replacing $b$ by a $\sigma$-conjugate we can arrange for it to satisfy (c.f. \cite[Sec. 4]{Kottwitz1})
\begin{align}
    b \sigma(b) \cdots \sigma^{r-1}(b) = (r \nu_b)(p)
\end{align}
for some $r$. Here $\nu_b$ is the Newton cocharacter of $b$, which is defined over $\mathbb{Q}_{p^r}$. Let $M_{\nu_b} \subset G \otimes \qpbr$ denote the centraliser of the cocharacter $\nu_b$. By \cite[Prop. 2.2.6]{KimLeaves}, there is a unique isomorphism
\begin{align}
    J_b \otimes \qpbr \to M_{\nu_b}
\end{align}
such the composition $J_b(\qp) \subset J_b(\qpbr) \to M_{\nu_b}(\qpbr) \subset G(\qpbr)$ is the defining inclusion of $J_b(\qp)$ as the $\sigma$-centraliser of $b$ in $G(\qpbr)$. \smallskip

After tensoring up to $\qpbr$, there is a commutative diagram (where $L^{\mathrm{reg}}$ is the left regular representation of $\operatorname{GL}(N_{\lambda})$ on $\operatorname{GL}(\operatorname{End}(N_{\lambda}))$)
\begin{equation}
\begin{tikzcd}
    & \operatorname{GL}(E_{\qpbr}) \arrow[r, "\simeq"] & \operatorname{GL}(\operatorname{End}(N_{\lambda})) \\
    J_{b,\qpbr} \arrow{r} \arrow[d, "\simeq"] & E^{\times}_{\qpbr} \arrow{u} \arrow[r, "\simeq"] & \operatorname{GL}(N_{\lambda}) 
    \arrow{u}{\operatorname{L}_{\mathrm{reg}}} . \\
    M_{\nu_b} \arrow{r}{} & P(\lambda) \arrow{ur}
\end{tikzcd} 
\end{equation}
If we show that the trivial representation of $T \otimes \qpbr$ does not occur in $E \otimes \qpbr$, then it follows that the trivial representation of $T$ does not occur in $E$. The representation $W=\operatorname{End}(N_{\lambda})$ of $J_{b,\qpbr}$ defined by composition with the left regular representation is a direct sum of copies of the representation $N_{\lambda}$. Therefore it suffices to show that the representation $N_{\lambda}$ of $T \otimes \qpbr$ does not contain the trivial representation. \medskip

We note that $T \otimes \qpbr=:T'$ is a maximal torus of $M_{\nu_b}$ acting on the associated graded of the slope filtration of the $F$-isocrystal $(\mathfrak{g}, \operatorname{Ad} b \sigma)$. Since $\nu_b$ is a central cocharacter of $M_{\nu_b}$ by definition, we see that $(r \nu_b)(p) \in T'(\qpbr)$. To determine the slope decomposition of the $F$-isocrystal $(\mathfrak{g}, \operatorname{Ad} b \sigma)$, it suffices to determine the slope decomposition of the $F^r$-isocrystal
\begin{align}
    \left(\mathfrak{g},(\operatorname{Ad} b \sigma)^r\right)
\end{align}
for some positive integer $r$. \medskip

Let $C$ be an algebraic closure of $\qpbr$ and consider the action of $T'_C$ on $\mathfrak{g}_C$ via the adjoint action of $G_C$. Then we have a decomposition
\begin{align}
    \mathfrak{g}_C \simeq \mathfrak{t}'_C \oplus \left(\bigoplus_{\alpha \in \Phi} U_{\alpha} \right),
\end{align}
where $\Phi \subset X^{\ast}(T_C')$ consists of the simple roots of $T_C$. There is a similar decomposition
\begin{align} \label{Eq:DecompositionIntoRoots}
    \mathfrak{g} \simeq \mathfrak{t}' \oplus \left(\bigoplus_{\alpha_0 \in \Phi_0} U_{\alpha_0} \right),
\end{align}
where $\Phi_0 \in X^{\ast}(T_C')_I$ is the image of $\Phi$ and where $I=\operatorname{Gal}(C/\qpbr)$ is the inertia group. \medskip

Now we choose an integer $r$ with the following properties: The isomorphism $J_b \otimes \qpbr \to M_{\nu_b}$ is defined over $\mathbb{Q}_{p^r}$, the equation 
\begin{align}
    b \sigma(b) \cdots \sigma^{r-1}(b) = (r \nu_b)(p)
\end{align}
is satisfied, and the decomposition \eqref{Eq:DecompositionIntoRoots} is defined over $\mathbb{Q}_{p^r}$. Then each $U_{\alpha_0}$ is stable under the action of $\sigma^r$ and $(\operatorname{Ad} b \sigma)^r$ acts on it by $(r \nu_b)(p) \sigma^r$. The operator $\operatorname{Ad} b \sigma$ moreover acts trivially on $\mathfrak{t}'$, and thus for nonzero $\lambda$ we have that
\begin{align}
    N_{\lambda} \subset \bigoplus_{\alpha_0} U_{\alpha_0}.
\end{align}
After basechanging to $C$, we see that
\begin{align}
    N_{\lambda} \subset \bigoplus_{\alpha} U_{\alpha}.
\end{align}
Thus $T'_C$ acts on $N_{\lambda}$ via a subset of the nontrivial characters given by the simple roots $\Phi \subset X^{\ast}(T'_C)$, and therefore the trivial representation of $T'$ does not occur in $N_{\lambda}$ and thus it does not occur in $E$.

\end{proof}

\section{Proof of the main theorems} \label{Sec:ProofOfTheorems}
There are two final ingredients that are introduced in this section. In Section \ref{Sec:Rigidity}, we prove the local stabiliser principle of Chai--Oort (c.f. \cite[Thm. 9.5]{ChaiOortNotes}), which shows that the formal completion of the Zariski closure of a prime-to-$p$ Hecke orbit is stable under the action of a large $p$-adic Lie group. In Section \ref{Sec:Coordinates} we give a summary of results of \cite{ChaiOrdinary}, which relates Serre--Tate coordinates of families of ordinary abelian varieties to the $p$-adic monodromy groups of these abelian varieties. Then in Section \ref{Sec:Conclusion} we put everything together to prove Theorem \ref{Thm:Ordinary}. In Section \ref{Sec:AbelianType} we prove Corollary \ref{Cor:AbelianType}, which is a generalisation of Theorem \ref{Thm:Ordinary} to Shimura varieties of abelian type. \medskip

\renewcommand{\shg}{\operatorname{Sh}_{G,K^pK_p}}
\renewcommand{\shgord}{\operatorname{Sh}_{G,\mathrm{ord},K^pK_p}}\renewcommand{\shgsp}{\operatorname{Sh}_{\operatorname{GSp},\mathcal{K}^p\mathcal{K}_p}} \renewcommand{\shgspord}{\operatorname{Sh}_{\operatorname{GSp},\mathrm{ord},\mathcal{K}^p \mathcal{K}_p}} \renewcommand{\sgsp}{\mathcal{S}_{\operatorname{GSp}},\mathcal{K}^p\mathcal{K}_p} We will use the notation introduced in Section \ref{Sec:IntModels} and Section \ref{Sec:IntegralModelsHyperspecial} and moreover we will keep track of the level again. Let $x \in \shg(\ovfp)$ and let $\tilde{x}$ be a lift of $x$ to $\shginf(\ovfp)$. Then the \emph{prime-to-$p$ Hecke orbit} of $x$ is defined to be the image $H_{K^p}(x) \subset \shg(\ovfp)$ of the orbit $G(\afp) \cdot \tilde{x} \subset \shginf(\ovfp)$; it does not depend on the choice of lift $\tilde{x}$. For the rest of this section we fix $x$ as above and we let $Z \subset \shgord$ be the closure of $H_{K^p}(x)$, then $Z$ is again $G(\afp)$-stable by Lemma \ref{Lem:ClosureStable}.

\subsection{Rigidity of Zariski closures of Hecke orbits} \label{Sec:Rigidity}
Let $z \in Z(\ovfp)$ smooth point of $Z$ and let $I_z(\mathbb{Q})$ be the group of self quasi-isogenies of $z$ respecting the tensors, which was introduced in Section \ref{Sec:Centralisers}. Let $Y=A_z[p^{\infty}]$ and fix a choice of isomorphism $\mathbb{D}_{\mathrm{cov}}(Y) \simeq V_{(p)}^{\ast} \otimes \zpbr$ sending $s_{\alpha} \otimes 1$ to $s_{\alpha,\mathrm{cris}}$ as in Section \ref{Sec:DieudonneModuleSerreTateTorus}. This gives rise to an element $b_z=b \in G(\qpbr)$ and we let $U_p \subset J_b(\qp)$ be the compact open subgroup introduced in Section \ref{Sec:ActionOfAutomorphisms}. Let $I_z(\mathbb{Z}_{(p)})$ be the intersection of $I_z(\mathbb{Q})$ with $U_p$ inside $J_b(\mathbb{Q}_p)$. We consider the closed immersion of formal neighbourhoods (where the notation is as in \eqref{Eq:Inclusion})
\begin{align}
    Z^{/z} \subset \shg^{/z} \subset \shgsp^{/z}.
\end{align}
The goal of this section is to prove the following result.
\begin{Prop}[Local stabiliser principle] \label{Prop:LocalStabiliserPrinciple}
The closed subscheme $Z^{/z} \subset \shg^{/z}$ is stable under the action of $I_z(\mathbb{Z}_{(p)})$ via $I_z(\mathbb{Z}_{(p)}) \to U_p$.
\end{Prop}
\subsubsection{} There is a $G(\afp)$-equivariant closed immersion (using the fact that we have a closed immersion at finite level by the main theorem of \cite{xu2020normalization}) $\shginf \to \shgspinf$, where $G(\afp)$ acts on the right hand side via the inclusion $G(\afp) \to \mathcal{G}_V(\afp)$. The space $\shgspinf$ is a moduli space of polarised abelian varieties $(A, \lambda)$ up to prime-to-$p$ isogeny, equipped with an isomorphism $V^p A \to V \otimes \afp$ compatible with the polarisation. 

Let $\tilde{z}$ be a lift of $z$ to $\shginf(\ovfp)$ as above, which defines an inclusion
\begin{align}
    I_z(\mathbb{Z}_{(p)}) \subset I_z(\mathbb{Q}) \subset G(\afp).
\end{align}
The stabiliser in $\mathcal{G}_V(\afp)$ of $\tilde{z} \in \shgspinf$ is given by $\operatorname{End}_{\lambda}(A_z)^{\times}$, 
which is the group of automorphisms of the abelian variety up to prime-to-$p$ isogeny $A$ that take $\lambda$ to a $\mathbb{Z}_{(p)}^{\times}$ multiple of $\lambda$. 
\begin{Lem} \label{Lem:StabiliserAtInfinity}
The stabiliser inside $G(\afp)$ of the point $\tilde{z}$ is equal to $I_z(\mathbb{Z}_{(p)})$. 
\end{Lem}
\begin{proof}
By \cite[Lem. 2.1.4]{KMPS}, the stabiliser is contained in $I_z(\mathbb{Z}_{(p)})$. The stabiliser in $\mathcal{G}_V(\afp)$ of the image of $\tilde{z}$ in $\shgspinf$ is equal to $\operatorname{End}_{\lambda}(A_z)^{\times}$ and thus contains $I_z(\mathbb{Z}_{(p)})$. The result follows.
\end{proof}
In order to prove Proposition \ref{Prop:LocalStabiliserPrinciple}, we first prove it for $\shgsp$. See \cite[Thm. 9.5]{ChaiOortNotes} for closely related results and arguments. \smallskip

Let $\shgspinf^{/\tilde{z}}$ be the formal completion of $\shgspinf$, considered as a formal algebraic space as in \cite[Sec. 0AIX]{stacks-project}, and restrict its functor of points to Artin local $\ovfp$-algebras $R$ with residue field isomorphic to $\ovfp$. Then $\shgspinf^{/\tilde{z}}(R)$ is the set of isomorphism classes of polarised abelian varieties $(A, \lambda)$ up to prime-to-$p$ isogeny, equipped with an isomorphism $\epsilon:V^p A \to V \otimes \afp$ compatible with the polarisation, such that after basechanging to $\ovfp$ we recover the point given by the image of $\tilde{z}$.

This means that there is a (necessarily unique) isomorphism $\beta:A_{\ovfp} \to A_z$ making the following diagram commute:
\begin{equation} \label{Eq:FormalCompletionShgspinf}
\begin{tikzcd}
V^p A_{\ovfp} \arrow{r}{\beta} \arrow{d}{\epsilon} & V^p A_z \arrow{d}{\epsilon_{\tilde{z}}} \\
V \otimes \afp \arrow[r, equals] & V \otimes \afp.
\end{tikzcd}
\end{equation}
The quadruple $(A, \lambda, \beta,\epsilon)$ is uniquely determined by $(A, \lambda, \beta)$ because (pro-)\'etale sheaves on Artin local rings are determined by their restriction to the residue field. In particular, for all $R \in \art$ the natural forgetful map
\begin{align}
    \shgspinf^{/\tilde{z}}(R) \to \shgsp^{/z}(R)
\end{align}
is an isomorphism. This induces an action of $\operatorname{End}_{\lambda}(A_z)^{\times}$ on $\shgsp^{/z}$ that we will now identify.

\subsubsection{} Recall that there is an inclusion $\operatorname{End}_{\lambda}(A_z)^{\times} \subset \mathcal{G}_V(\afp)$ determined by $\tilde{z}$ or rather $\epsilon_{\tilde{z}}$. This means that an automorphism $f$ of $A_z$ acts on $V \otimes \afp$ in a way that makes the following diagram commute:
\begin{equation}
    \begin{tikzcd}
        V^p A_z \arrow{r}{f} \arrow{d}{\epsilon_{\tilde{z}}} & V^p A_z \arrow{d}{\epsilon_{\tilde{z}}} \\
        V \otimes \afp \arrow{r}{f} & V \otimes \afp.
    \end{tikzcd}
\end{equation}
Since $\operatorname{End}_{\lambda}(A_z)^{\times}$ stabilises $\tilde{z}$, it acts on $\shgspinf^{/\tilde{z}}$. This action can be described as follows: An automorphism $f$ sends a triple $(A, \lambda, \epsilon)$ to $(A, \lambda, f \circ \epsilon)$. It is straightforward to check that the unique upgrade $(A, \lambda, f \circ \epsilon)$ to a quadruple $(A, \lambda, \beta', f \circ \epsilon)$ is realised by taking $\beta'=f \circ \beta$. Therefore the induced action of $\operatorname{End}_{\lambda}(A_z)^{\times}$ on $\shgsp^{/z}$ is given by $(A, \lambda, \beta) \mapsto (A, \lambda, f \circ \beta)$. 

\subsubsection{} \label{Sec:IdentificationOfAction} Because deformations of abelian varieties are uniquely determined by deformations of their $p$-divisible groups, we can also identify 
\begin{align}
    \shgspinf^{/\tilde{z}}(R)
\end{align}
with the space of triples $(X, \lambda, \beta)$ where $(X,\lambda)$ is a polarised $p$-divisible group and $\beta$ is an isomorphism $(X,\lambda)_{\ovfp} \to (A_z[p^{\infty}],\lambda_z)$. The action of  $\operatorname{End}_{\lambda}(A_z)^{\times}$ is then given by postcomposing $\beta$ with $f$. There is a similar description of at finite level, and it follows that the natural map
\begin{align}
    \shgsp^{/z} \subset \operatorname{Def}(A_z[p^{\infty}])
\end{align}
is $\operatorname{End}_{\lambda}(A_z)^{\times}$-equivariant, where $\operatorname{End}_{\lambda}(A_z)^{\times}$ acts on the RHS via the inclusion
\begin{align}
    \operatorname{End}_{\lambda}(A_z)^{\times} \subset \operatorname{End}(A_z[p^{\infty}])^{\times},
\end{align}
followed by the natural action of $\operatorname{End}(A_z[p^{\infty}])^{\times}$ on $\operatorname{Def}(A_z[p^{\infty})]$. 

\begin{proof}[Proof of Proposition \ref{Prop:LocalStabiliserPrinciple}]
Let $\tilde{z}$ be a lift of $z$ to $\shginf(\ovfp)$ as above, which defines an inclusion
\begin{align}
    I_z(\mathbb{Z}_{(p)}) \subset I_z(\mathbb{Q}) \subset G(\afp).
\end{align}
It follows from Lemma \ref{Lem:StabiliserAtInfinity} that $I_z(\mathbb{Z}_{(p)}) \subset G(\afp)$ is the stabiliser of the point $\tilde{z}$ under the action of $G(\afp)$. Let $\tilde{Z}$ be the inverse image of $Z$ in $\shginf$, it is stable under the action of $G(\afp)$ by Lemma \ref{Lem:ClosureStable}. There is a commutative diagram
\begin{equation}
    \begin{tikzcd}
        \tilde{Z} \arrow{d} \arrow{r} & \shginf \arrow{r} \arrow{d} & \shgspinf \arrow{d} \\
        Z \arrow{r} & \shg \arrow{r} & \shgsp.
    \end{tikzcd}
\end{equation}
where the top right horizontal map is $G(\afp)$-equivariant via $G(\afp) \to \mathcal{G}_V(\afp)$. 

Let $\tilde{Z}^{/\tilde{z}}$ be the formal completion of $\tilde{Z}$ at the closed point corresponding to $\tilde{Z}$, considered as a formal algebraic space as in \cite[Sec. 0AIX]{stacks-project}. This is per definition the subfunctor of $\tilde{Z}$ consisting of those morphisms $\spec T \to \tilde{Z}$ that factor through $\tilde{z}$ on the level of topological spaces. Since $I_z(\mathbb{Z}_{(p)})$ stabilises $\tilde{z}$, it acts on $\tilde{Z}^{/\tilde{z}}$.

By \cite[Lem. 0CUF]{stacks-project}, there is a homeomorphism $|\tilde{Z}| \simeq \varprojlim_{K^p} |Z_{K^p}|$ and thus we get an isomorphism
\begin{align}
    \tilde{Z}^{/\tilde{z}} \simeq \varprojlim_{K^p \subset G(\afp)} Z_{K^p}^{/z},
\end{align}
where $z \in Z_{K^p}(\ovfp)$ is the image of $\tilde{z}$ under $\tilde{Z} \to Z_{K^p}$. The formal algebraic space $Z_{K^p}^{/z}$ can be identified with $\spf \widehat{\mathcal{O}}_{Z_{K^p},z}$, compatible with changing $K^p$. Since the transition morphisms are all finite \'etale, they induce isomorphisms of complete local rings. Therefore, all the transition maps in the inverse system $\varprojlim_{K^p \subset G(\afp)} Z_{K^p}^{/z}$ are isomorphism. We conclude that
\begin{align}
    \tilde{Z}^{/\tilde{z}} \simeq Z_{K^p}^{/z},
\end{align}
and so there is an action of $I_z(\mathbb{Z}_{(p)})$ on $Z_{K^p}^{/z}$ In the same way we can prove that there is an action of $I_z(\mathbb{Z}_{(p)})$ on $\shg^{/z}$. It remains for us to identify this action with the inclusion $I_z(\mathbb{Z}_{(p)}) \to U_p$ followed by the natural action of $U_p$ on $\shg^{/z}$.

Let $\tilde{z}$ be the image of $\tilde{z}$ in $\shgspinf(\ovfp)$ and let $z \in \shgsp(\ovfp)$ its image. Then the stabiliser of $\tilde{z}$ can be identified with the group
\begin{align}
    \operatorname{End}_{\lambda}(A_z)^{\times} \subset \mathcal{G}_V(\afp)
\end{align}
as before. The discussion above implies that we have an action of $\operatorname{End}_{\lambda}(A_z)^{\times}$ on $\shgsp^{/z}$ such that the closed immersion
\begin{align}
    \shg^{/z} \subset \shgsp^{/z}
\end{align}
is $I_z(\mathbb{Z}_{(p)})$-equivariant for the action of $I_z(\mathbb{Z}_{(p)})$ on the right hand side via the map $I_z(\mathbb{Z}_{(p)}) \to \operatorname{End}_{\lambda}(A_z)^{\times}$. But we have seen in Section \ref{Sec:IdentificationOfAction} that the action of $\operatorname{End}_{\lambda}(A_z)^{\times}$ on $\shgsp^{/z}$ described above agrees with the action of $\operatorname{End}_{\lambda}(A_z)^{\times}$ via the inclusion $\operatorname{End}_{\lambda}(A_z)^{\times} \to \mathbf{Aut}_{\lambda}(A_z[p^{\infty}])(\ovfp)$. 

Note that the following diagram commutes by construction
\begin{equation}
\begin{tikzcd}
    I_z(\mathbb{Z}_{(p)})  \arrow{r} \arrow{d} & \operatorname{End}_{\lambda}(A_z)^{\times} \arrow{d} \\
    U_p \arrow{r} & \mathbf{Aut}_{\lambda}(A_z[p^{\infty}])(\ovfp).
\end{tikzcd} 
\end{equation}
Thus we see that $Z_{K^p}^{/z}$ is stable under the action of $I_z(\mathbb{Z}_{(p)})$ on $\shg^{/x}$ given by the inclusion $I_z(\mathbb{Z}_{(p)}) \to U_p$ followed by the natural action of $U_p$ on $\shg^{/x}$.
\end{proof}
\begin{Cor} \label{Cor:Rigidity}
Assume that $z \in \shg(\ovfp)$ is an ordinary point. Then $Z^{/z}$ is a formal subtorus of $\shg^{/z}$. 
\end{Cor}
\begin{proof}
We know that $U_p$ acts on the deformation space $\shg^{/z}$ and we see that $Z^{/z}$ is stable under the action of $I_z(\mathbb{Z}_{(p)})$ and hence of its closure in $U_p$. The algebraic group $I_{\mathbb{Q}_p} \subset J_b$ has the same rank as $J_b$ by \cite[Cor. 2.1.7]{KisinPoints}. Let $T \subset I_{\mathbb{Q}}$ be a maximal torus, then \cite[Thm. 7.9]{PR} tells us that the topological closure of $T(\mathbb{Q})$ in $T(\qp)$ has finite index in $T(\qp)$. It follows from this that the closure of $I_z(\mathbb{Z}_{(p)})$ in $U_p$ contains a compact open subgroup of a maximal torus $T$ of $J_b(\mathbb{Q}_p)$. 

Proposition \ref{Prop:NonTrivialII} then tells us that the assumptions of \cite[Thm. 4.3]{ChaiRigidity} are satisfied. This theorem implies that $Z^{/z}$ is a $p$-divisible formal subgroup of $\shg^{/z}$, in other words, it is a formal subtorus. 
\end{proof}
\subsection{Monodromy of linear subspaces} The goal of this section is to prove the following result, which is a consequence of results of \cite{ChaiOrdinary}. Recall that the universal abelian variety $A$ over $\shgspord$ gives rise to an $F$-isocrystal $\mathcal{M}$, see Section \ref{Sec:PAdicMonodromy}. Let $Z \subset \shgspord$ be a closed subscheme, then we say that $Z$ is \emph{linear} at a smooth point $z \in Z(\ovfp)$ if $$Z^{/z} \subset \shgsp^{/z}$$ is a $p$-divisible formal subgroup. Let $U_Z$ be the unipotent radical of the monodromy group $\operatorname{Mon}(Z, \mathcal{M},z)$.
\begin{Prop}[Chai] \label{Prop:MonodromyLinear}
Let $z \in Z(\ovfp)$ be a smooth point such that $Z$ is linear at $z$. Then we have the inequality
\begin{align}
    \operatorname{dim} Z_{z} \ge \operatorname{Dim} U_Z,
\end{align}
where $\operatorname{Dim} Z_z$ is the dimension of the local ring $\mathcal{O}_{Z,x}$.
\end{Prop}
Chai proves the stronger statement that this inequality is actually an equality, but we will not need this stronger statement to prove Theorem \ref{Thm:Ordinary}. \smallskip

Our proof of Proposition \ref{Prop:MonodromyLinear} is a straightforward application of the results in \cite[Sec. 2-4]{ChaiOrdinary}. Since \cite{ChaiOrdinary} is an unpublished preprint from 2003, the referee has suggested we include another reference. Thus we give a second proof of Proposition \ref{Prop:MonodromyLinear} based on results of \cite{DAddeziovanHoften}. 

\subsubsection{} \label{Sec:Coordinates} For our first proof of Proposition \ref{Prop:MonodromyLinear}, we need to give a brief summary of \cite[Sec. 2-4]{ChaiOrdinary}. Consider the closed immersion. 
\begin{align}
    Z^{/z} \to \shgsp^{/i(z)} \xhookrightarrow{} \widehat{\operatorname{Def}}(Y)_{\ovfp},
\end{align}
Write $R=\widehat{\mathcal{O}_{Z,z}}$ and write $M$ for the finite free $\zp$-module $T_p Y_0(\ovfp) \otimes_{\zp} T_p Y_1^{\vee}(\ovfp)$. Then the morphism $Z^{/z} \to \widehat{\operatorname{Def}}(Y)$ corresponds to an element of
\begin{align}
    \widehat{\operatorname{Def}}(Y)(R) &= \operatorname{Hom}(M, \widehat{\mathbb{G}}_m(R))\\
    &=\operatorname{Hom}(M, 1+\mathfrak{m}_R)
\end{align}
where the first equality is \cite[Thm. 2.1]{KatzSerreTate}. Thus we get a homomorphism $f:M \to 1+\mathfrak{m}_R$ and we let $N_z^{\vee}$ be its kernel. By \cite[Prop. 4.2.1, Rem. 2.5.1]{ChaiOrdinary}, the $\zp$-module $N_z^{\vee}$ is finite free and the quotient $M/N_z^{\vee}$
is torsion-free. Thus the map
\begin{align}
    Z^{/z} \to \Hom(M, \widehat{\mathbb{G}}_m)
\end{align}
factors through the subtorus 
\begin{align}
    \Hom \left( M/N_z^{\vee} , \widehat{\mathbb{G}}_m \right) \subset \Hom(M, \widehat{\mathbb{G}}_m),
\end{align}
which we can also write as $N_z \otimes_{\zp} \widehat{\mathbb{G}}_m \subset M^{\ast} \otimes_{\zp} \widehat{\mathbb{G}}_m$. Here the $\ast$ denotes taking $\zp$-linear dual and the morphism $N_z \to M^{\ast}$ is the $\zp$-linear dual of the quotient
\begin{align}
    M \to M/N_z^{\vee}.
\end{align}
The following lemma has the same statement as \cite[Rem. 3.14]{ChaiOrdinary}.
\begin{Lem} \label{Lem:SmallestSubtorus}
The subgroup $N_z \otimes_{\zp} \widehat{\mathbb{G}}_m$ is the smallest formal subtorus of $\widehat{\operatorname{Def}}(Y)_{\ovfp}$ through which the map from $\spf R$ factors. 
\end{Lem}
\begin{proof}
A subtorus corresponds to a free $\zp$-submodule $N \subset N_z$ such that the quotient $N_z/N$ is torsion free. Write $N^{\vee}$ for the kernel of the map $$M \to M/N_z^{\vee} = N_z^{\ast} \to N^{\ast}.$$ Then if
 \begin{align}
    \spf R \to N_z \otimes_{\zp} \widehat{\mathbb{G}}_m
\end{align}
factors through $N \otimes_{\zp} \widehat{\mathbb{G}}_m$, it factors through
\begin{align}
    \Hom \left( M/N^{\vee}, \widehat{\mathbb{G}}_m \right) \subset \Hom(M, \widehat{\mathbb{G}}_m).
\end{align}
Since the kernel of the map $M \to \widehat{\mathbb{G}}_m(R)$ is given by $N_z^{\vee}$, it follows that $N_z^{\vee}=N^{\vee}$ and therefore $N=N_z$.
\end{proof}

\begin{proof}[Proof of Proposition \ref{Prop:MonodromyLinear}]
We specialise the discussion of Section \ref{Sec:Coordinates} to the situation of Proposition \ref{Prop:MonodromyLinear}. In particular, since $Z^{/z}$ is assumed to be a formal subtorus, we are in the situation that 
\begin{align}
    Z^{/z}= N_z \otimes_{\zp} \widehat{\mathbb{G}}_m \subset \widehat{\operatorname{Def}}(Y)_{\ovfp}.
\end{align}
Chai proves in \cite[Sec. 4, Thm. 4.4]{ChaiOrdinary} that the dimension of the $U_Z$ is equal to the rank of $N_z$. Thus the rank of $N_z$ is certainly bounded from below by the dimension of $U_Z$. But the rank of $N_z$ is also the dimension of the formal scheme $Z^{/z}$ which equals the Krull dimension of $\widehat{\mathcal{O}}_{Z,z}$ and also the Krull dimension of $\mathcal{O}_{Z,z}$, which proves the theorem.
\end{proof}

\begin{proof}[Second proof of Proposition \ref{Prop:MonodromyLinear}]
By \cite[Theorem II]{DAddeziovanHoften}, the unipotent radical $U_Z$ of $\operatorname{Mon}(Z,\mathcal{M},z)$ is isomorphic to the monodromy group
\begin{align}
    \operatorname{Mon}(Z^{/z},\mathcal{M},z).
\end{align}
This monodromy group is defined as in Section \ref{Sec:PAdicMonodromy} using the Tannakian category of isocrystals over the formal scheme $Z^{/z}$ or equivalently the Tannakian category of isocrystals over the scheme $\spec \widehat{\mathcal{O}}_{Z,z}$, see \cite[Notation 2.2.5]{DAddeziovanHoften}). Thus it suffices to show that the dimension of $Z^{/z}$ is bounded from below by the dimension of $\operatorname{Mon}(Z^{/z},z)$. \medskip

Let $Y=A_z[p^{\infty}]$ as above and write $\mathfrak{a}^+=:\mathbb{D}_{\mathrm{cov}}(Y)$ and $\mathfrak{a}=\mathfrak{a}^+[1/p]$. Write $\mathfrak{b}^+ \subset \mathfrak{a}^+$ for the covariant Dieudonn\'e module of the $p$-divisible group associated to $Z^{/z}$ and $\mathfrak{b}=\mathfrak{b}^+[1/p]$. Then in the notation of \cite[Sec. 5.5]{DAddeziovanHoften} we have
\begin{align}
    Z^{/z}=Z(\mathfrak{b}^+).
\end{align}
Now \cite[Thm. 5.5.3]{DAddeziovanHoften} tells us that there is an inclusion of algebraic groups over $\qpbr$
\begin{align}
    \operatorname{Mon}(Z^{/z},\mathcal{M} ,z) \subset U(\mathfrak{b}):=\mathfrak{b} \otimes_{\qpbr} \mathbb{G}_a.
\end{align}
in particular, the height of the isocrystal $\mathfrak{b}$ is bounded from below by the dimension of $\operatorname{Mon}(Z^{/z},\mathcal{M},z)$. Since $\mathfrak{b}$ has slope $1$, it follows that the dimension of the $p$-divisible group associated to $\mathfrak{b}^+$ is also bounded from below by the dimension of $\operatorname{Mon}(Z^{/z},\mathcal{M},z)$.
\end{proof}

\subsection{Monodromy and conclusion} \label{Sec:Conclusion}
Recall from Section \ref{Sec:ProductDecompositionKottwitzSet} the maps
\begin{align}
    B(G_{\qp}) \to B(G^{\mathrm{ad}}_{\qp}) \to \prod_{i=1}^n B(G_{i,\qp}).
\end{align}
induced by the decomposition $G^{\mathrm{ad}}=\prod_{i=1}^n G_i$ of \eqref{eq:ProductDecomposition}. Let $[b_{\mathrm{ord}}] \in \bgmu$ be the $\sigma$-conjugacy class corresponding to the ordinary locus, and let $[b_{\mathrm{ord},i}]$ be the image of $[b_{\mathrm{ord}}]$ in $B(G_{i,\qp})$.
\begin{Lem} \label{Lem:NonBasic}
    For all $i$ the element $[b_{\mathrm{ord},i}]$ is non-basic.
\end{Lem}
\begin{proof}
    By the axioms of a Shimura datum, the $G_i(\qpbar)$-conjugacy class of cocharacters $\{\mu_i^{-1}\}$ induced by $\{\mu^{-1}\}$ is nontrivial for all $i$. By Lemma \ref{Lem:NewtonCocharacter}, we have an equality $\{\mu_i^{-1}\}=\{\nu_{[b_{\mathrm{ord},i}]}\}$ and so the Newton cocharacter of $[b_{\mathrm{ord},i}]$ is noncentral for all $i$. In other words, the $\sigma$-conjugacy class $[b_{\mathrm{ord},i}]$ is non-basic for all $i$.
\end{proof}

\begin{proof}[Proof of Theorem \ref{Thm:Ordinary}]
Let $x \in \shg(\ovfp)$ be an ordinary point and let $Z$ be the Zariski closure (inside $\shgord$) of its prime-to-$p$ Hecke orbit. Then $Z$ is $G(\afp)$-stable by Lemma \ref{Lem:ClosureStable} and similarly its smooth locus $Z^{\mathrm{sm}} \subset Z$ is $G(\afp)$-stable by Lemma \ref{Lem:DevSmooth}. Let $X$ be the $p$-divisible group over $Z^{\mathrm{sm}}$ of the universal abelian variety and let $\mathcal{M}^{\dagger}$ be the associated overconvergent $F$-isocrystal, see Section \ref{Sec:PAdicMonodromy}.

By Lemma \ref{Lem:NonBasic}, the element $[b_{\mathrm{ord}}]$ is $\mathbb{Q}$-non-basic and by Lemma \ref{Lem:HypothesisHyperspecial}, we know that Hypothesis \ref{Hyp:Levi} is satisfied because $K_p$ is hyperspecial. Therefore Corollary \ref{Cor:padicMonodromyI} tells us that the monodromy group of $\mathcal{M}^{\dagger}$ over $Z^{\mathrm{sm}}$ is isomorphic to $G^{\mathrm{der}} \otimes \qpbr$. Corollary \ref{Cor:padicMonodromyII} tells us that unipotent radical of the monodromy group of $\mathcal{M}$ over $Z^{\mathrm{sm}}$ is isomorphic to the unipotent radical of the parabolic subgroup $P_{\nu_{[b]}} \subset G \otimes \qpbr$ for any choice of $\nu_{[b]} \in \{\nu_{[b]}\}$. \medskip

By Lemma \ref{Lem:NewtonCocharacter}, this unipotent radical is isomorphic to the unipotent radical of the parabolic subgroup $P_{\mu} \subset G$ for any choice of representative $\mu$ of $\{\mu\}$. This unipotent radical has dimension equal to $\langle 2 \rho, \{\mu\} \rangle$ (this notation was introduced after the statement of Lemma \ref{Lem:NewtonCocharacter}).

Corollary \ref{Cor:Rigidity} tells us that $Z^{/z}$ is a formal subtorus.
Applying Proposition \ref{Prop:MonodromyLinear} we see that the Krull dimension of $\mathcal{O}_{Z,z}$ is bounded from below by $\langle 2 \rho, \{\mu\} \rangle$. Since the Shimura variety $\shg$ also has dimension $\langle 2 \rho, \{\mu\} \rangle$, we conclude that $$Z^{/z}=\shg^{/z}.$$ Because this is true for a dense set of points, it follows that $Z$ is a union of connected components of $\shgord$.

By Lemma \ref{Lem:OrdinaryLocusNewtonStratum}, the ordinary locus is dense and thus $\pi_0(\shgord)=\pi_0(\shg)$. Since $G(\afp)$ acts transitively on $\pi_0(\shginf)$, by \cite[Lem. 2.2.5]{KisinModels} in combination with \cite[Cor. 4.1.11]{MP}, it follows that $Z=\shgord$. We conclude that the prime-to-$p$ Hecke orbit of $x$ is dense in $\shg$ since $\shgord$ is dense in $\shg$.
\end{proof}
\subsection{Consequences for Shimura varieties of abelian type} \label{Sec:AbelianType}
Let $(G,X)$ be a Shimura datum of abelian type with reflex field $E$, and let $(G^{\mathrm{ad}}, X^{\mathrm{ad}})$ be the induced adjoint Shimura datum with reflex field $E^{\mathrm{ad}} \subset E$. Let $p$ be a prime number, let $K_p \subset G(\qp)$ be a hyperspecial subgroup and let $K^p \subset G(\afp)$ be a sufficiently small compact open subgroup. Let $\operatorname{Sh}_{G,K^pK_p}$ be the special fiber of the canonical integral model of the Shimura variety of level $K^pK_p$ at a prime $v$ above $p$ of $E$, constructed by Kisin in \cite{KisinModels} (see \cite{MadapusiPeraKim} for the case $p=2$). \medskip

By \cite[Thm. A]{ShenZhang}, there is an open and dense $G(\afp)$-stable Newton stratum $\operatorname{Sh}_{G,K^pK_p,\mu\mathrm{-ord}}$ in $\operatorname{Sh}_{G,K^pK_p}$, called the \emph{$\mu$-ordinary locus.} If $(G,X) \subset (\mathcal{G}_V, \mathcal{H}_V)$ for some symplectic space $V$ and $E_v=\mathbb{Q}_p$, then the $\mu$-ordinary locus is equal to the ordinary locus by Lemma \ref{Lem:OrdinaryLocusNewtonStratum}.
\begin{Cor} \label{Cor:AbelianType}
If $E^{\mathrm{ad}}_v=\qp$, then the prime-to-$p$ Hecke orbit of $x \in \operatorname{Sh}_{G,K^pK_p,\mu\mathrm{-ord}}(\ovfp)$ is dense in $\operatorname{Sh}_{G,K^pK_p}$.
\end{Cor}
\begin{Lem} \label{Lem:ReductionToAdjoint}
Corollary \ref{Cor:AbelianType} holds for $(G,X)$ if and only if it holds for $(G^{\mathrm{ad}}, X^{\mathrm{ad}})$.
\end{Lem}
\begin{proof}
The image $K_{p}^{\mathrm{ad}}$ in $G^{\mathrm{ad}}(\qp)$ is a hyperspecial subgroup. We can choose $K^{p,\mathrm{ad}} \subset G^{\mathrm{ad}}(\afp)$ containing the image of $K^p$ such that there is a morphism
\begin{align}
    \mathbf{Sh}_{G,K^pK_p}(G,X) \to \mathbf{Sh}_{G^{\mathrm{ad}},K^{p,\mathrm{ad}}K_p^{\mathrm{ad}}}(G^{\mathrm{ad}}, X^{\mathrm{ad}}) \otimes_{E^{\mathrm{ad}}} E,
\end{align}
inducing a morphism on special fibers of integral canonical models
\begin{align} \label{Eq:InducedMapAdjointI}
    \operatorname{Sh}_{G,K^pK_p} \to \operatorname{Sh}_{G^{\mathrm{ad}},K^{p,\mathrm{ad}}K_p^{\mathrm{ad}}},
\end{align}
where we are taking the canonical integral model of $(G^{\mathrm{ad}}, X^{\mathrm{ad}})$ at the place $v^{\mathrm{ad}}$ of $E^{\mathrm{ad}}$ induced by $v$. This morphism induces a $G(\afp)$-equivariant morphism
\begin{align}
    \operatorname{Sh}_{G,K_p} \to \operatorname{Sh}_{G^{\mathrm{ad}},K_p^{\mathrm{ad}}},
\end{align}
where $G(\afp)$ acts on the left hand side via the natural map $G(\afp) \to G^{\mathrm{ad}}(\afp)$. Since the Newton stratification on Shimura varieties of abelian type can be constructed using the $F$-crystals with $G$-structure of Lovering \cite{Lovering}, which are functorial for morphisms of Shimura data, it follows that there is an induced map
\begin{align} \label{Eq:InducedMapAdjointII}
    \operatorname{Sh}_{G,K^pK_p,\mu\mathrm{-ord}} \to \operatorname{Sh}_{G^{\mathrm{ad}},K^{p,\mathrm{ad}}K_p^{\mathrm{ad}},\mu\mathrm{-ord}}.
\end{align}
By construction of the integral canonical models of Shimura varieties of abelian type, see \cite[Sec. 3.4.9]{KisinModels} and \cite[Appendix E.7]{KisinPoints}, the connected components of $\operatorname{Sh}_{G^{\mathrm{ad}},K^{p,\mathrm{ad}}K_p^{\mathrm{ad}}}$ are quotients of connected components of $\operatorname{Sh}_{G,K^pK_p}$ by free actions of finite groups. In particular, the map \eqref{Eq:InducedMapAdjointI} is finite \'etale and thus closed. \smallskip

Because the map \eqref{Eq:InducedMapAdjointI} is closed and takes prime-to-$p$ Hecke orbits to prime-to-$p$ Hecke orbits, it must takes Zariski closures of prime-to-$p$ Hecke orbits to Zariski closure of prime-to-$p$ Hecke orbits. We see that the dimensions of Zariski closures of Hecke orbits of points in the $\mu$-ordinary loci are the same for $(G,X)$ and $(G^{\mathrm{ad}},X^{\mathrm{ad}})$.
Moreover in both cases the prime-to-$p$ Hecke operators act transitively on $\pi_0(\shg)$ by \cite[Lem. 2.2.5]{KisinModels} in combination with \cite[Cor. 4.1.11]{MP}.\footnote{The result \cite[Cor. 4.1.11]{MP} states that for Shimura varieties of Hodge type, the geometric special fibre and the geometric generic fibre of the canonical integral model have the same number of connected components. Since the canonical integral models of Shimura varieties of abelian type are constructed from the canonical integral models of Shimura varieties of Hodge type, the statement therefore also holds for Shimura varieties of abelian type.} Thus prime-to-$p$ Hecke orbits in $\operatorname{Sh}_{G,K^pK_p,\mu\mathrm{-ord}}$ are dense if and only if their images under \eqref{Eq:InducedMapAdjointI} are dense. In particular, if the corollary holds for $(G^{\mathrm{ad}},X^{\mathrm{ad}})$, then it holds for $(G,X)$.

To prove the converse, we note that a point in the Shimura variety for $(G^{\mathrm{ad}},X^{\mathrm{ad}})$ can, by \cite[Lem. 2.2.5]{KisinModels} in combination with \cite[Cor. 4.1.11]{MP}, be moved to a connected component which is in the image of \eqref{Eq:InducedMapAdjointI}. Therefore every prime-to-$p$ Hecke orbit can be lifted to the Shimura variety for $(G,X)$, and we are done. 
\end{proof}
\begin{proof}[Proof of Corollary \ref{Cor:AbelianType}]
By Lemma \ref{Lem:ReductionToAdjoint}, we may assume that $G$ is adjoint. Then by the proof of \cite[Lem. 4.6.22]{KisinPappas} we can choose a Shimura datum of Hodge type $(G_2,X_2)$ and a morphism of Shimura data $(G_2,X_2) \to (G,X)$ such that: The group $G_{2,\qp}$ is quasi-split and split over an unramified extension and the prime $v$ of $E$ splits in the reflex field $E_2 \supset E$ of $(G_2,X_2)$. The upshot is that we can choose a prime $w$ of the $E_2$ satisfying $E_{2,w}=\qp$ and thus the $\mu$-ordinary locus in the special fiber of the canonical integral model for $(G_2,X_2)$ at this prime is equal to the ordinary locus for a choice of Hodge embedding $(\mathcal{G}_V, \mathcal{H}_V)$.

Then Theorem \ref{Thm:Ordinary} implies that Corollary \ref{Cor:AbelianType} holds for $(G_2,X_2)$ and Lemma \ref{Lem:ReductionToAdjoint} tells us that it also holds for $(G,X)$ which concludes the proof.
\end{proof}

\DeclareRobustCommand{\VAN}[3]{#3}
\bibliographystyle{amsalpha}
\bibliography{references}
\end{document}